\documentclass{amsart}
\usepackage{amsmath,amssymb,amsfonts,amsthm}
\usepackage{color}

\usepackage{graphicx}
\allowdisplaybreaks

\theoremstyle{plain}
\newtheorem{theorem}{Theorem}[section]
\newtheorem{proposition}[theorem]{Proposition}
\newtheorem{lemma}[theorem]{Lemma}
\newtheorem{corollary}[theorem]{Corollary}
\newtheorem{question}[theorem]{Question}
\newtheorem{definition}[theorem]{Definition}
\newtheorem{claim}[theorem]{Claim}
\newtheorem{remark}[theorem]{Remark}

\theoremstyle{definition}
\theoremstyle{remark}

\begin{document}

\title
[On strong spaceability of continuous functions]{On strong spaceability of continuous functions and fractal dimensions}
\author{Jia Liu${^1}$  Saisai Shi${^2}$ \and Zhenliang Zhang$^3$}
\address{$^{1}$ Institute of Statistics and Applied Mathematics, Anhui University of Finance and Economics, 233030, Bengbu, P. R. China}
\email{liujia860319@163.com}
\address{${^2}$
Institute of Statistics and Applied Mathematics, Anhui University of Finance and Economics, 233030, Bengbu, P. R. China}
\email{saisai\_shi@126.com}
\address{${^3}$
School of Mathematical Sciences,
Chongqing Normal University, 401331, P. R. China}
\email{zhliang\_zhang@163.com}

\thanks{This work was supported by NSFC No. 12501109  and Scientific Research Project of Colleges and Universities in Anhui Province (2023AH050264).\\}
\date{}
\begin{center}
\end{center}
\begin{abstract}
Given $s\in(1,2]$,  define
$$H_s[0,1]=\{f\in C[0,1]:{\dim}_HG_f([0,1])=s\}$$
and
$$\overline{B}_s[0,1]=\{f\in C[0,1]:\overline{{\dim}}_BG_f([0,1])=s\}.$$
The main goal of this paper is to study the $(\alpha,\beta)$-lineability/spaceability of the sets $H_s[0,1]$ and $\overline{B}_s[0,1]$. As a principal result, we prove that $H_s[0,1]$ is $(p,\mathfrak{c})$-spaceable for $p=1,2$ and $(n,n+m)$-lineable for any $m,n\in\mathbb{N}$. This partially answers a question raised by Liu et al. concerning the Hausdorff dimension of graphs of continuous functions.

Furthermore, for a cardinal number $\alpha$, we prove that $\overline{B}_s[0,1]$ is $(\alpha,\mathfrak{c})$-spaceable if and only if $\alpha<\aleph_0$. This completely resolves an open question raised by Liu et al. concerning the upper box dimension of graphs of continuous functions.

\medskip

\noindent{\bf Keywords.} {Lineability, Spaceability,  Hausdorff dimension, Upper box dimension, Graph of continuous functions}
\smallskip

\noindent{\bf 2020 Mathematics Subject Classification.}  26A16; 28A78; 15A03; 46B87

\end{abstract}

\maketitle

\section{Introduction}

Let us recall the relevant notations and symbols.  As usual, $\mathbb{N}$ denotes the set of all positive integers and $\mathbb{K}$ represents the real scalar field $\mathbb{R}$ or the complex scalar field $\mathbb{C}$. Let $\aleph_0$ and $\mathfrak{c}$ denote the cardinalities of $\mathbb{N}$ and $\mathbb{R}$, respectively. We endow the space $C[0,1]$ of real-valued continuous functions $[0,1] \to \mathbb{R}$ with the maximum norm. Observe that $C[0,1]$ is an infinite dimensional separable Banach space, so its dimension is exactly $\mathfrak{c}$. Given a function $f: X \subseteq \mathbb{R} \to \mathbb{R}$, we denote its graph by $G_f(X)$, i.e.,
\[
G_f(X) = \{(x, f(x)) : x \in X\} \subseteq X \times \mathbb{R}.
\]

\medskip
In the present paper, we investigate the algebraic genericity of continuous functions in terms of the Hausdorff dimension and upper box dimension. The concepts of lineability and spaceability were originally coined by V. I. Gurariy and they first appeared in \cite{ AGS,GQ,Sea}, whose original goal was to find large algebraic structures within sets of special objects. In \cite{FPT}, the stronger notions $(\alpha,\beta)$-lineability and $(\alpha,\beta)$-spaceability were introduced in an attempt to investigate how far positive results for lineability and spaceability remain valid under stronger hypotheses. Let us recall some specific definitions.

Let $X$ be a vector space and let $\alpha$, $\beta$ be two cardinal numbers with $\alpha<\beta$. We say that a subset $A\subseteq X$ is
\begin{itemize}
  \item lineable if there is an infinite dimensional subspace $M$ such that $M\subseteq A\cup\{0\}$.
  \item $\alpha$-lineable if there is an $\alpha$-dimensional subspace $M$ such that $M\subseteq A\cup\{0\}$.
  \item maximal-lineable if it is ${\dim}(X)$-lineable.
    \item $(\alpha,\beta)$-lineable if it is $\alpha$-lineable and for each $\alpha$-dimensional subspace $W_\alpha \subseteq A \cup \{0\}$ there is a $\beta$-dimensional subspace $W_\beta$ such that

   \begin{equation}\label{eqdef}
    W_\alpha \subseteq W_\beta \subseteq A \cup \{0\}.
   \end{equation}
\end{itemize}

\medskip
If, in addition, $X$ is a topological vector space, then the subset $A$ is said to be

\begin{itemize}
  \item  spaceable  whenever there is a closed infinite dimensional subspace $M$
      such that $M\subseteq A\cup\{0\}$.
  \item dense-lineable  whenever there is an infinite dimensional dense  subspace $M$ such that $M\subseteq A\cup\{0\}$.
  \item $\alpha$-dense-lineable whenever there is an $\alpha$-dimensional dense subspace $M$ such that $M\subseteq A\cup\{0\}$.
  \item  maximal-dense-lineable whenever there is a ${\dim}(X)$-dimensional dense subspace $M$ such that $M\subseteq A\cup\{0\}$.
  \item $(\alpha,\beta)$-spaceable if the subspace $W_\beta$ satisfying (\ref{eqdef}) can always be chosen closed.
  \item $(\alpha,\beta)$-dense-lineable if it is $\alpha$-lineable and for each $\alpha$-dimensional subspace $W_\alpha \subseteq A \cup \{0\}$ there is a $\beta$-dimensional dense subspace $W_\beta$ such that
    \[
        W_\alpha \subseteq W_\beta \subseteq A \cup \{0\}.
    \]
\end{itemize}

And, the term algebrability is defined in a similar approach, which was introduced in \cite{AS}. If $X$ is a vector space contained in some (linear) algebra, then $A$ is called:
\begin{itemize}
  \item   algebrable if there is a subalgebra $C\subseteq X$ so that $C\subseteq A\cup\{0\}$ and $C$
has an infinite minimal system of generators (here, by $S=\{z_{s}\}$ is a minimal set of generators of $C$, we mean that $C = \mathcal{A}(S)$ is the algebra generated by $S$, and for every $s_0, z_{s_0}\notin\mathcal{A}(S\setminus \{z_{s_0} \})$.
 \item dense-algebrable if, in addition, $C$ can be taken dense in $X$.
\end{itemize}

\medskip
Recently, the notions of pointwise $\alpha$-lineability (respectively, spaceability) in \cite{PellRa} were introduced as follows:
\begin{itemize}
    \item a subset $A$ of a vector space $X$ is called pointwise $\alpha$-lineable if, for each $x \in A$, there is an $\alpha$-dimensional subspace $W_x$ such that
\begin{equation}\label{eqdef2}
        x \in W_x \subseteq A \cup \{0\}.
\end{equation}

  \item if $X$ is a topological vector space and the subspace $W_x$ satisfying (\ref{eqdef2}) can always be chosen closed (dense), we say that $A$ is respectively pointwise $\alpha$-spaceable (pointwise $\alpha$-dense-lineable). If $\alpha = \dim(X)$, we say that $A$ is respectively maximal pointwise spaceable (maximal pointwise dense-lineable).
 \end{itemize}

 It is evident that $(\alpha,\beta)$-lineability (resp. spaceability) implies 
$\beta$-lineability (resp. spaceability) and that pointwise $\beta$-lineability (resp. spaceability) implies  $(1,\beta)$-lineability (resp. spaceability). In general, the converse is not true (see Example 2.2 in \cite{PellRa}).
\smallskip

Since the concepts of lineability and spaceability emerged in the 2000s, the research on linear structures within exotic settings has been attracting extensive attention from numerous scholars
and these two properties bear vital practical values in a wide range of disciplines including
Real and Complex Analysis \cite{Alb,APG,AGS,GC,EGS,Sea}, Series Theory \cite{AES,Bay}, Operator Theory \cite{BDP,BF,PT}, Classical Banach Sequence Space Theory \cite{BBFP,BDFP,GC,CarS}, Measure and Probability Theory \cite{BFG,CFMS,FST,MPPS}, Set Theory \cite{GS}, ODE \cite{BBFP} as well as Fractal Geometry \cite{BFBS,CFS,LZS} and Multi-Fractal Analysis \cite{EJ}.
During the last three decades, many authors have devoted much effort to studying pathological real-valued functions \cite{AGPS,APG,AGS,AS,GFVS,GPS,BFBS,CS1,CS2,EGS,FGK,MFSS,GFS}.
One of the earliest breakthroughs in this direction was due to Gurariy, who proved that the set of continuous nowhere differentiable functions on $[0,1]$ is lineable \cite{Gu}.
Subsequently, Fonf, Gurariy and Kadets \cite{FGK} proved that the set of continuous nowhere differentiable functions on $[0, 1]$ is spaceable in $C[0, 1]$.
Enflo et al. \cite{EGS} demonstrated that for every infinite dimensional closed subspace $X$ of $C[0, 1]$, the set of functions in $X$ having infinitely many zeros in $[0, 1]$ is spaceable in $X$.
A vast literature on this topic has been built during the last two decades, interested readers may refer to the survey paper \cite{GPS} or the monograph \cite{AGPS} for a more detailed and comprehensive treatment.

\medskip
This paper contributes to the study of large vector spaces of functions endowed with specific fractal properties.
To our knowledge, N. Albuquerque et al. \cite{AlGPS} first considered the lineability and algebrability of Peano curves, followed by D. Cariello et al. \cite{CFS}, who studied the algebrability of self-similar functions.
Later, A. Bonilla et al. \cite{BFBS} explored the algebraic genericity of continuous functions whose graphs have prescribed fractal dimensions. They proved that given $s\in(1,2]$, the set of functions whose graphs have Hausdorff dimension $s$ (i.e., ${\dim}_HG_f([0,1])=s$) is spaceable and the set of functions whose graphs have both Hausdorff and  box dimensions $s$ is maximal-dense-lineable.

Very recently, by addressing two open questions raised by Bonilla et al. \cite[Question 2.1 and Question 3.4]{BFBS}, Liu et al. \cite{LZS} proved that
given $s\in(1,2]$, the set of functions $f\in C[0,1]$ whose graphs have both box and Hausdorff dimensions $s$ everywhere in $[0,1]$
is maximal-dense-lineable and also dense-algebrable.  They also put forward the following conjecture \cite[Question 5]{LZS}.
Notably, C. Esser et al. \cite{EMVVS} recently proved that the set of functions $f\in C[0,1]$ with ${\dim}_HG_f([0,1])=s\in(1,2]$ is strongly $\mathfrak{c}$-algebrable, a notion originally introduced in \cite{BG}.

\begin{question}\label{qu1} Let ${\dim}$ be one of ${\dim}_H$, $\overline{{\dim}}_B$ and $\underline{{\dim}}_B$. Let $\alpha, \beta$ be two cardinal numbers with $1<\alpha<\beta$. Given $s\in(1,2]$, is it possible to obtian the pointwise spaceability or $(\alpha,\beta)$-lineability/spaceability of the set of functions $f\in C[0,1]$ with ${\dim}G_f([0,1])=s$?
\end{question}
In this paper, we intend to answer the above question. The paper is organized as follows.
In forthcoming Section 2, we shall provide the definitions of Hausdorff and box dimensions and a series of auxiliary results which will be used throughout the paper.
In Section 3, we obtain a partial answer to Question \ref{qu1} by showing that the set of functions $f\in C[0,1]$ with ${\dim}_HG_f([0,1])=s\in (1,2]$ is $(p,\mathfrak{c})$-spaceable for $p=1,2$ and $(n,n+m)$-lineable for any $n,m\in\mathbb{N}$.
In Section 4, we prove that the set of functions $f\in C[0,1]$ with $\overline{{\dim}}_BG_f([0,1])=s\in (1,2]$ is $(\alpha,\mathfrak{c})$-spaceable if and only if the cardinal number $\alpha<\aleph_0$. This yields a complete solution to Question \ref{qu1} regarding upper box dimension. At the end of this section, we briefly discuss the $(\alpha,\beta)$-lineability of the set of functions whose graphs have both Hausdorff and box dimensions $s\in(1,2]$.

\section{Preliminaries}
In this section, for the convenience of the reader, we recall the definitions of Hausdorff dimension and box dimension, and then provide some useful lemmas which will be needed throughout this paper.

For a non-empty set $A \subseteq \mathbb{R}^d$, and any $\delta > 0$, let
\[
\mathcal{H}^s_\delta(A) = \inf\left\{ \sum_{i=1}^\infty |U_i|^s : A \subseteq \bigcup_{i=1}^\infty U_i,\ |U_i| \leq \delta,\ \forall i \right\}
\]
where $|U|$ represents the diameter of the set $U$, and let
\[
\mathcal{H}^s(A) = \lim_{\delta \to 0} \mathcal{H}^s_\delta(A).
\]
The Hausdorff dimension of $A$ is
\[
\dim_H A = \inf\{ s \geq 0 : \mathcal{H}^s(A) = 0 \} = \sup\{ s \geq 0 : \mathcal{H}^s(A) = \infty \}.
\]

\medskip
Furthermore, given a non-empty and bounded subset $A$ of $\mathbb{R}^d$, let $N_\delta(A)$ denote the smallest number of sets of diameter at most $\delta$  needed to cover $A$. Then the lower and upper box dimensions of $A$ are
\[
\underline{\dim}_B A = \liminf_{\delta \to 0} \frac{\log N_\delta(A)}{-\log \delta} \quad \text{and} \quad \overline{\dim}_B A = \limsup_{\delta \to 0} \frac{\log N_\delta(A)}{-\log \delta}.
\]
If these two dimensions coincide, we call this common value the box dimension of $A$, denoted by
\[
\dim_B A = \lim_{\delta \to 0} \frac{\log N_\delta(A)}{-\log \delta}.
\]
For more details, we refer the reader to \cite{Fal}. Notice that
\[
\dim_H A \leq \underline{\dim}_B A \leq \overline{\dim}_B A
\]
and
\begin{equation}\label{equnew1}
\dim(A\times [0,1])= {\dim}A+1
\end{equation}
for any non-empty bounded subset $A$ of $\mathbb{R}^d$, where ${\dim}$ denotes any one of ${\dim}_H$, $\underline{\dim}_B$ and $\overline{\dim}_B$.

\medskip
\begin{definition}
Let $K \subseteq [0,1]$ and $f \in C(K)$.
If for all but countably many points $x \in K$, there exists some $r_x > 0$ such that $f$ is Lipschitz on $K \cap B(x,r_x)$, then $f$ is said to be \emph{nearly locally Lipschitz on $K$}. Here and in what follows, let $B(x,r)$ denote the open ball centered at $x$ with radius $r$.
\end{definition}

\medskip
By the countable stability and bi-Lipschitz invariance of the Hausdorff dimension, we can easily obtain the following lemma.
\begin{lemma}\label{lemL}
Let $K \subseteq \mathbb{R}$ , $f, g \in C(K)$. If $f$ is nearly locally Lipschitz on $K$, then we have
\[
\dim_H G_{f+g}(K) = \dim_H G_g(K).
\]
In particular, the above equality holds for a Lipschitz function $f$.
\end{lemma}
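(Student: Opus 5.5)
The plan is to split $K$ into the exceptional countable set $E$ and its complement $K\setminus E$. On the complement I will cover by countably many pieces on which $f$ is Lipschitz, and on each piece I will use a bi-Lipschitz shear of the plane. Throughout, $E\subseteq K$ is the countable set of points at which no such $r_x$ exists. Since $G_{f+g}(E)$ and $G_g(E)$ are countable, they have Hausdorff dimension $0$. By countable stability, the statement reduces to
\[
\dim_H G_{f+g}(K\setminus E)=\dim_H G_g(K\setminus E).
\]
Note that $K$ need not be bounded here; this causes no difficulty, since we only use countable stability.

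\textbf{Step 1 (countable Lipschitz cover).} For each $x\in K\setminus E$, choose $r_x>0$ with $f$ Lipschitz on $K\cap B(x,r_x)$. Shrinking $r_x$ if needed, I can pick a rational interval $I_x=(a,b)$ with $x\in I_x\subseteq B(x,r_x)$. There are only countably many rational intervals, so countably many sets $K\cap I_j$ ($j\in\mathbb{N}$) cover $K\setminus E$. On each of them $f$ is Lipschitz, being a restriction of a Lipschitz function, with some constant $L_j$. This step, obtaining countability from the possibly uncountable family of balls, is the only point needing care, and the rational-interval trick (equivalently, Lindelöf's property of $\mathbb{R}$) handles it.

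\textbf{Step 2 (shear map).} On a fixed piece $A_j=K\cap I_j$, I define
\[
\Phi_j\colon G_g(A_j)\to G_{f+g}(A_j),\qquad \Phi_j(x,y)=(x,\,y+f(x)).
\]
This map is a bijection with inverse $(x,y)\mapsto(x,y-f(x))$. For $x,x'\in A_j$ we have
\[
|\Phi_j(x,y)-\Phi_j(x',y')|\le |x-x'|+|y-y'|+L_j|x-x'|\le (2+L_j)\,|(x,y)-(x',y')|,
\]
and the same bound holds for the inverse. Hence $\Phi_j$ is bi-Lipschitz, and bi-Lipschitz invariance gives $\dim_H G_{f+g}(A_j)=\dim_H G_g(A_j)$.

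\textbf{Step 3 (assembly).} By countable stability,
\[
\dim_H G_{f+g}(K)=\max\Big\{\dim_H G_{f+g}(E),\ \sup_j \dim_H G_{f+g}(A_j)\Big\}.
\]
The first term is $0$, and the same decomposition holds for $g$. Since the two suprema agree term by term, the equality follows.

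For the final claim, a Lipschitz $f$ is trivially nearly locally Lipschitz: take $E=\emptyset$ and any $r_x$. Alternatively, one can apply Step 2 directly with $A_j=K$.
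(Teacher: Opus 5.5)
Your proof is correct and takes the same route the paper indicates. The paper only cites countable stability and bi-Lipschitz invariance of $\dim_H$; you supply the omitted details, namely discarding the countable exceptional set, the rational-interval (Lindel\"of) reduction to countably many pieces on which $f$ is Lipschitz, and the shear $(x,y)\mapsto(x,y+f(x))$ as the bi-Lipschitz map on each piece.
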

By the bi-Lipschitz invariance of lower box and upper box dimensions, we also have the following lemma.
\begin{lemma}\label{lemx2}
Let $K\subseteq \mathbb{R}$ be a bounded set and let $f, g \in C(K)$ be two bounded functions. If $g$ is a Lipschitz function, then
\[
\dim G_{f+g}(K) = \dim G_f(K),
\]
where $\dim$ denotes any one of $\underline{\dim}_B$ and $\overline{\dim}_B$.
\end{lemma}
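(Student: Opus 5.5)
The idea is to compare the graphs of $f$ and $f+g$ through the shear map
\[
\Phi:\mathbb{R}^2\to\mathbb{R}^2,\qquad \Phi(x,y)=(x,\,y+g(x)),
\]
which sends $G_f(K)$ onto $G_{f+g}(K)$. Its inverse is $\Phi^{-1}(x,y)=(x,y-g(x))$.

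First I check that $\Phi$ is bi-Lipschitz when restricted to $K\times\mathbb{R}$. Let $L$ be a Lipschitz constant of $g$ on $K$. For $(x_1,y_1),(x_2,y_2)\in K\times\mathbb{R}$,
\[
|\Phi(x_1,y_1)-\Phi(x_2,y_2)|\le |x_1-x_2|+|y_1-y_2|+L|x_1-x_2|\le (2+L)\,|(x_1,y_1)-(x_2,y_2)|.
\]
The same estimate holds for $\Phi^{-1}$, since $-g$ is also $L$-Lipschitz. So $\Phi$ restricted to $G_f(K)$ is a bi-Lipschitz bijection onto $G_{f+g}(K)$.

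Next I make sure the box dimensions are defined. $K$ is bounded and $f$, $g$ are bounded, so both graphs are nonempty bounded subsets of $\mathbb{R}^2$.

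Finally I use the standard fact that lower and upper box dimensions are invariant under bi-Lipschitz maps. Concretely, if $\Psi$ is $c$-Lipschitz, then every set of diameter $\le\delta$ is mapped to a set of diameter $\le c\delta$. Hence
\[
N_{c\delta}(\Psi(A))\le N_\delta(A).
\]
Since $\log(c\delta)/\log\delta\to1$ as $\delta\to0$, taking $\liminf$ and $\limsup$ gives $\underline{\dim}_B\Psi(A)\le\underline{\dim}_B A$ and $\overline{\dim}_B\Psi(A)\le\overline{\dim}_B A$. Applying this to both $\Phi$ and $\Phi^{-1}$ yields the two inequalities in each direction, which proves the equality.

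I don't expect any real obstacle. The only point needing care is that the Lipschitz constant of $\Phi$ involves only the Lipschitz constant of $g$ and nothing about $f$, which may be highly irregular. Note also that boundedness of $f$ and $g$ is used only to ensure the box dimensions are finite and well defined.
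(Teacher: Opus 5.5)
Your proposal is correct and matches the paper's approach, which justifies this lemma in a single line by the bi-Lipschitz invariance of $\underline{\dim}_B$ and $\overline{\dim}_B$. You make that step explicit: the shear $\Phi(x,y)=(x,y+g(x))$ is a bi-Lipschitz bijection from $G_f(K)$ onto $G_{f+g}(K)$, with constants depending only on the Lipschitz constant of $g$, and the covering estimate $N_{c\delta}(\Psi(A))\le N_\delta(A)$ then gives the invariance of both box dimensions.
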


\medskip
\begin{lemma}\cite{LW} \label{lemLW}
Assume that $K\subseteq [0,1]$ is a compact set and $f \in C(K)$. Let $\widetilde{f}$ be the linear extension of $f$ to the whole interval $[0,1]$, i.e.,
\[
\widetilde{f}(x)=
\begin{cases}
f(x), & x \in K, \\
\text{affine}, & \text{on each component of } [0,1]\setminus K.
\end{cases}
\]
Then
\[
\dim_H G_{\widetilde{f}}([0,1]) = \max\left\{ \dim_H G_f(K), 1 \right\}.
\]
\end{lemma}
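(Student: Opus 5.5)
The plan is to decompose the graph of $\widetilde f$ into countably many pieces and apply countable stability of Hausdorff dimension, together with a lower bound coming from projection. Assume $K\neq\emptyset$, since otherwise $\widetilde f$ is not determined.

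First I will write the open set $[0,1]\setminus K$ (open relative to $[0,1]$) as a countable disjoint union of its components $I_j$, $j\in J$ with $J$ countable. Each $I_j$ is an interval whose finite endpoints lying in $(0,1)$ belong to $K$. Possibly one component has the form $[0,a)$ and one the form $(b,1]$. On those boundary components I will read ``affine'' as the natural affine (e.g.\ constant) continuation; only affinity matters for what follows. This gives
\[
G_{\widetilde f}([0,1]) = G_f(K)\cup\bigcup_{j\in J} G_{\widetilde f}(I_j),
\]
where each $G_{\widetilde f}(I_j)$ is a (possibly half-open) straight line segment in $\mathbb{R}^2$ and hence has Hausdorff dimension $1$. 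By countable stability of $\dim_H$,
\[
\dim_H G_{\widetilde f}([0,1])=\max\Bigl\{\dim_H G_f(K),\ \sup_{j\in J}\dim_H G_{\widetilde f}(I_j)\Bigr\}\le \max\{\dim_H G_f(K),1\},
\]
with the convention that the supremum over an empty $J$ contributes nothing.

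For the reverse inequality, the bound $\dim_H G_{\widetilde f}([0,1])\ge \dim_H G_f(K)$ is monotonicity. For the bound $\ge 1$, I will use the orthogonal projection $\pi(x,y)=x$. It is $1$-Lipschitz and maps $G_{\widetilde f}([0,1])$ onto $[0,1]$. Lipschitz maps do not increase Hausdorff dimension, so $\dim_H G_{\widetilde f}([0,1])\ge \dim_H[0,1]=1$. This argument also covers the case $K=[0,1]$, where $J=\emptyset$ and there are no segments.

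I do not expect a genuine obstacle. The only point needing care is the bookkeeping of the boundary components $[0,a)$ and $(b,1]$ when $0$ or $1$ is not in $K$, where one must fix how the affine extension is defined. Since any affine choice yields a segment of dimension $1$, the conclusion is unaffected. Continuity of $\widetilde f$ is not needed for the dimension count itself.
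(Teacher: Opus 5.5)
Your proof is correct. Countable stability, applied to $G_f(K)$ together with the countably many line segments over the components of $[0,1]\setminus K$, gives the upper bound; monotonicity plus the $1$-Lipschitz projection onto the $x$-axis gives the lower bound $\max\{\dim_H G_f(K),1\}$.

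The paper does not prove this lemma itself; it quotes it from \cite{LW}. Your argument is the standard one, and it matches how the paper handles affine pieces elsewhere, e.g.\ the bound $\dim_H G_h(I_j\setminus K_j)\le 1$ combined with countable stability in the proof of Theorem~\ref{thmn1}.
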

\medskip
\begin{lemma}\cite{FF}\label{lemfal-fr}
Let $f, g \in C[0,1]$. Then
\[
\overline{\dim}_B G_{f+g}([0,1])\leq \max\left\{ \overline{\dim}_B G_f([0,1]), \overline{\dim}_B G_g([0,1]) \right\}.
\]
\end{lemma}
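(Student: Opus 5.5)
The plan is to use the standard oscillation estimate for counting $\delta$-mesh squares that meet the graph of a continuous function (cf.\ \cite[Proposition 11.1]{Fal}), combined with the subadditivity of oscillation. It suffices to take $\delta=1/n$ with $n\in\mathbb{N}$. The box-counting dimensions may be computed along this sequence and with $N_\delta$ replaced by the number of $\delta$-mesh squares meeting the set, since this changes the quantities only by bounded factors.

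For a function $h$ on an interval $I$, write $R_h(I)=\sup_{x,y\in I}|h(x)-h(y)|$ for its oscillation. Let $I_1,\dots,I_n$ be the intervals $[(i-1)/n,i/n]$. By continuity, the number of $\delta$-squares in the $i$th column meeting $G_h$ lies between $R_h(I_i)/\delta$ and $2+R_h(I_i)/\delta$. Summing over the columns gives
\[
\delta^{-1}\sum_{i=1}^n R_h(I_i)\;\le\; N_\delta(G_h([0,1]))\;\le\; 2n+\delta^{-1}\sum_{i=1}^n R_h(I_i).
\]

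Now apply the upper bound to $h=f+g$. We have $R_{f+g}(I_i)\le R_f(I_i)+R_g(I_i)$ for every $i$. Using the lower bound for $f$ and for $g$ separately, we get
\[
N_\delta(G_{f+g})\le 2n+N_\delta(G_f)+N_\delta(G_g)\le 4\max\{n,\,N_\delta(G_f),\,N_\delta(G_g)\}.
\]
Next take logarithms, divide by $-\log\delta=\log n$, and pass to the $\limsup$. The constant $\log 4$ disappears in the limit, and the $\limsup$ of a maximum of three sequences is the maximum of their $\limsup$s. This yields
\[
\overline{\dim}_B G_{f+g}([0,1])\le \max\{1,\ \overline{\dim}_B G_f([0,1]),\ \overline{\dim}_B G_g([0,1])\}.
\]
The extra term $1$ can be dropped, because the graph of any continuous function projects onto $[0,1]$, so $\overline{\dim}_B G_f([0,1])\ge 1$.

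The only step needing any care is the two-sided column estimate for continuous functions. It rests on the intermediate value theorem: within a column the graph is connected, so its vertical extent is exactly $R_h(I_i)$, and it meets consecutive squares in that range. Everything after that is routine bookkeeping with logarithms and $\limsup$.
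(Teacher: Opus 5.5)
Your proof is correct: the column estimate $\delta^{-1}\sum_i R_h(I_i)\le N_\delta(G_h)\le 2n+\delta^{-1}\sum_i R_h(I_i)$, combined with $R_{f+g}\le R_f+R_g$, gives $N_\delta(G_{f+g})\le 2n+N_\delta(G_f)+N_\delta(G_g)$, and the extra term $1$ is harmless because $\overline{\dim}_B G_f([0,1])\ge 1$. The paper gives no proof of this lemma and only cites \cite{FF}; your oscillation-sum argument is the standard proof of this fact, and it uses the same mesh-square counting that the paper employs in its proof of Lemma \ref{lemLL}.
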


\medskip
\begin{lemma}\cite{Fal}\cite{FF}\label{lemfalc}
Let $f, g \in C[0,1]$. If $\overline{\dim}_B G_f([0,1]) \neq \overline{\dim}_B G_g([0,1])$, then
\[
\overline{\dim}_B G_{f+g}([0,1])= \max\left\{ \overline{\dim}_B G_f([0,1]), \overline{\dim}_B G_g([0,1]) \right\}.
\]
\end{lemma}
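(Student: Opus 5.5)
The plan is to get both inequalities from Lemma \ref{lemfal-fr}: the upper bound directly, and the lower bound by applying it to a decomposition of $f$. Without loss of generality assume
\[
a:=\overline{\dim}_B G_f([0,1]) > b:=\overline{\dim}_B G_g([0,1]),
\]
since the statement is symmetric in $f$ and $g$.

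\emph{Upper bound.} Lemma \ref{lemfal-fr} immediately gives
\[
\overline{\dim}_B G_{f+g}([0,1]) \le \max\{a,b\} = a .
\]

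\emph{Lower bound.} Write $f=(f+g)+(-g)$. Both summands lie in $C[0,1]$. The graph $G_{-g}([0,1])$ is the image of $G_g([0,1])$ under the reflection $(x,y)\mapsto(x,-y)$. This reflection is an isometry of $\mathbb{R}^2$, so it preserves $N_\delta$ for every $\delta$, and hence
\[
\overline{\dim}_B G_{-g}([0,1]) = b .
\]
Applying Lemma \ref{lemfal-fr} to the pair $f+g$ and $-g$ gives
\[
a=\overline{\dim}_B G_f([0,1]) \le \max\left\{\overline{\dim}_B G_{f+g}([0,1]),\, b\right\}.
\]
Because $a>b$, the maximum cannot equal $b$. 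It must therefore be $\overline{\dim}_B G_{f+g}([0,1])$, and so $\overline{\dim}_B G_{f+g}([0,1])\ge a$.

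Combining the two bounds yields $\overline{\dim}_B G_{f+g}([0,1])=a=\max\{a,b\}$. There is no real obstacle here. The only point needing a remark is the invariance of the upper box dimension under the reflection, which is immediate from the definition via covering numbers.
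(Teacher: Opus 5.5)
Your argument is correct. The upper bound is Lemma \ref{lemfal-fr} applied directly. For the lower bound, applying the same lemma to $f=(f+g)+(-g)$, together with the invariance of $\overline{\dim}_B$ under the isometry $(x,y)\mapsto(x,-y)$, forces $\overline{\dim}_B G_{f+g}([0,1])\ge\overline{\dim}_B G_f([0,1])$, since the maximum cannot be attained by the strictly smaller value $\overline{\dim}_B G_g([0,1])$. The paper gives no proof of its own here and only cites \cite{Fal} and \cite{FF}; your reduction to Lemma \ref{lemfal-fr} is the standard way this is derived, so nothing is missing beyond that already-cited lemma.
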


\medskip
\begin{lemma}\cite{LL}\label{lemLL}
Let $K \subseteq [0,1]$ be a compact set. Then for any $f \in C(K)$, we have
\[
\overline{\dim}_BG_{\widetilde{f}}([0,1]) \leq \overline{\dim}_B K + 1
\quad \text{and} \quad
\underline{\dim}_B G_{\widetilde{f}}([0,1]) \leq \underline{\dim}_B K + 1,
\]
where $\widetilde{f} \in C[0,1]$ is the linear extension of $f$ to the whole interval $[0,1]$.
\end{lemma}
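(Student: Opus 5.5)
The plan is a direct covering count on the $\delta$-mesh. We may assume $K\neq\emptyset$. Put $M=\max_K|f|$; since $\widetilde f$ is affine on each gap with endpoint values taken from $f$ (constant on the possible end gaps $[0,\min K)$ and $(\max K,1]$), we get $|\widetilde f|\le M$ on $[0,1]$. We may use the grid version of box-counting: replace $N_\delta$ by the number of $\delta$-mesh squares, respectively $\delta$-mesh intervals, meeting the set. This changes the counts only by bounded multiplicative constants and so does not affect $\underline{\dim}_B$ or $\overline{\dim}_B$. Write $N_\delta(K)$ for the number of intervals $I_j=[j\delta,(j+1)\delta]$ meeting $K$.

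The goal is the bound
\[
N_\delta\bigl(G_{\widetilde f}([0,1])\bigr)\le C\,\frac{N_\delta(K)}{\delta}
\]
for all small $\delta$, with $C$ depending only on $M$. Taking $\log$ and dividing by $-\log\delta$, then passing to $\limsup$ and $\liminf$, gives both inequalities at once.

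To get the bound, split the columns $I_j$ into two types.

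\emph{Columns meeting $K$.} There are $N_\delta(K)$ of them. Above each one the graph lies in $I_j\times[-M,M]$, so it meets at most $2M/\delta+2$ mesh squares. These columns contribute at most $N_\delta(K)(2M/\delta+2)$ squares.

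\emph{Columns disjoint from $K$.} Such a column lies inside a single complementary component $(a,b)$ with $b-a\ge\delta$. On that component $\widetilde f$ is affine (or constant at the ends), with total vertical change $|f(b)-f(a)|\le 2M$. An affine piece crossing $k$ columns with total vertical change $V$ meets at most $2k+V/\delta+2$ mesh squares. Summed over all such gaps, the terms $2k$ total at most $2/\delta+O(1)$. The remaining terms give at most $(2M/\delta+2)$ times the number of gaps of length $\ge\delta$.

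The key step, and the one place some care is needed, is bounding this number of long gaps. Their left endpoints lie in $K$ (except possibly one end gap), and any two of them are at distance at least $\delta$. Hence each mesh interval $I_j$ contains at most two of them, and the number of long gaps is at most $2N_\delta(K)+1$.

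Adding everything, and using $N_\delta(K)\ge1$ to absorb the $1/\delta$ terms, gives the bound with, say, $C=10(M+1)$. The main obstacle I expect is exactly this control of the gaps: a naive count allows up to $1/\delta$ gaps, each contributing up to $M/\delta$ squares, which would only yield the trivial bound $2$. The separation argument for the left endpoints is what ties the number of steep affine pieces to $N_\delta(K)$.
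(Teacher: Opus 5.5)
Your argument is correct and is essentially the paper's proof. Both split the mesh columns into those meeting $K$, each contributing $O(M/\delta)$ squares, and stretches where $\widetilde f$ is affine, whose number is $O(N_\delta(K))$, giving $N_\delta(G_{\widetilde f}([0,1]))\le C N_\delta(K)/\delta$. The difference is bookkeeping: the paper counts the affine stretches directly as the at most $\#\Omega_n(K)+1$ components of $[0,1]\setminus\bigcup_{I\in\Omega_n(K)}I$ rather than through your $\delta$-separated left endpoints, and your explicit horizontal term $2k$ (summing to $O(1/\delta)$) is in fact needed, since the paper's per-component bound $3(2^nO_f+2)$ silently omits it.
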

\medskip
For convenience the reader, we provide a proof of the lemma.
\begin{proof}
Write $O_f(A)$ for the oscillation of a bounded function $f$ on $A$, i.e.,
\[
O_f(A) = \sup_{x,y \in A} |f(x)-f(y)|.
\]
Then by the definition of $\widetilde{f}$, we have $O_{\widetilde{f}}[0,1] = O_f(K)$. For each $n \in \mathbb{N}^+$, denote by $\Omega_n(K)$ the collection of all closed dyadic intervals of order $n$,
which intersect $K$, i.e.,
\[
\Omega_n(K) = \left\{ [i 2^{-n}, (i+1) 2^{-n}] : [i 2^{-n}, (i+1) 2^{-n}] \cap K \neq \emptyset,\ i \in \mathbb{Z} \right\}.
\]
Then the number of components of
\[
[0,1] \setminus \bigcup_{I \in \Omega_n(K)} I
\]
is at most $\#\Omega_n(K)+1$. It follows from the definition of $\widetilde{f}$ that it is affine on each component of $[0,1] \setminus \bigcup_{I \in \Omega_n(K)} I$.
This implies that for each such component $J$, the number of $2^{-n}$-mesh squares which intersect $G_{\widetilde{f}}(J)$, denoted by $N_{2^{-n}}(\widetilde{f}|_J)$, is less than $3(2^n O_f + 2)$. Then we get that
\begin{align*}
N_{2^{-n}}(\widetilde{f}) &\leq 3(2^n O_f + 2)(\#\Omega_n(K)+1) + (2^n O_f + 2) \#\Omega_n(K) \\
&\leq C 2^n \#\Omega_n(K),
\end{align*}
where $C>0$ is a constant independent of $n$.

It follows that
\[
\overline{\dim}_B G_{\widetilde{f}}([0,1]) = \limsup_{n \to \infty} \frac{\log N_{2^{-n}}(\widetilde{f})}{-\log 2^{-n}} \leq \overline{\dim}_B K + 1,
\]
and
\[
\underline{\dim}_B G_{\widetilde{f}}([0,1]) = \liminf_{n \to \infty} \frac{\log N_{2^{-n}}(\widetilde{f})}{-\log 2^{-n}} \leq \underline{\dim}_B K + 1.
\]
\end{proof}
\begin{lemma}\cite{BDE,BH}\label{balk1}
Let $K\subseteq\mathbb{R}^d$ be an uncountable compact set. Then for a prevalent $f\in C(K)$, we have
\[
{\dim}_HG_f(K)={\dim}_HK+1.
\]
\end{lemma}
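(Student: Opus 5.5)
Since $G_f(K)$ is contained in $K\times \mathbb{R}$, the proof splits into an upper bound valid for \emph{every} $f$ and a lower bound valid for a prevalent set of $f$.

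\emph{Upper bound.} Because $f$ is bounded on the compact set $K$, we have $G_f(K)\subseteq K\times[-M,M]$ for some $M>0$. Using the product inequality $\dim_H(E\times F)\le \dim_H E+\overline{\dim}_B F$, we get
\[
\dim_H G_f(K)\le \dim_H K+1
\]
for all $f\in C(K)$. No prevalence is needed here.

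\emph{Lower bound.} Since a countable intersection of prevalent sets is prevalent, it suffices to fix $t<\dim_H K$ and $s<t+1$, and to show that
\[
E_{s}=\{f:\dim_H G_f(K)\ge s\}
\]
is prevalent; we then let $t\uparrow\dim_H K$ and $s\uparrow t+1$ along sequences. The set $E_s$ is Borel (or at least universally measurable), which is standard for Hausdorff-dimension conditions on graphs in $C(K)$.

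To prove $E_s$ is prevalent, I would take as transverse measure $\mu$ the law, restricted to $K$, of a fractional Brownian field $B_H$ on $\mathbb{R}^d$ whose index $H\in(0,1)$ is chosen small. By Frostman's lemma there is a probability measure $\nu$ on $K$ with finite $t$-energy. Fix any $g\in C(K)$ and push $\nu$ forward to the graph of $g+B_H$. By Fubini, the expected $s$-energy of this graph measure is
\[
\int\!\!\int \mathbb{E}\bigl(|x-y|^2+|g(x)-g(y)+B_H(x)-B_H(y)|^2\bigr)^{-s/2}\,d\nu(x)\,d\nu(y).
\]
The increment $B_H(x)-B_H(y)$ is Gaussian with standard deviation $\sigma=|x-y|^{H}$, so its density is bounded by $C\sigma^{-1}$, uniformly in the shift $a=g(x)-g(y)$. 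For $s>1$ this gives
\[
\int (r^2+u^2)^{-s/2}\varphi_\sigma(u-a)\,du\le C\sigma^{-1}r^{1-s},
\]
with $r=|x-y|$. The expected energy is therefore bounded by $C\iint |x-y|^{1-s-H}\,d\nu\,d\nu$, which is finite provided $s-1+H\le t$. Choosing $H<t+1-s$ makes this hold. Hence $\mu$-almost every translate $g+B_H$ has a graph supporting a measure of finite $s$-energy, so $\dim_H G_{g+B_H}(K)\ge s$. Since $g$ was arbitrary, $E_s$ is prevalent.

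\emph{Main obstacle.} The difficulty is the degenerate case $\dim_H K=0$, where $t$ cannot be taken positive. Since $K$ is uncountable, $\dim_H G_f(K)\ge 1$ still has to be proved, and a power-law probe is not enough. In this case we have $s<1$, and the Gaussian bound becomes $C\sigma^{-s}$, leading to a requirement of the form $\iint \sigma(|x-y|)^{-s}\,d\nu\,d\nu<\infty$.

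I would handle this as follows. An uncountable compact set supports a nonatomic measure $\nu$. For such a $\nu$ one can choose a gauge $\phi$, with $\phi(r)\to\infty$ as $r\to 0$, such that $\iint \phi(|x-y|)\,d\nu\,d\nu<\infty$. I would then replace $B_H$ by a Gaussian process with increment variance $\sigma(r)^2$, choosing $\sigma(r)\to 0$ slowly enough that $\sigma^{-s}\lesssim\phi$ for every $s<1$ (for instance, logarithmic-type decay). The remaining technical point is verifying that such a process has a version that is continuous on $K$.
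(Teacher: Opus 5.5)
Your upper bound and your argument for $\dim_H K>0$ are correct. The paper gives no proof of this lemma and only cites two sources. Your positive-dimensional argument is essentially the fractional Brownian probe of Bayart--Heurteaux, the first of them. It is also the only case the paper actually uses, since the lemma is applied only to sets $K_j$ with $\dim_H K_j=s-1>0$.

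\textbf{The gap is the case $\dim_H K=0$.} This is exactly what the second source (Balka--Darji--Elekes) adds, and there the step you call ``the remaining technical point'' is the entire difficulty. The energy estimate needs the increment deviation $\sigma$ to decay slowly, sample continuity needs it to decay fast, and nothing in your sketch reconciles the two.

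Your concrete suggestion fails in general. Take $\sigma(r)=(\log(1/r))^{-a}$. If continuity is to come from the field on $\mathbb{R}^d$, the Dudley--Fernique criterion forces $a>1/2$. Now let $K$ be a Cantor set with $2^k$ intervals of length $\delta_k=e^{-4^k}$ at level $k$, so $\dim_H K=0$. For every probability $\nu$ on $K$, Cauchy--Schwarz gives $\nu\otimes\nu(|x-y|\le\delta_k)\ge 2^{-k}$. Since $\sigma(\delta_k)^{-s}=2^{2aks}$, this yields $\iint\sigma(|x-y|)^{-s}\,d\nu\,d\nu\gtrsim\sum_k 2^{k(2as-1)}=\infty$ once $s\ge 1/(2a)$, that is, for all $s$ close to $1$.

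More generally, a slowly growing gauge $\phi$ forces $\sigma$ to decay more slowly than any prescribed rate. Continuity on $K$ can then only come from the sparseness of a well-chosen Cantor subset of $K$, for instance via Dudley's entropy integral for the metric $\sigma(|x-y|)$ on that subset. Your proposal does not contain that argument. As written, the lemma is proved only when $\dim_H K>0$.

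\textbf{A simple repair} stays inside your energy framework but drops the requirement that the probe's increments be governed by $|x-y|$. Since $K$ is uncountable, it contains a Cantor set $C$. Let $\nu$ be a Bernoulli-type measure on $C$, and let $\psi\in C(K)$ be a Cantor-function-type map on $C$, extended to $K$ by Tietze, with $\psi_*\nu=\mathcal{L}^1|_{[0,1]}$. Take as probe the law of $\lambda\psi$ with $\lambda$ uniform on $[0,1]$. For $0<s<1$ one has $\int_0^1|a+\lambda b|^{-s}\,d\lambda\le\frac{2^s}{1-s}|b|^{-s}$. Hence, for every $g\in C(K)$, the expected $s$-energy of the lift of $\nu$ to the graph of $g+\lambda\psi$ is at most
\[
\frac{2^s}{1-s}\iint|\psi(x)-\psi(y)|^{-s}\,d\nu(x)\,d\nu(y)=\frac{2^s}{1-s}\int_0^1\!\!\int_0^1|u-v|^{-s}\,du\,dv<\infty .
\]
So $\dim_H G_{g+\lambda\psi}(K)\ge s$ for a.e.\ $\lambda$. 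Intersecting over $s\uparrow 1$ gives $\dim_H G_f(K)\ge 1$ for prevalent $f$, with no continuity issue at all. Equivalently, you can apply Marstrand's projection theorem to $\{(g(x),\psi(x)):x\in K\}$, whose Hausdorff dimension is at least $1$.
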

\begin{lemma}\cite{Bal,LTW}\label{balk2}
Let $K\subseteq\mathbb{R}^d$ be an uncountable compact set with at most finitely many isolated points. Then for a prevalent $f\in C(K)$, we have
\[
\overline{{\dim}}_BG_f(K)=\overline{{\dim}}_BK+1.
\]

\end{lemma}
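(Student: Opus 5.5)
The plan is to prove the upper bound for \emph{every} $f\in C(K)$ and to get the lower bound for a prevalent $f$ from a one-dimensional probe. Write $d=\overline{\dim}_B K$.

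\emph{Upper bound.} I would repeat the counting in the proof of Lemma \ref{lemLL}, working directly on $K$. For each $n$, let $\Omega_n(K)$ be the dyadic intervals of order $n$ meeting $K$. Over each $I\in\Omega_n(K)$, the part $G_f(I\cap K)$ of the graph is covered by at most $2^nO_f(I\cap K)+2\le 2^n\|f\|_\infty\cdot 2+2$ mesh squares. Hence
\[
N_{2^{-n}}(G_f(K))\le C\,2^n\,\#\Omega_n(K),
\]
which gives $\overline{\dim}_BG_f(K)\le d+1$ for all $f$. The same count also yields the two-sided estimate
\[
N_{2^{-n}}(G_f(K))\asymp \#\Omega_n(K)+2^n\sum_{I\in\Omega_n(K)}O_f(I\cap K).
\]
The oscillation of a sum is at most the sum of the oscillations, so the $K$-version of Lemma \ref{lemfal-fr} follows:
\[
\overline{\dim}_BG_{f+g}(K)\le\max\{\overline{\dim}_BG_f(K),\overline{\dim}_BG_g(K)\}.
\]

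\emph{Lower bound via a probe.} Let $S=\{f\in C(K):\overline{\dim}_BG_f(K)<d+1\}$. I would first check that $S$ is Borel: $S=\bigcup_{q}\bigcap_{m}\bigcup_{n\ge m}$ of sets defined through the oscillation sums, and each $O_f(I\cap K)$ is continuous in $f$. Suppose $g\in C(K)$ satisfies $\overline{\dim}_BG_g(K)=d+1$, and let $\mu$ be the image of Lebesgue measure on $[0,1]$ under $\lambda\mapsto\lambda g$. If $f+\lambda_1g$ and $f+\lambda_2g$ both lie in $S$ with $\lambda_1\ne\lambda_2$, then the subadditivity above puts $(\lambda_1-\lambda_2)g$ in $S$, which is false. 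So for each $f$ at most one $\lambda$ gives $f+\lambda g\in S$, i.e.\ $\mu(S-f)=0$ for all $f$. Thus $S$ is shy and its complement is prevalent.

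\emph{Main obstacle: constructing $g$.} Removing small open neighbourhoods of the finitely many isolated points changes neither $d$ nor anything essential, so I may assume every $I\in\Omega_n(K)$ meeting the perfect part $P$ contains two distinct points of $K$. Choose scales $n_1<n_2<\cdots$ with $\#\Omega_{n_k}(K)\ge 2^{n_k(d-1/k)}$ and amplitudes $a_k=2^{-n_k/k}$. Let $\varphi_k\in C(K)$, $0\le\varphi_k\le1$, take the values $0$ and $1$ at two points of $K\cap I$ for every $I\in\Omega_{n_k}(K)$ meeting $P$; such a $\varphi_k$ exists by Tietze, since the chosen points form a finite set. Put $g=\sum_k a_k\varphi_k$. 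I choose the $n_k$ inductively and rapidly increasing so that:
\begin{itemize}
\item the partial sum $\sum_{j<k}a_j\varphi_j$ oscillates by less than $a_k/4$ on every set $I\cap K$ with $|I|=2^{-n_k}$, using uniform continuity;
\item $\sum_{j>k}a_j<a_k/4$.
\end{itemize}
Then $O_g(I\cap K)\ge a_k/2$ for those $I$.

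The main difficulty is ensuring that enough intervals in $\Omega_{n_k}(K)$ meet $P$ rather than only isolated points; after the reduction above, all of them do. Granting this, the two-sided count gives
\[
N_{2^{-n_k}}(G_g)\gtrsim 2^{n_k}\,a_k\,\#\Omega_{n_k}(K)\ge 2^{n_k(1+d-2/k)},
\]
so $\overline{\dim}_BG_g(K)\ge d+1$, which completes the argument.
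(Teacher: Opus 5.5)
\textbf{Genuine gap.} The paper does not prove this lemma; it only quotes it from the literature. Your argument breaks at its central step. The two-sided estimate $N_{2^{-n}}(G_f(K))\asymp\#\Omega_n(K)+2^n\sum_{I}O_f(I\cap K)$ holds only in the upper direction once $K$ is not an interval. Its lower half is the interval fact behind Lemma \ref{lemfal-fr}, and it relies on the graph over an interval being connected; $G_f(I\cap K)$ need not be connected. If $I\cap K$ is two points where $f$ takes the values $0$ and $1$, the column costs $2$ squares, not $2^n$. What actually governs the count is $\sum_I N_{2^{-n}}(f(I\cap K))$, and the value sets of a sum behave like sumsets, whose covering numbers can multiply.

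Consequently your $K$-version of Lemma \ref{lemfal-fr} is false, not merely unproved. Take the Cantor set $K$ of ratio $r<\frac12$, coded by $(x_j)\in\{0,1\}^{\mathbb{N}}$, and put $t=\log 2/\log(1/r)$. The functions $f=\sum_{j\ \mathrm{odd}}x_j2^{-j}$ and $g=\sum_{j\ \mathrm{even}}x_j2^{-j}$ satisfy $\overline{\dim}_BG_f(K)=\overline{\dim}_BG_g(K)=\frac{1+t}{2}$. But $f+g$ maps each $m$-cylinder onto an interval of length $2^{-m}$, so $\overline{\dim}_BG_{f+g}(K)=1$. A lacunary version of the same odd/even splitting even breaks the weak form you need. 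Use digits only in sparse blocks $n_k<j\le n_k(1+1/t)$, with weights $A_k2^{-(j-n_k)}$, where $\log(1/A_k)=o(n_k)$ and $A_{k+1}\le r^{n_k(1+1/t)}$. This gives $f,g$ whose graphs have upper box dimension at most $t+\frac12$, while $\overline{\dim}_BG_{f+g}(K)=t+1$. The line $-f+\mathbb{R}(f+g)$ then meets your $S$ at $\lambda=0$ and at $\lambda=1$, so ``at most one $\lambda$ puts $f+\lambda g$ in $S$'' has no basis. (Borel measurability can be rescued separately: $\{f:G_f(K)\cap Q\neq\emptyset\}$ is closed for each closed mesh square $Q$.)

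\textbf{The probe is also too weak as built.} A $\varphi_k$ taking only the values $0,1$ on $I\cap K$ contributes $O(1)$ squares per column, so you only get $N_{2^{-n_k}}(G_g)\gtrsim\#\Omega_{n_k}(K)$. To repair it, $\varphi_k$ should take about $M_k\approx 2^{n_k}a_k$ values spaced by $C2^{-n_k}/a_k$. These values should sit at points of $K\cap I$ clustered near a point of the perfect part where $\sum_{j<k}a_j\varphi_j$ varies by less than $2^{-n_k}$, and the tail must satisfy $\sum_{j>k}a_j\ll 2^{-n_k}$. Also, a member of $\Omega_n(K)$ may meet the perfect part only at an endpoint, so only a fixed proportion of the intervals is usable.

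The missing idea is a transversality estimate valid for an \emph{arbitrary} fixed $f$, replacing subadditivity. For the chosen points $y_0,\dots,y_M$ in a column, the set $\{\lambda\in[0,1]:|f(y_i)-f(y_j)+\lambda(g(y_i)-g(y_j))|<2^{-n_k}\}$ has measure $\lesssim 1/|i-j|$. Hence the expected number of collisions is $O(M\log M)$. By Cauchy--Schwarz and Markov, outside a $\lambda$-set of measure $O(M^{-\varepsilon/2})$ the column contains $\gtrsim M^{1-\varepsilon}$ distinct levels of $f+\lambda g$. So outside a set of measure $O(M_k^{-\varepsilon/2})$, at least half of the usable columns are good. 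Borel--Cantelli in $k$, followed by letting $\varepsilon\downarrow0$, then gives $\overline{\dim}_BG_{f+\lambda g}(K)=\overline{\dim}_BK+1$ for a.e.\ $\lambda$.

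Your universal upper bound and the reduction to the perfect part are fine. For $K\subseteq\mathbb{R}^d$ use dyadic cubes, and do not reuse $d$ for $\overline{\dim}_BK$.
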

\medskip
Recall that an $F$-space is a topological space whose topology can be defined by a complete translation-invariant metric. Note that the continuous function space $C(K)$ for a compact subset $K$ of $\mathbb{R}$ is an $F$-space when equipped with maximum norm. We also recall that (see \cite{AGPS}) if $A, B$ are subsets of a vector space, then $A$ is called stronger than $B$ whenever $A+B\subseteq A$.  The following well-known result is due to F\'{a}varo et al. \cite{FPRR}.
\begin{lemma}\cite{FPRR}\label{lemF1}
Let $\alpha\geq \aleph_0$ and $X$ be an $F$-space. Let $A, B$ be subsets of $X$ such that $A$ is $\alpha$-lineable and $B$ is $1$-lineable. If $A\cap B=\emptyset$ and $A$ is stronger than $B$, then $A$ is not $(\alpha, \beta)$-spaceable, regardless of the cardinal number $\beta$.
\end{lemma}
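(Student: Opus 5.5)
The plan is to exhibit one $\alpha$-dimensional subspace $W'\subseteq A\cup\{0\}$ such that every closed subspace containing $W'$ meets $B\setminus\{0\}$. Since $A\cap B=\emptyset$, no such closed subspace can lie inside $A\cup\{0\}$. This rules out $(\alpha,\beta)$-spaceability for every cardinal $\beta$.

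\emph{Setup.} Since $B$ is $1$-lineable, fix $b\neq 0$ with $tb\in B$ for all $t\neq0$. Since $A$ is $\alpha$-lineable, fix an $\alpha$-dimensional subspace $W\subseteq A\cup\{0\}$ with a Hamel basis $\{w_i\}_{i\in I}$, where $|I|=\alpha$. Because $\alpha\ge\aleph_0$, we can single out a countable subfamily $\{w_n\}_{n\in\mathbb{N}}$ of this basis. Let $d$ be a complete translation-invariant metric defining the topology of $X$. Scalar multiplication is continuous, so $tw_n\to0$ as $t\to0$. Hence we can pick scalars $\varepsilon_n\neq0$ with $d(\varepsilon_n w_n,0)<1/n$. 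Put $v_n=b+\varepsilon_n w_n$. Then translation invariance gives $d(v_n,b)=d(\varepsilon_nw_n,0)\to0$, so $v_n\to b$.

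\emph{Definition of $W'$.} Let $W'$ be the span of $\{v_n\}_{n}$ together with $\{w_i\}_{i\in I\setminus\mathbb{N}}$. A general element has the form
\[
u=\Big(\sum_n c_n\Big)b+\Big(\sum_n c_n\varepsilon_n w_n+\sum_{i\notin\mathbb{N}}d_iw_i\Big),
\]
with finitely many nonzero coefficients. Call the second bracket $w$; it lies in $W$.

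If $w=0$, then linear independence of the $w_i$ forces all $c_n=0$ and all $d_i=0$, so $u=0$. The same computation shows that the generators of $W'$ are linearly independent, so $\dim W'=\alpha$.

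If $w\neq0$, then $w\in A$. Either $\sum c_n=0$, in which case $u=w\in A$, or $\big(\sum c_n\big)b\in B$, in which case $u\in A+B\subseteq A$ because $A$ is stronger than $B$. Either way $W'\subseteq A\cup\{0\}$.

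\emph{Conclusion.} Suppose $W_\beta$ is any closed subspace with $W'\subseteq W_\beta$. Then $W_\beta$ contains each $v_n$, hence their limit $b$. But $b\in B$ and $b\neq0$, and $A\cap B=\emptyset$, so $b\notin A\cup\{0\}$. Thus no closed $W_\beta$ with $W'\subseteq W_\beta\subseteq A\cup\{0\}$ exists, and $A$ is not $(\alpha,\beta)$-spaceable.

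The only delicate point is the verification that $W'\subseteq A\cup\{0\}$ while $\dim W'=\alpha$ is preserved. It is handled by writing each element as a multiple of $b$ plus an element of $W$ and using linear independence of the original basis.
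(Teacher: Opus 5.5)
The paper does not prove this lemma; it simply quotes it from F\'{a}varo et al., so there is no internal argument to compare with. Your proof is correct and is the standard perturbation argument for this result. You replace countably many Hamel basis vectors $w_n$ of $W$ by $v_n=b+\varepsilon_n w_n$ with $\varepsilon_n w_n\to 0$. The new span still lies in $A\cup\{0\}$ because $A+B\subseteq A$, while its closure picks up $b\in B\setminus\{0\}$, which lies outside $A\cup\{0\}$.

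One step is stated too quickly. For linear independence of the generators you must start from $u=0$, not from $w=0$. From $u=0$ you get $\bigl(\sum_n c_n\bigr)b=-w\in W$. If $\sum_n c_n\neq 0$, this puts $b$ in $W\setminus\{0\}\subseteq A$, contradicting $A\cap B=\emptyset$. Hence $\sum_n c_n=0$, so $w=0$, and only then does your computation give that all coefficients vanish.

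Note also that completeness of $d$ is never used. Your argument therefore works in any metrizable topological vector space.
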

\medskip
Assume that $s \in (0,+\infty)$ and $K \subseteq [0,1]$. We denote $H_s(K)$, $H_{<s}(K)$, $\overline{B}_s(K)$ and $\overline{B}_{<s}(K)$ by
\noindent
\[
H_s(K) = \left\{ f \in C(K) : {\dim}_H G_f(K) = s \right\},
\]
\[
H_{<s}(K) = \left\{ f \in C(K) : {\dim}_H G_f(K) < s \right\},
\]
\[
\overline{B}_s(K) = \left\{ f \in C(K) : \overline{\dim}_B G_f(K) = s \right\},
\]
\[
\overline{B}_{<s}(K) = \left\{ f \in C(K) : \overline{\dim}_B G_f(K) < s \right\}.
\]
By replacing the corresponding dimensions in the above definitions, we can define $\underline{B}_s(K)$, $\underline{B}_{<s}(K)$ (regarding lower box dimension), $B_s(K)$ and $B_{<s}(K)$ (regarding box dimension) analogously. Note that if the functions in the above definitions involve upper box, lower box or box dimensions, we always assume them to be bounded functions.

\medskip
An immediate corollary of Lemma \ref{lemF1} is as follows.
\begin{corollary}\label{corn1}
Let $s\in(1,2]$ and let $\alpha$ be a cardinal number with $\alpha\geq\aleph_0$. Then none of $H_s[0,1]$, $\overline{B}_s[0,1]$, $\underline{B}_s[0,1]$ and $B_s[0,1]$ is $(\alpha,\beta)$-spaceable, regardless of the cardinal number $\beta$.
\end{corollary}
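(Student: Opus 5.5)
We apply Lemma \ref{lemF1} with $X=C[0,1]$, which is an $F$-space. For each of the four sets we take $A$ to be the set itself and $B$ to be the corresponding ``small dimension'' set. For $A=H_s[0,1]$ take $B=H_{<s}[0,1]$; for $A=\overline{B}_s[0,1]$ take $B=\overline{B}_{<s}[0,1]$, and analogously for the lower box and box dimension cases. Trivially $A\cap B=\emptyset$. $B$ is $1$-lineable, since it contains every nonzero constant (or linear) function, whose graph is a segment of dimension $1<s$. The hypothesis that $A$ is $\alpha$-lineable for $\alpha\ge\aleph_0$ is only needed for the conclusion to be nonvacuous. If $A$ is not $\alpha$-lineable, then $A$ is not $(\alpha,\beta)$-spaceable by definition, since $(\alpha,\beta)$-spaceability requires $\alpha$-lineability. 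So we may assume $A$ is $\alpha$-lineable, and Lemma \ref{lemF1} then gives the conclusion once we check that $A$ is stronger than $B$.

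The main step is verifying $A+B\subseteq A$, i.e. if ${\dim}G_f=s$ and ${\dim}G_g<s$ then ${\dim}G_{f+g}=s$.

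\emph{Upper box dimension.} This follows directly from Lemma \ref{lemfalc}: the dimensions differ, so $\overline{\dim}_BG_{f+g}=\max\{s,\overline{\dim}_BG_g\}=s$.

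\emph{Lower box dimension.} We argue with mesh counts. The standard estimate (as in \cite{Fal}) gives, for continuous $h$,
\[
N_\delta(G_h)\asymp \delta^{-1}+\delta^{-2}\sum_{I} O_h(I)
\]
over $\delta$-mesh intervals $I$. Combining this with the oscillation inequality $O_{f}(I)\le O_{f+g}(I)+O_g(I)$ yields
\[
N_\delta(G_f)\lesssim N_\delta(G_{f+g})+N_\delta(G_g).
\]
Since $\underline{\dim}_BG_g<s$, for small $\delta$ we have $N_\delta(G_f)\gtrsim\delta^{-s+\epsilon}$, which dominates $N_\delta(G_g)$ along the sequences where the latter is small. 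We must be careful here: the lower limit of $N_\delta(G_g)$ may be attained only along a subsequence. So for $B$ we should instead use $\overline{B}_{<s}$-type functions, i.e. $B=\{g:\overline{\dim}_BG_g<s\}$, which is still $1$-lineable and disjoint from $A$. Then $N_\delta(G_g)\le\delta^{-t}$ for all small $\delta$ with $t<s$, and both inequalities
\[
N_\delta(G_{f+g})\lesssim N_\delta(G_f)+N_\delta(G_g),\qquad N_\delta(G_f)\lesssim N_\delta(G_{f+g})+N_\delta(G_g)
\]
give $\underline{\dim}_BG_{f+g}=s$. The same choice of $B$ handles $B_s[0,1]$.

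\emph{Hausdorff dimension.} This is the expected obstacle, since Hausdorff dimension does not behave additively in general. The fix is again to choose $B$ small enough. Take $B$ to be the Lipschitz functions (or even just the span of one Lipschitz function, e.g. $x\mapsto x$); it is $1$-lineable and disjoint from $H_s[0,1]$ because $s>1$. By Lemma \ref{lemL}, ${\dim}_HG_{f+g}={\dim}_HG_f=s$ for $f\in A$ and Lipschitz $g$, so $A+B\subseteq A$. Actually this Lipschitz choice of $B$ works uniformly for all four sets, by Lemma \ref{lemx2} for the box-type dimensions. That makes the argument a one-line application of Lemma \ref{lemF1}, and it is the version I would write.
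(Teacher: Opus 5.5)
Your final argument is essentially the paper's: apply Lemma~\ref{lemF1} with $B$ a vector space of regular functions, and use Lemmas~\ref{lemL} and~\ref{lemx2} to get $A\cap B=\emptyset$ and $A+B\subseteq A$. Your version is slightly cleaner on two points. First, the case split on whether $A$ is $\alpha$-lineable replaces the paper's appeal to the $\mathfrak{c}$-lineability result of \cite{BFBS}, and handles $\alpha>\mathfrak{c}$ for free. Second, taking $B$ to be the Lipschitz functions is exactly what those two lemmas support, whereas the paper's ``differentiable'' is too large for the box-dimension sets: $x^2\sin(x^{-3})$ is differentiable on $[0,1]$, but its graph has box dimension $5/4$.
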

\begin{proof}
Let $A$ be any one of the sets involved in the corollary and  $B = \{f\in C[0,1] : f$ is differentiable$\}$. Then by Lemmas \ref{lemL} and \ref{lemx2}, we have
\[
A\cap B = \emptyset \quad \text{and} \quad A+B\subseteq A.
\]
From \cite[Theorem 2.6 ]{BFBS}, $A$ is $\mathfrak{c}$-lineable. It follows from Lemma \ref{lemF1} that $A$ is not $(\alpha,\beta)$-spaceable if $\alpha\geq\aleph_0$.
\end{proof}

\section{$(\alpha,\beta)$-spaceability of functions for Hausdorff dimension}
In this section, we investigate the $(\alpha,\beta)$-spaceability of functions in $C[0,1]$ whose graphs possess prescribed Hausdorff dimension. We first establish a restriction theorem (Theorem \ref{thm1}), which is an important tool for solving the main results of this section. It says that for any continuous function $f\in C(K)$ whose graph has large Hausdorff dimension, there exists a large subset $F\subseteq K$ such that the restriction $f|_F$ will be``better behaved" than $f$. Then by applying the restriction theorem, we will build a series of propositions in order to prove the main results of this section.

\begin{definition}
Suppose that $K \subseteq [0,1]$ is a compact set, $f \in C(K)$ and $x \in K$.
If
\[
\dim_H G_f(B(x,r) \cap K) = \dim_H G_f(K)
\]
 for any $r>0$,
then $x$ is said to be a \emph{full-dimensional point of function $f$ for Hausdorff dimension}.
In short, we call $x$ a full-dimensional point of $f$.
\end{definition}
\begin{remark}
By the stability of Hausdorff dimension, full-dimensional points of any function $f\in C(K)$ always exist.
\end{remark}
By an application of the Marstrand slicing theorem, we obtain the following restriction theorem which will be repeatedly used in this section.
 For the convenience of the reader, we recall the Marstrand slicing theorem \cite{Mars}.
\begin{lemma}Given $s\in(1,2]$, let $E$ be a planar set which is $\mathcal{H}^s$ measurable with $0<\mathcal{H}^s(E)<\infty$.
Then at $\mathcal{H}^s$-almost all points $x\in E$ the following is true: for almost all straight lines $l$ through $x$, $\mathcal{H}^{s-1}(E\cap l)<\infty$ and the Hausdorff dimension of $E\cap l$ is equal $s-1$.
\end{lemma}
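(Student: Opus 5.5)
The statement above is Marstrand's classical slicing theorem, which the paper only recalls from \cite{Mars}. If I had to prove it, I would follow the potential-theoretic route (as in Mattila's treatment) rather than Marstrand's original geometric argument. The plan is to prove both bounds for parallel families of lines, and then transfer them to lines through $\mathcal{H}^s$-almost every point by a Fubini argument. Write $\mu=\mathcal{H}^s|_E$, which is a finite Radon measure. For $\theta\in[0,\pi)$, let $\pi_\theta$ be the orthogonal projection onto the line through the origin in direction $\theta$, and let $l_{\theta,a}$ be the line perpendicular to that direction through the point $a$ of the projection line.

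\emph{Upper bound.} The Eilenberg (coarea-type) inequality gives
\[
\int^{*}\mathcal{H}^{s-1}(E\cap l_{\theta,a})\,da\le C\,\mathcal{H}^s(E)<\infty
\]
for every $\theta$. Hence, for each $\theta$, $\mathcal{H}^{s-1}(E\cap l_{\theta,a})<\infty$ for Lebesgue-a.e.\ $a$, and in particular $\dim_H(E\cap l_{\theta,a})\le s-1$.

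\emph{Lower bound.} First I would localise. Using the upper density estimate $\limsup_{r\to0}\mu(B(x,r))/r^s\le 1$, valid for $\mu$-a.e.\ $x$, and Egorov's theorem, I exhaust $\mu$ up to a null set by restrictions $\mu_j$ to compact sets $E_j$ satisfying $\mu_j(B(x,r))\le Cr^s$. Each $\mu_j$ then has finite $t$-energy $I_t(\mu_j)$ for every $t<s$. For almost every $\theta$ I define the slice measures $\mu_{j,\theta,a}$ on $E_j\cap l_{\theta,a}$ by disintegrating $\mu_j$ along $\pi_\theta$. The key estimate is Mattila's slicing inequality
\[
\int_0^\pi\!\!\int I_{t-1}(\mu_{j,\theta,a})\,da\,d\theta\le C\,I_t(\mu_j),\qquad 1<t<s .
\]
This is proved by approximating the slices with $\varepsilon^{-1}\mu_j|_{\pi_\theta^{-1}[a-\varepsilon,a+\varepsilon]}$, expanding the energy, and integrating the kernel $|x-y|^{1-t}$ over $\theta$ and $a$. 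That integration yields the factor $|x-y|^{-t}$, up to a constant depending on $t$. Since $s>1$, Marstrand's projection theorem gives $\pi_{\theta\#}\mu_j\ll\mathcal{L}^1$ for a.e.\ $\theta$, and $\int\|\mu_{j,\theta,a}\|\,da=\|\mu_j\|$. Combining these, for a.e.\ $\theta$ and $\pi_{\theta\#}\mu_j$-a.e.\ $a$ the slice is a nonzero measure with finite $(t-1)$-energy. Therefore $\dim_H(E\cap l_{\theta,a})\ge t-1$, and letting $t\uparrow s$ along a sequence gives $\dim_H(E\cap l_{\theta,a})\ge s-1$.

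\emph{Transfer to lines through points.} The line through $x$ in direction $\theta^\perp$ is exactly $l_{\theta,\pi_\theta(x)}$. Let $\mathcal{B}$ be the set of pairs $(\theta,a)$ for which either conclusion fails. The previous steps show that for a.e.\ $\theta$ the section $\mathcal{B}_\theta$ is null for $\pi_{\theta\#}\mu_j$; for the upper bound this uses the absolute continuity of the projection. Consequently
\[
(\mu_j\times d\theta)\{(x,\theta):(\theta,\pi_\theta(x))\in\mathcal{B}\}
=\int\pi_{\theta\#}\mu_j(\mathcal{B}_\theta)\,d\theta=0 .
\]
By Fubini, for $\mu_j$-a.e.\ $x$ and a.e.\ $\theta$ the line through $x$ is good, and summing over $j$ gives the result for $\mathcal{H}^s$-a.e.\ $x\in E$.

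I expect the main obstacle to be the lower bound: making the slice measures rigorous through the energy estimate, and checking the measurability of $\mathcal{B}$ needed for Fubini. The upper bound and the transfer step are comparatively routine once the projection theorem is available.
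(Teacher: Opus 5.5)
\paragraph{Comparison with the paper.} The paper does not prove this lemma; it only quotes Marstrand's slicing theorem. So any complete argument is a different route from the text. Your outline is the standard potential-theoretic proof, essentially the planar case of the slicing theorem in Mattila's \emph{Geometry of Sets and Measures in Euclidean Spaces}, and it is sound. Eilenberg's inequality gives finiteness of $\mathcal{H}^{s-1}$ on a.e.\ parallel slice, in every direction. Localising to measures $\mu_j$ with $\mu_j(B(x,r))\le Cr^s$ gives $I_t(\mu_j)<\infty$ for $t<s$, hence $I_1(\mu_j)<\infty$ and $\pi_{\theta\#}\mu_j\ll\mathcal{L}^1$ for a.e.\ $\theta$; this is exactly where $s>1$ enters. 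The energy slicing inequality then gives the lower bound on parallel slices. Finally, since the line through $x$ perpendicular to $\theta$ is $l_{\theta,\pi_\theta(x)}$, Fubini in $(x,\theta)$ converts statements about parallel families into statements about lines through $\mu_j$-a.e.\ point. Marstrand's original argument is geometric, using density estimates and coverings in thin strips and sectors. Your route needs heavier tools (projection theorem, slice measures, energies), but it is more systematic and extends essentially verbatim to $\mathbb{R}^n$ and to slices by planes of codimension $m<s$.

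\paragraph{Details to state more carefully.} None of these is a real gap.
\begin{enumerate}
\item With the paper's unnormalised $\mathcal{H}^s$, the a.e.\ density bound is $\limsup_{r\to0}\mu(B(x,r))/r^s\le 2^s$, not $1$. Only finiteness matters.
\item To know the slice is nonzero for $\pi_{\theta\#}\mu_j$-a.e.\ $a$, the integrated identity $\int\|\mu_{j,\theta,a}\|\,da=\|\mu_j\|$ is not enough. You need the pointwise fact that, with normalisation $(2\varepsilon)^{-1}$, $\|\mu_{j,\theta,a}\|$ equals the density of $\pi_{\theta\#}\mu_j$ at $a$ for $\mathcal{L}^1$-a.e.\ $a$. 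This follows from Lebesgue differentiation.
\item For Fubini, the easiest fix is to replace $\mathcal{B}$ by a Borel superset whose $\theta$-sections are still null.
\begin{enumerate}
\item \emph{Lower bound.} Let $m,M,F$ be the $\liminf$ of the masses, the $\limsup$ of the masses, and the $\liminf$ of the $(t-1)$-energies of the measures $(2\varepsilon_k)^{-1}\mu_j|_{\pi_\theta^{-1}[a-\varepsilon_k,a+\varepsilon_k]}$. These are jointly Borel in $(\theta,a)$, and Fatou still gives $\iint F\,da\,d\theta\le C I_t(\mu_j)$. Wherever $m>0$, $M<\infty$ and $F<\infty$, weak compactness and lower semicontinuity of the energy produce a nonzero measure of finite $(t-1)$-energy on $E_j\cap l_{\theta,a}$.
\item \emph{Upper bound.} Use the Borel majorant $\liminf_k\sum_i|U_i^{(k)}|^{s-1}\chi_{\pi_\theta(U_i^{(k)})}(a)$ of $\mathcal{H}^{s-1}(E\cap l_{\theta,a})$. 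Here the $U_i^{(k)}$ are open $\delta_k$-covers of $E$, as in the proof of Eilenberg's inequality.
\end{enumerate}
This also handles the fact that $E$ is only $\mathcal{H}^s$-measurable rather than Borel.
\end{enumerate}
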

\begin{theorem}[Restriction Theorem]\label{thm1}
Let $K \subseteq [0,1]$ be a compact set and $s \in (1,2]$.
Then for any $f \in H_s(K)$, there exists a compact subset $F$ of $K$ such that
\begin{enumerate}
    \item $\dim_H F = s-1$,
    \item $f|_F$ is nearly locally Lipschitz,
\end{enumerate}
where $f|_F$ denotes the restriction of $f$ to $F$.
\end{theorem}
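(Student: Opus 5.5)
Let $E=G_f(K)$, so $\dim_H E=s$. The plan is to slice $E$ by a line with a \emph{non-vertical, non-horizontal} slope and set $F$ to be the projection to the $x$-axis of the slice. The key observation is that if $l$ is a line $y=ax+b$ with $a\neq 0$ finite, then $E\cap l=\{(x,f(x)):x\in K,\ f(x)=ax+b\}$. Put $F=\{x\in K: f(x)=ax+b\}$. Then $F$ is compact, since $f$ is continuous and $K$ is compact. On $F$ the function $f$ coincides with the affine map $x\mapsto ax+b$, so $f|_F$ is Lipschitz, in particular nearly locally Lipschitz. Moreover, $x\mapsto (x,ax+b)$ is bi-Lipschitz from $F$ onto $E\cap l$, so $\dim_H F=\dim_H(E\cap l)$. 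The whole theorem therefore reduces to finding a non-vertical line $l$ with $\dim_H(E\cap l)=s-1$.

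To find such a line I would first pass to a set of finite positive measure. Three cases arise.

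\emph{Case $\mathcal H^s(E)>0$.} Then $\mathcal H^s(E)$ is positive, possibly infinite. Since $E$ is compact (in particular Borel), Davies' subset theorem gives a compact $E'\subseteq E$ with $0<\mathcal H^s(E')<\infty$. Apply the Marstrand slicing lemma to $E'$: for $\mathcal H^s$-a.e.\ $z\in E'$ and almost every direction, the line $l$ through $z$ satisfies $\dim_H(E'\cap l)=s-1$. Since almost every direction is neither vertical nor horizontal, we may choose $l$ with finite nonzero slope. Let $F'$ be the projection of $E'\cap l$, which is compact. Then $F'\subseteq F=\{x\in K: f(x)=ax+b\}$ and $\dim_H F'=s-1$, and we take $F'$ as our set. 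We get $\dim_H(E\cap l)\ge s-1$ for free. Replacing $F$ by $F'$ avoids needing the upper bound for the full slice.

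\emph{Case $\mathcal H^s(E)=0$.} Then $s$ is not attained, and this is the main obstacle. I would handle it by taking $t_k\uparrow s$ with $t_k>1$. Since $\mathcal H^{t_k}(E)=\infty$, Davies gives compact $E_k\subseteq E$ with $0<\mathcal H^{t_k}(E_k)<\infty$. Marstrand then gives compact $F_k=\pi(E_k\cap l_k)$ with $\dim_H F_k=t_k-1$ and $f|_{F_k}$ affine with some slope $a_k$.

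The remaining task is to glue the $F_k$ into a single compact set. To keep this possible, I would first localize the pieces so that they accumulate at a single point. Using the full-dimensional point remark, pick a full-dimensional point $x_0$ and apply the above inside $G_f(B(x_0,1/k)\cap K)$, which still has dimension $s$. This yields compact $F_k\subseteq B(x_0,1/k)\cap K$. Set $F=\{x_0\}\cup\bigcup_k F_k$. This set is compact because the pieces shrink to $x_0$, and $x_0\in K$ since $K$ is closed. By countable stability $\dim_H F=\sup_k(t_k-1)=s-1$.

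For the Lipschitz property, take any $x\in F\setminus\{x_0\}$, say $x\in F_k$. Since $x\neq x_0$, we have $\operatorname{dist}(x,x_0)>0$, so some ball around $x$ meets only finitely many $F_j$. Hence the one exceptional point is $x_0$, and we must show $f$ is Lipschitz on $F\cap B(x,r)$ for a small enough $r$. On that ball we have finitely many compact sets $F_j$, on each of which $f$ is affine. Since these sets may intersect or be close to one another, Lipschitz continuity across pieces is not automatic. To avoid this I would choose the $F_j$ pairwise disjoint and also disjoint from $x_0$. This can be arranged by shrinking the balls to annuli $\{1/(k+1)\le |x-x_0|\le 1/k\}$ and taking the one annulus of dimension close to $s$. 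Such an annulus exists by countable stability applied to the countably many annuli inside $B(x_0,1/k)$. Disjoint compact sets are at positive distance, so near $x$ only $F_k$ matters, and $f$ is Lipschitz there. Thus $f|_F$ is Lipschitz near every point except $x_0$, so it is nearly locally Lipschitz. The same annulus trick also disposes of the finite-measure case, so I expect the verification of the Davies and Marstrand hypotheses (measurability, and almost every line through a point having nonzero finite slope) to be the only delicate points.
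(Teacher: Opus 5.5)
Your proposal is correct and is essentially the paper's proof: you localize at a full-dimensional point $x_0$, choose separated shells accumulating at $x_0$ on which the graph has dimension tending to $s$, extract compact subsets of finite positive $\mathcal{H}^{t_k}$-measure, slice each by a non-vertical line via Marstrand, project, and take $F=\{x_0\}\cup\bigcup_k F_k$ (your separate case $\mathcal{H}^s(E)>0$ is the paper's Remark~\ref{remark1}(2)). The one point to tighten is that the annuli must be chosen inductively, distinct and non-adjacent (as the paper does with $\ell_{n+1}-\ell_n\ge 2$), because adjacent closed annuli share boundary points and the $F_k$ would then not automatically be pairwise disjoint.
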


\begin{proof}
Let $x_0$ be a full-dimensional point of $f$ (for Hausdorff dimension).
Without loss of generality, we assume that $x_0 \neq 1$ and
\[
\dim_H G_f\left(K \cap B^+(x_0, r)\right) = s
\]
for any $r>0$, where
\[
B^+(x_0, r) = \left\{ x \in \mathbb{R} : 0 \leq x - x_0 < r \right\}.
\]
Set
\[
I_n = \left[ x_0 + \frac{1-x_0}{2^n}, \ x_0 + \frac{1-x_0}{2^{n-1}} \right]
\]
for each $n \in \mathbb{N}$.
Note that $|I_n|=\frac{1-x_0}{2^n}$, where $|I_n|$ denotes the length of the interval $I_n$.
By virtue of the countable stability of the Hausdorff dimension,
there exists a sequence $\{\ell_n\}_{n \geq 1}$ of integers such that
\[
\dim_H G_f\left(K \cap I_{\ell_n}\right) = s_n \to s, \quad n \to \infty
\]
and
\[
\ell_{n+1} - \ell_n \geq 2, \quad s_n > 1 + \frac{s-1}{2}, \quad \forall n \geq 1.
\]
For each $n\in\mathbb{N}$, let $\gamma_n = s_n - \frac{s-1}{2n}>1$. By Besicovitch's theorem on finite measure subsets, there exists a compact planar set $J_n \subseteq G_f(K \cap I_{\ell_n})$ such that
\[
0 < \mathcal{H}^{\gamma_n}(J_n) < +\infty.
\]
Applying the Marstrand slicing theorem to $J_n$, there exists a non-vertical straight
line $L_n$  such that
\[
\dim_H L_n \cap J_n = \gamma_n - 1.
\]
For each $n\in\mathbb{N}$, let $K_n$ be the projection of $L_n \cap J_n$ onto the $x$-axis. Then
\[
G_f(K_n) = L_n \cap J_n.
\]
Thus we obtain a sequence of subsets of $K$, $\{K_n\}_{n\geq 1}$.
Obviously, for any $n\in\mathbb{N}$, $K_n \subseteq K \cap I_{\ell_n}$ is compact and $f$ is Lipschitz on $K_n$;
furthermore,
\[
\dim_H K_n = \dim_H G_f(K_n) = \dim_H L_n \cap J_n = \gamma_n-1
\]
and
\[
2|I_{\ell_n}|\geq\operatorname{dist}(K_n, K_{n+1}) \geq \operatorname{dist}(I_{\ell_n}, I_{\ell_{n+1}}) \geq \frac{|I_{\ell_n}|}{2} = \frac{1-x_0}{2^{\ell_n+1}},
\]
where $\operatorname{dist}$ denotes the Hausdorff metric.
Let
\[
F = \overline{\bigcup_{n \geq 1} K_n}.
\]
In fact, since $\operatorname{dist}(K_n, \{x_0\}) \to 0$ as $n \to \infty$, we have
\[
F = \bigcup_{n \geq 1} K_n \cup \{x_0\}.
\]
Then
\[
\dim_H F = \sup_{n \geq 1} \dim_H K_n = s-1.
\]
Let $x \in F \setminus \{x_0\}$. Without loss of generality, we assume that
$x \in K_{n_0}$ for some $n_0\in\mathbb{N}$, then we have that $f$ is Lipschitz on
\[
F \cap B\left(x, \frac{1-x_0}{2^{\ell_{n_0}+1}}\right) \subseteq K_{n_0}.
\]
Hence $f$ is nearly locally Lipschitz on $F$.
\end{proof}

\begin{remark}\label{remark1}
1) From the proof of Theorem \ref{thm1}, it is easy to see that if $x_0$ is a
full-dimensional point of $f\in H_s(K)$, then for any $r>0$, there exists a
compact set $K_r \subseteq B(x_0,r) \cap K$ such that $f$ is nearly locally Lipschitz on $K_r$ and
\[
\dim_H K_r = s-1.
\]

\medskip
2) Theorem \ref{thm1} reveals quite a counterintuitive phenomenon:
the larger the Hausdorff dimension of the graph of $f$, that is,
the less regular $f$ is (in the sense of dimension), the larger the
set on which $f$ can be regular in the sense of nearly locally Lipschitz
continuity. In particular, if $\dim_H G_f(K) > s$ (or $\mathcal{H}^{s}(G_f(K)) > 0$), then there
exists a compact set $K_1 \subseteq K$ such that
\[
0 < \mathcal{H}^{s}(G_f(K_1)) < \infty.
\]
In fact, applying Marstrand slicing theorem to $G_f(K_1)$,
we get that there exists a non-vertical straight line $L$ such that
\[
\dim_H G_f(K_1) \cap L = s-1.
\]
Let
\[
F = \pi(G_f(K_1) \cap L) \subseteq K,
\]
where $\pi$ stands for the projection mapping onto the $x$-axis for planar sets.
Thus we get that
\[
\dim_H F = s-1, \quad \text{and} \quad f \text{ is Lipschitz on } F.
\]

\medskip
3) It remains unclear whether the present theorem is sharp, that is,
whether there exist a function $f \in H_{<s}(K)$ such that for any
$F \subseteq K$ with $\dim_H F = s-1$, $f$ is not nearly locally Lipschitz on $F$.
\end{remark}

\begin{definition}
Let $K \subseteq [0,1]$ be a compact set and let $f \in C(K)$, $x_0 \in K$, $\beta \in (0,+\infty)$. Set
\[
V_\beta = \left\{ (x,y) \in \mathbb{R}^2 :x\neq x_0,\ \left| \frac{y - f(x_0)}{x - x_0} \right| \leq \beta \right\}.
\]
If
\[
\dim_H \left( V_\beta \cap G_f(K \cap B(x_0,r)) \right) = \dim_H G_f(K),
\]
 for any $r>0$, we call $x_0$ a $\beta$-regular full-dimensional point of $f$.
If a function $f$ admits a $\beta$-regular full-dimensional point, we say that $f$ is a $\beta$-regular full-dimensional function. More generally, a function $f$ is said to be regular full-dimensional if there exists some $\beta \in (0,+\infty)$ such that $f$ is $\beta$-regular full-dimensional.
\end{definition}

The following theorem states that if $f$ is regular full-dimensional, then $f$ is Lipschitz on a larger subset.
\begin{theorem}
Let $s \in (1,2]$ and let $K \subseteq [0,1]$ be a compact set. If $f \in H_s(K)$ is a regular full-dimensional function, then there exists a compact subset $F$ of $K$ such that
\begin{enumerate}
    \item $\dim_H F = s-1$;
    \item $f$ is Lipschitz on $F$.
\end{enumerate}
\end{theorem}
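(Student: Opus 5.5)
Let $x_0$ be a $\beta$-regular full-dimensional point of $f$. The plan is to rerun the proof of Theorem \ref{thm1} with the graph pieces taken inside the cone $V_\beta$. Then each slice set $K_n$ will satisfy two things: $f$ is Lipschitz on $K_n$ with a constant controlled by $\beta$, and $|f(x)-f(x_0)|\le\beta|x-x_0|$ on $K_n$. These two facts should let us glue the pieces into one Lipschitz set. The one real modification is to force all the slicing lines $L_n$ to have slopes bounded by a fixed constant, say $|{\rm slope}(L_n)|\le 1$. This is possible because the Marstrand slicing theorem gives the conclusion for \emph{almost all} directions through almost every point, so a direction set of positive measure can be chosen.

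First, using the definition with the one-sided reduction as in Theorem \ref{thm1}, assume $\dim_H(V_\beta\cap G_f(K\cap B^+(x_0,r)))=s$ for all $r>0$. Take dyadic shells $I_n$ as before. By countable stability, choose $\ell_n$ with $\ell_{n+1}-\ell_n\ge 2$ and
$$\dim_H\bigl(V_\beta\cap G_f(K\cap I_{\ell_n})\bigr)=s_n\to s.$$
Pick compact $J_n\subseteq V_\beta\cap G_f(K\cap I_{\ell_n})$ with $0<\mathcal H^{\gamma_n}(J_n)<\infty$. This needs a closedness adjustment: $V_\beta\cap G_f(K\cap I_{\ell_n})$ is compact because $x_0\notin I_{\ell_n}$. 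Apply Marstrand's theorem to get a line $L_n$ with slope in $[-1,1]$ and $\dim_H(L_n\cap J_n)=\gamma_n-1$. Set $K_n=\pi(L_n\cap J_n)$ and $F=\bigcup_n K_n\cup\{x_0\}$. Then $\dim_H F=s-1$ and $F$ is compact, exactly as before.

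The key step is the Lipschitz estimate on $F$, split into three cases. For $x,y\in K_n$ we have $|f(x)-f(y)|\le|x-y|$, since both graph points lie on $L_n$. For $x\in K_n$ and $y=x_0$ we have $|f(x)-f(x_0)|\le\beta|x-x_0|$, since the graph point lies in $V_\beta$. For $x\in K_n$ and $y\in K_m$ with $m>n$, the triangle inequality through $x_0$ gives
$$|f(x)-f(y)|\le\beta(|x-x_0|+|y-x_0|)\le 2\beta|x-x_0|.$$
The gap condition $\ell_{m}\ge\ell_n+2$ gives $|x-y|\ge|x-x_0|-|y-x_0|\ge c|x-x_0|$ for an absolute constant $c>0$, for instance $c=1/2$, because $y$ lies in a shell at least two dyadic levels closer to $x_0$. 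Hence $|f(x)-f(y)|\le(4\beta)|x-y|$, and $f$ is Lipschitz on $F$ with constant $\max\{1,4\beta\}$.

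I expect the main obstacle to be the justification that the Marstrand lines can be chosen with uniformly bounded slope, and that $J_n$ can be taken compact inside the cone. The first follows from "almost all lines through $x$" being a full-measure set of directions, so it meets the positive-measure set of slopes in $[-1,1]$. For the second, the cone intersected with the graph over the shell is compact, so Besicovitch's theorem (applied with $\gamma_n<s_n$) supplies a compact subset of positive finite measure.
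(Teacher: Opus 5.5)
Your proposal is correct and follows essentially the same route as the paper: a one-sided reduction to the cone, dyadic shells with gaps $\ell_{n+1}-\ell_n\ge 2$, Besicovitch plus Marstrand with slicing lines of bounded slope (the paper takes slopes in $(0,1)$), and the triangle inequality through $x_0$ for points in different shells. Your explicit check that $V_\beta\cap G_f(K\cap I_{\ell_n})$ is compact and your direct treatment of the point $x_0$ (rather than passing to the closure) add a little rigor. Your constant $\max\{1,4\beta\}$ is also slightly sharper than the paper's $6\beta$.
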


\begin{proof}
By the hypotheses of the theorem, there exists $\beta \in (0,+\infty)$ and $x_0 \in K$ such that $x_0$ is a $\beta$-regular full-dimensional point of $f$. We assume that $x_0 \neq 1$ (otherwise, we consider the opposite triangular region in the following).
By virtue of the stability of Hausdorff dimension, we may assume that for any $r>0$,
\begin{equation}\label{tm2eq1}
\dim_H \left( V_\beta^+ \cap G_f(K \cap B(x_0,r)) \right) = \dim_H G_f(K) = s,
\end{equation}
where
\[
V_\beta^+ = \left\{ (x,y) \in \mathbb{R}^2 : x > x_0, \ \left| \frac{y - f(x_0)}{x - x_0} \right| \leq \beta \right\}.
\]

For each $n \in \mathbb{N}$, let
\[
I_n = \left[ x_0 + \frac{1-x_0}{2^n},\ x_0 + \frac{1-x_0}{2^{n-1}} \right]
\quad \text{and} \quad
G_n = G_f(I_n\cap K) \cap V_\beta^+.
\]
Then by Equation (\ref{tm2eq1}), there exists a strictly increasing sequence of integers $\{\ell_k\}_{k\geq 1}$ such that
\[
\dim_H G_{\ell_k} = s_k \to s \quad \text{as } k \to \infty,
\]
and
\[
s_k > 1, \quad  \ell_{k+1} - \ell_k \geq 2
\]
for any $k\in\mathbb{N}$.
Let $k_0 \in \mathbb{N}$ be sufficiently large so that
\[
\gamma_k = s_k - \frac{1}{k} > 1
\]
for all $k \geq k_0$. Note that $\gamma_k \to s$ as $k \to \infty$. By Besicovitch's theorem, for each $k \geq k_0$, there exists a compact subset $J_k \subseteq G_{\ell_k}$ such that
\[
0 < \mathcal{H}^{\gamma_k}(J_k) < +\infty.
\]
Further applying the Marstrand slicing theorem, there exists a line $L_k$ with slope $t_k \in (0,1)$ such that
\[
\dim_H (L_k \cap J_k) = \gamma_k - 1.
\]
Let $F_k = \pi(L_k \cap J_k)$, for each $k \geq k_0$, i.e., $F_k$ is the projection of $L_k \cap J_k$ onto the $x$-axis. Then
\[
G_f(F_k) = L_k \cap J_k.
\]
Thus,
\[
\dim_H F_k = \dim_H (L_k \cap J_k) = \gamma_k - 1.
\]
Note that $F_k \subseteq I_{\ell_k}\cap K\subseteq I_{\ell_k}$. We define the compact set $F$ as
\[
F = \overline{\bigcup_{k \geq k_0} F_k}.
\]
In fact,
\[
F = \bigcup_{k \geq k_0} F_k \cup \{x_0\},
\]
and therefore
\[
\dim_H F = \sup_{k \geq k_0} \dim_H F_k = s - 1.
\]

Next, we prove that $f$ is Lipschitz on $F$. It suffices to prove that $f$ is Lipschitz on $\bigcup_{k \geq k_0} F_k$.
Let $x,y \in \bigcup_{k \geq k_0} F_k$ be arbitrary.
If $x,y \in F_{n_0}$ for some $n_0 \geq k_0$, by the definition of $F_{n_0}$, we have
\[
|f(x) - f(y)| \leq t_{n_0} |x-y| < |x-y|.
\]
If
\[
x \in F_j \subseteq I_{\ell_j}, \quad y \in F_i \subseteq I_{\ell_i}
\]
 for some $j > i \geq k_0$,
then
\[
|x-y| \geq \frac{1}{2} |I_{\ell_i}| \quad (\text{note that } \ell_j - \ell_i \geq 2).
\]
Furthermore, we have
\[
\begin{aligned}
|f(x) - f(y)| &\leq |f(x) - f(x_0)| + |f(y) - f(x_0)| \\
&\leq \beta \left(|x-x_0| + |y-x_0|\right) \\
&\leq 3\beta |I_{\ell_i}| < 6\beta |x-y|.
\end{aligned}
\]
This proves that $f$ is Lipschitz on $\bigcup_{k \geq k_0} F_k$.
Since $F = \overline{\bigcup_{k \geq k_0} F_k}$, $f$ is Lipschitz on $F$.
\end{proof}

\begin{corollary}\label{cor1}
Let $s \in (1,2]$ and let $K \subseteq[0,1]$ be a compact set.
If $f \in H_s(K)$ has infinitely many full-dimensional points, then there exists a sequence of compact subsets $\{K_j\}_{j \geq 1}$ satisfying the following properties:
\begin{enumerate}
    \item $\dim_H K_j = s-1, \ \forall j \geq 1$;
    \item $\mathrm{conv}(K_i) \cap \mathrm{conv}(K_j) = \emptyset, \ \forall i \neq j$;
    \item $f$ is nearly locally Lipschitz on each $K_j$.
\end{enumerate}
Furthermore,
\[
f\Big|_{K \setminus \bigcup_{j \geq 1} K_j} \in H_s\Big(K \setminus \bigcup_{j \geq 1} K_j\Big),
\]
that is, $\dim_HG_f(K\setminus\bigcup_{j\geq 1}K_j)=s$,
where $\mathrm{conv}(A)$ denotes the convex hull of the set $A$.
\end{corollary}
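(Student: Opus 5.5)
The plan is to localize Remark \ref{remark1} 1) around infinitely many full-dimensional points placed in pairwise disjoint intervals. Then I would use countable stability of Hausdorff dimension to show that removing the resulting sets does not lower the dimension of the graph.

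\emph{Step 1 (choice of centers and radii).} By hypothesis, $f$ has infinitely many distinct full-dimensional points. Since $K$ is compact, I extract a sequence $\{x_j\}_{j\ge1}$ of distinct full-dimensional points converging to some $x^*\in K$. Passing to a subsequence, I may assume $x_j\neq x^*$ for all $j$, that all $x_j$ lie on the same side of $x^*$, and that $|x_j-x^*|$ is strictly decreasing. Each $x_j$ then has positive distance to every other $x_i$. I choose radii
\[
0<r_j<\tfrac13\min\{|x_j-x_{j-1}|,\,|x_{j+1}-x_j|\}
\]
(with the obvious modification for $j=1$). With this choice the open intervals $B(x_j,r_j)$ are pairwise disjoint. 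I expect this bookkeeping to be the only mildly delicate point, and the monotone arrangement handles it.

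\emph{Step 2 (the sets $K_j$).} I apply Remark \ref{remark1} 1) to the full-dimensional point $x_j$ with radius $r_j$. This gives a compact set $K_j\subseteq B(x_j,r_j)\cap K$ with $\dim_H K_j=s-1$ on which $f$ is nearly locally Lipschitz. This yields properties (1) and (3). Since $\mathrm{conv}(K_j)\subseteq B(x_j,r_j)$ and these intervals are pairwise disjoint, property (2) follows.

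\emph{Step 3 (the remainder keeps dimension $s$).} Because $f$ is nearly locally Lipschitz on $K_j$, Lemma \ref{lemL} applied with $g=0$ shows that the graph has the same dimension as the domain. More directly, the graph map $x\mapsto (x,f(x))$ is bi-Lipschitz on each of the countably many local Lipschitz pieces, and the exceptional countable set contributes nothing. Hence
\[
\dim_H G_f(K_j)=\dim_H K_j=s-1.
\]
By countable stability, $\dim_H G_f\big(\bigcup_j K_j\big)=s-1<s$. Now write
\[
G_f(K)=G_f\Big(K\setminus\bigcup_j K_j\Big)\cup G_f\Big(\bigcup_j K_j\Big).
\]
Finite stability gives $s=\max\{\dim_H G_f(K\setminus\bigcup_j K_j),\,s-1\}$, which forces $\dim_H G_f(K\setminus\bigcup_j K_j)=s$. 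This is exactly the final assertion. The remainder set need not be compact, but the definition of $H_s(\cdot)$ does not require compactness.
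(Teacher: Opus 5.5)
Your proposal is correct and follows essentially the same route as the paper: pairwise disjoint balls around distinct full-dimensional points, with $K_j$ obtained from Remark~\ref{remark1}~1). The paper takes $K_j = F\cap \overline{B}(x_j,r_j/2)$, with $F$ from the proof of Theorem~\ref{thm1}. You are more careful than the paper in two places: the paper does not justify that the balls can be chosen pairwise disjoint (your monotone subsequence avoiding the accumulation point handles this), and it does not spell out the countable-stability argument giving $\dim_H G_f\big(K\setminus\bigcup_j K_j\big)=s$.
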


\medskip
\begin{proof} Suppose that $\{x_i\}_{i \geq 1}$ is a sequence of pairwise distinct full-dimensional points of $f$. Take $r_i>0$ such that
\[
B(x_i, r_i) \cap B(x_j, r_j) = \emptyset.
\]
By the Remark \ref{remark1}, it suffices to take
\[
K_i = F \cap \overline{B}\Big(x_i, \frac{r_i}{2}\Big),
\]
where the set  $F$ is constructed in Theorem \ref{thm1}.
\end{proof}

\medskip
The following proposition is of crucial importance for the proof of our main result, Theorem \ref{thmn1} in this section.
\begin{proposition}\label{prop1}
Let $s \in (1,2]$ and let $K \subseteq [0,1]$ be a compact set with $\mathcal{H}^{s-1}(K) > 0$.
Let $f, g \in H_s(K)$ be linearly independent and satisfy $\operatorname{span}\{f,g\} \subseteq H_s(K) \cup \{0\}$.
Then there exists a sequence of compact subsets $\{K_j\}_{j\ge1}$ with the following properties:
\begin{enumerate}
    \item $\dim_H K_j = s-1$, $\forall j\ge1$;
    \item $\operatorname{conv}(K_i) \cap \operatorname{conv}(K_j) = \emptyset$, $\forall i\ne j$;
    \item for any $(a,b) \in \mathbb{R}^2 \setminus \{(0,0)\}$, we have
    \[
    (af+bg) \Big|_{K \setminus \bigcup_{j\ge1} K_j} \in H_s\Big(K \setminus \bigcup_{j\ge1} K_j\Big).
    \]
\end{enumerate}
\end{proposition}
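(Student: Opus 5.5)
The plan is to reduce property (3) to a single estimate that holds simultaneously for every $(a,b)\neq(0,0)$. To do this I would work with the space curve
\[
\Gamma(x)=(x,f(x),g(x)),\qquad x\in K.
\]
For $h=af+bg$ the graph $G_h(E)$ is the image of $\Gamma(E)$ under the linear (hence Lipschitz) map $(x,y,z)\mapsto(x,ay+bz)$, so $\dim_H G_h(E)\le\dim_H\Gamma(E)$ for every $E\subseteq K$ and every $(a,b)$. Suppose we can find compact sets $K_j\subseteq K$ with $\dim_H K_j=s-1$, pairwise disjoint convex hulls, and
\[
\sup_{j}\dim_H\Gamma(K_j)\le t<s .
\]
Then for every $(a,b)\neq(0,0)$, countable stability gives $\dim_H G_h(\bigcup_j K_j)\le t<s$. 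On the other hand $\dim_H G_h(K)=s$ by the hypothesis $\operatorname{span}\{f,g\}\subseteq H_s(K)\cup\{0\}$. Since
\[
G_h(K)=G_h\Big(\bigcup_j K_j\Big)\cup G_h\Big(K\setminus\bigcup_j K_j\Big),
\]
stability forces $\dim_H G_h(K\setminus\bigcup_jK_j)=s$, which is (3). The point of this route is that it avoids any compactness argument over the circle of directions $(a,b)$, which would fail because $\dim_H G_{af+bg}$ need not be continuous in $(a,b)$.

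The construction of the $K_j$ goes in three steps. First, since $\mathcal{H}^{s-1}(K)>0$, I choose a point $q\in K$ with $\mathcal{H}^{s-1}(K\cap B(q,r))>0$ for all $r>0$. I then choose disjoint closed intervals $I_j$ accumulating at $q$, with pairwise disjoint convex hulls, such that $\mathcal{H}^{s-1}(K\cap I_j)>0$. Second, inside each $K\cap I_j$ I look for compact sets $C_{j,m}$ with $\dim_H C_{j,m}\ge s-1-\frac1m$ on which $\Gamma$ is Lipschitz, or at least H\"older with an exponent controlled so that $\dim_H\Gamma(C_{j,m})\le t$. Third, I take $K_j$ to be a suitable closure of $\bigcup_m C_{j,m}$ inside $I_j$, arranged as in the proof of Theorem \ref{thm1}, with the pieces $C_{j,m}$ converging to a single point, so that $\dim_HK_j=s-1$. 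Conditions (1) and (2) then hold by construction.

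The main obstacle is the second step: controlling $\Gamma$, that is $f$ and $g$ simultaneously, on a large subset. Theorem \ref{thm1} handles one function at a time via Marstrand slicing of $G_f$. I would first try to iterate it. First, apply Remark \ref{remark1} to $f$ to get $E\subseteq K\cap I_j$ with $\dim_HE$ close to $s-1$ and $f$ Lipschitz on pieces of $E$. Then, on $E$, the graph of $af+bg$ differs from that of $bg$ by a Lipschitz perturbation, by Lemma \ref{lemL}. If $\dim_H G_g(E)$ is also large, I would slice $G_g(E)$ by Marstrand again to make $g$ Lipschitz on a further subset; if it is small, $\Gamma(E)$ already has small dimension. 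The dimension loss in the second slicing is exactly where I expect difficulty. If iteration fails, the fallback is to drop the requirement $\dim_H\Gamma(K_j)<s$ and instead choose the intervals $I_j$ so thin, relative to a full-dimensional point of each $h$, that no $G_h(K_j)$ captures all of the dimension. I expect this fallback would need a more delicate case analysis on where the full-dimensional points of $af+bg$ lie as $(a,b)$ varies.
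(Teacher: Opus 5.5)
Your reduction to a uniform bound on $\dim_H\Gamma(K_j)$ is a valid sufficient condition. But the proposal stops exactly where the proof has to happen: the construction of the $K_j$ is not carried out, and as sketched it cannot be.

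The step you flag as the main obstacle is actually harmless. Suppose $E$ is compact, $\dim_H E=s-1$, and $f$ is nearly locally Lipschitz on $E$. If $\dim_H G_g(E)<s$, then $\dim_H\Gamma(E)=\dim_H G_g(E)<s$. If $\dim_H G_g(E)=s$, then Theorem \ref{thm1} applied to $g|_E$ gives a compact $F\subseteq E$ with $\dim_H F=s-1$ \emph{exactly} on which both functions are nearly locally Lipschitz, so $\dim_H\Gamma(F)=s-1$. This is the iteration in Claim \ref{claim}.

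The real problem is where you localize. Theorem \ref{thm1} and Remark \ref{remark1} produce sets of dimension $s-1$ only near full-dimensional points, since Marstrand slicing of a graph of dimension $\sigma$ yields sets of dimension $\sigma-1$. Near an arbitrary $\mathcal{H}^{s-1}$-density point $q$ you may have $\dim_H G_{af+bg}(K\cap I_j)<s$ for every $(a,b)$. This is precisely the paper's case b1), where the set $P_{\{f,g\}}$ of full-dimensional points of all nonzero combinations is finite and $I_j$ avoids it. There, slicing only gives sets of dimension $<s-1$, and nothing in your plan controls $\dim_H\Gamma(K\cap I_j)$, which is not governed by the individual graph dimensions. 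Yet in that case property (3) is immediate from countable stability applied to each combination separately: $\dim_H G_{af+bg}(\bigcup_j K_j)\le\dim_H G_{af+bg}(F)<s$. Passing to $\Gamma$ throws away exactly this information.

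There is a second gap even when you can work near full-dimensional points, e.g.\ $f$ has infinitely many of them and you start from the sets $K_j^{(1)}$ of Corollary \ref{cor1}. Building each $K_j$ separately gives only $\dim_H\Gamma(K_j)<s$ for each $j$. If $\dim_H G_g(K_j^{(1)})<s$ for all $j$ but these values tend to $s$, then $\sup_j\dim_H\Gamma(K_j)=s$ and your reduction yields nothing. Further slicing of $G_g(K_j^{(1)})$ only produces sets of dimension $\dim_H G_g(K_j^{(1)})-1<s-1$, so it cannot repair this.

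The missing ideas are the two the paper uses.
\begin{enumerate}
\item A case split on full-dimensional points. Either some element of the span has infinitely many of them, or $P_{\{f,g\}}$ is finite, or $P_{\{f,g\}}$ is infinite. In the last case, choose full-dimensional points $x_n$ of $a_nf+b_ng$ with pairwise independent directions $(a_n,b_n)$, so that each fixed $af+bg$ is exceptional on at most one piece $F_n$.
\item When no uniform smallness is available, a ``reservoir'' argument. Keep a set $R$ on which the relevant combinations still have graph dimension $s$: for instance $\bigcup_j K_j^{(1)}$ in case a2), which serves all $af+bg$ with $b\neq0$, or $\bigcup_n F_n$ in case b2). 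Supplement it by a second set ensuring $f$ keeps dimension $s$ if needed. Then place the $K_j$ in the complement with no smallness requirement at all.
\end{enumerate}
Your fallback points in this direction but supplies none of it.
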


\begin{proof}
We will prove it by considering the following two cases.

\medskip
\noindent \textbf{Case 1.} At least one of $f$ and $g$ has infinitely many full-dimensional points.

Without loss of generality, assume that $f$ has infinitely many full-dimensional points.
From Corollary \ref{cor1}, we may choose a sequence of compact subsets $\{K_j^{(1)}\}_{j\ge1}$ satisfying the properties $(1)$ and $(2)$ of this proposition. Additionally, $f$ is nearly locally Lipschitz on each $K_j^{(1)}$. We also assume that $\mathcal{H}^{s-1}\big(K \setminus \bigcup_{j\ge1} K_j^{(1)}\big) > 0$; if this fails,  we may instead work with an appropriate compact subset of $K_j^{(1)}$ (for some $j$) with Hausdorff dimension $s-1$ and positive $\mathcal{H}^{s-1}$-measure

\medskip
\noindent\textbf{a1)} Suppose $\dim_H G_g\big(\bigcup_{j\ge1} K_j^{(1)}\big) < s$.

Obviously, by Lemma \ref{lemL}, we have
\[
\begin{aligned}
\dim_H G_{af+bg}\Big(\bigcup_{j\ge1} K_j^{(1)}\Big) &=\sup_{j\geq1}\dim_H G_{af+bg} (K^{(1)}_j) \\
&=\sup_{j\geq1}\dim_H G_{bg} (K^{(1)}_j)\leq\sup_{j\geq1}\dim_H G_{g} (K^{(1)}_j) \\
&=\dim_H G_g\Big(\bigcup_{j\ge1} K_j^{(1)}\Big)< s
\end{aligned}
\]
for any $(a,b) \in \mathbb{R}^2$.
Hence we have
\[
\dim_H G_{af+bg}\Big(K \setminus \bigcup_{j\ge1} K_j^{(1)}\Big) = s, \quad \forall (a,b) \in \mathbb{R}^2 \setminus \{(0,0)\}.
\]
That is, $\{K_j^{(1)}\}_{j\ge1}$ satisfies all the properties of the proposition. At this stage, just set $K_j = K_j^{(1)}$ for each $j\geq1$.

\medskip
\noindent \textbf{a2)} Suppose $\dim_H G_g\big(\bigcup_{j\ge1} K_j^{(1)}\big) = s$.

Then we have
\begin{equation}\label{propeq1}
\dim_H G_{af+bg}\Big(\bigcup_{j\ge1} K_j^{(1)}\Big)
= \dim_H G_g\Big(\bigcup_{j\ge1} K_j^{(1)}\Big) = s
\end{equation}
for any $(a,b) \in \mathbb{R}^2$ with $b \ne 0$.
Since $\mathcal{H}^{s}\big(K \setminus \bigcup_{j\ge1} K_j^{(1)}\big) > 0$, there exists a sequence $\{K_i^{(2)}\}_{i\ge1}$ of compact sets satisfying the following properties:
\[
K_i^{(2)} \subseteq K \setminus \bigcup_{j\ge1} K_j^{(1)},\ \ \dim_H K_i^{(2)} = s-1
\]
for all $i\ge1$, and
\[
\operatorname{conv}(K_i^{(2)}) \cap \operatorname{conv}(K_j^{(2)}) = \emptyset, \ \forall i\ne j,
\]
\[
\mathcal{H}^{s-1}\Big(K \setminus \Big(\bigcup_{j\ge1} K_j^{(1)} \cup \bigcup_{i\ge1} K_i^{(2)}\Big)\Big) > 0.
\]

If $\dim_H G_f\big(\bigcup_{i\ge1} K_i^{(2)}\big) < s$, then from the fact that $f$ is nearly locally Lipschitz on each $K_j^{(1)}$, we get
\[
\dim_H G_f\Big(\bigcup_{j\ge1} K_j^{(1)}\Big)
= \sup_{j\ge1} \dim_H G_f(K_j^{(1)}) = s-1 < s.
\]
Therefore,
\[
\dim_H G_f\Big(K \setminus \Big(\bigcup_{j\ge1} K_j^{(1)} \cup \bigcup_{i\ge1} K_i^{(2)}\Big)\Big) = s.
\]
Combining these with Equation (\ref{propeq1}), we get that
\[
\dim_H G_{af+bg}\Big(K \setminus \bigcup_{j\ge1} K_j^{(2)}\Big) = s
\]
for any $(a,b) \in \mathbb{R}^2 \setminus \{(0,0)\}$.
At this point, set $K_j = K_j^{(2)}$ for each $j$. Then $\{K_j\}_{j\geq 1}$ satisfies all the properties of this proposition.

 If $\dim_H G_f\big(\bigcup_{i\ge1} K_i^{(2)}\big) = s$, then take any sequence $\{K_j\}_{j\ge1}$ of compact sets such that
 \[K_j \subseteq K \setminus \Big(\bigcup_{j\ge1} K_j^{(1)} \cup \bigcup_{i\ge1} K_i^{(2)}\Big),\]
\[\dim_H K_j = s-1 \]
for all $j\geq1$ and
\[\operatorname{conv}(K_i) \cap \operatorname{conv}(K_j)=\emptyset, \ \forall i\neq j.\]
It is easy to see that $\{K_j\}_{j\geq 1}$ also satisfies property $(3)$ of the proposition.

\medskip
\noindent \textbf{Case 2.} Both $f$ and $g$ have finitely many full-dimensional points.

\medskip
Define $P_{\{f,g\}}$ as follows:
\[
P_{\{f,g\}} = \left\{ x \in K : \exists (a,b)\neq(0,0)\ \text{s.t. } x \text{ is a full-dimensional point of } af+bg \right\},
\]
that is, $P_{\{f,g\}}$ is the set of full-dimensional points of functions belonging to $\operatorname{span}\{f,g\} \setminus \{0\}$.

\medskip
\noindent\textbf{b1)} Suppose $\# P_{\{f,g\}} < \infty$.

In this case, without loss of generality, write $P_{\{f,g\}}=\{0<x_1 < x_2 < \cdots < x_n<1\}$.
Let $I_k$ be the open interval $(x_{k-1},x_k)$, $k=1,2,\dots,n+1$, where $x_0=0$, $x_{n+1}=1$.
Since $\mathcal{H}^{s-1}(K) > 0$, there exists $ k_0$ such that
\[
\mathcal{H}^{s-1}(K \cap I_{k_0}) > 0.
\]
Take a compact subset $F$ of $K \cap I_{k_0}$ with $0 < \mathcal{H}^{s-1}(F) < \infty$. Then
\[
 \dim_H G_{af+bg}(F) < s
\]
for any $(a,b) \in \mathbb{R}^2 \setminus \{(0,0)\}$.
(Otherwise, $P_{\{f,g\}} \cap F \ne \emptyset$, which leads to a contradiction.)
Hence we can take a sequence $\{K_j\}_{j\geq 1}$ of compact subsets of $F$ satisfying the following conditions:
\[
\dim_H K_j = s-1, \ \forall j\ge1
\]
and
\[
\operatorname{conv}(K_i) \cap \operatorname{conv}(K_j) = \emptyset, \ \forall i\ne j.
\]
Then $\{K_j\}_{j\ge1}$ is exactly the required sequence.

\medskip
\noindent\textbf{ b2)} Suppose $\# P_{\{f,g\}} = +\infty$.

Take a sequence of mutually distinct points $\{x_n\}_{n\ge1} \subseteq P_{\{f,g\}}$. For each $n\ge1$,
choose $(a_n,b_n) \in \mathbb{R}^2 \setminus \{(0,0)\}$ such that $x_n$ is a full-dimensional point of $a_n f + b_n g$.
Without loss of generality, by taking a subsequence if necessary, we may assume that $\{(a_n,b_n)\}_{n\ge1}$ is pairwise linearly independent. If this were not possible, there would exist some $(a',b') \in \mathbb{R}^2 \setminus \{(0,0)\}$ such that $a'f+b'g$ has infinitely many full-dimensional points, then considering the pairs $\{a'f+b'g, g\}$ or $\{a'f+b'g, f\}$ reduces us back to Case 1.
We may also assume that $a_n \ne 0$, $\forall n\ge1$ (or, symmetrically, $b_n \ne 0$, $\forall n\ge1$) by passing to a further subsequence.

For each $n \in \mathbb{N}$, take $r_n>0$ such that the family $\{B(x_n,r_n)\}_{n\ge1}$ is pairwise disjoint.
By Remark \ref{remark1}, there exists a compact set $F_n \subseteq B(x_n,r_n)\cap K$ satisfying
\begin{equation}\label{propeq2}
\dim_H F_n = s-1, \quad \mathcal{H}^{s-1}\Big(K \setminus \bigcup_{n\ge1} F_n\Big) > 0,
\end{equation}
and such that $a_n f + b_n g$ is nearly locally Lipschitz on $F_n$.
Since $a_n \ne 0$ for any $n\in\mathbb{N}$, we have the decomposition
\[
af + bg = \frac{a}{a_n}(a_n f + b_n g) + \Big(b - \frac{b_n}{a_n}a\Big)g.
\]
Consequently,
\begin{equation}\label{propeq3}
\dim_H G_{af+bg}(F_n) =
\begin{cases}
\dim_H G_g(F_n), & a b_n-b a_n  \ne 0, \\
s-1, & a b_n-b a_n = 0
\end{cases}
\end{equation}
for any $(a,b)\in\mathbb{R}^2$.
If $\dim_H G_g\big(\bigcup_{n\ge1} F_n\big) < s$, then for any given $(a,b) \in \mathbb{R}^2$, we have
\[
\dim_H G_{af+bg}\Big(\bigcup_{n\ge1} F_n\Big)
= \sup_{n\ge1} G_{af+bg}(F_n) < s.
\]
This further yields
\[
\dim_H G_{af+bg}\Big(K \setminus \bigcup_{n\ge1} F_n\Big) = s
\]
for any $(a,b)\in \mathbb{R}^2 \setminus \{(0,0)\}$.
Combining this with Equation (\ref{propeq2}) and disjointness of the balls $B(x_n,r_n)$, it suffices to set $K_j=F_j$, for all $j\in\mathbb{N}$.

If $\dim_H G_g\big(\bigcup_{n\ge1}F_n\big)=s$,
we may assume without loss of generality that $\dim_H G_g(F_n) < s$ for any $n\in\mathbb{N}$. Otherwise, suppose $\dim_H G_g(F_n)=s$ for some integer $n$. By Theorem \ref{thm1}, we consider the restriction $g|_{F_n}$, which yields a compact subset $F_n'\subseteq F_n$ such that $g$ is nearly locally Lipschitz on $F'_n$  and $\dim_H F'_n=s-1$. Consequently, $\dim_H G_g(F'_n) < s$. If $\dim_H G_g(F_n)<s$ already holds, we simply set $F_n'=F_n$ and revisit the value of $\dim_H G_g\big(\bigcup_{n\ge1}F_n'\big)$). Since $\{(a_n,b_n)\}_{n\ge1}$ is pairwise linearly independent, then for any given $(a,b)\in \mathbb{R}^2\setminus\{(0,0)\}$, the inequality
\[
 ab_n -ba_n \ne 0
\]
hols for all but at most one index $n$ (denoted by $n_0$ if such an index exists).
Therefore, in view of Equation (\ref{propeq3}),
\[
\begin{aligned}
\dim_H G_{af+bg}\Big(\bigcup_{n\ge1} F_n\Big)
&= \sup_n \dim_H G_{af+bg}(F_n) \\
&= \max\Big\{s-1, \sup_{n\ne n_0} \dim_H G_g(F_n)\Big\} \quad (\text{if } n_0 \text{ exists}) \\
&= \dim_H G_g\big(\bigcup_{n\ge1} F_n\big) = s
\end{aligned}
\]
for any $(a,b)\in \mathbb{R}^2\setminus\{(0,0)\}$.
Since $\mathcal{H}^{s-1}\big(K \setminus \bigcup_{n\ge1} F_n\big) > 0$, we may select a sequence of compact sets $\{K_j\}_{j\ge1}$ with the following properties:
\[
K_j \subseteq K \setminus \bigcup_{n\ge1} F_n,
\]
\[
\dim_H K_j = s-1,
\]
for any $ j\ge1$ and
\[
\operatorname{conv}(K_i) \cap \operatorname{conv}(K_j) = \emptyset, \ \forall i\ne j.
\]
It follows immediately that
\[
\dim_H G_{af+bg}\Big(K \setminus \bigcup_{j\ge1} K_j\Big)
\ge \dim_H G_{af+bg}\Big(\bigcup_{j\ge1} F_j\Big) = s
\]
holds for any $(a,b) \in \mathbb{R}^2 \setminus \{(0,0)\}$.
\end{proof}

\medskip
The following proposition plays a crucial role in the proof of our main result, Theorem \ref{thm2} presented in this section.
\begin{proposition}\label{prop2}
Let $s\in(1,2]$, $n\in\mathbb{N}$, and let $K\subseteq[0,1]$ be a compact set with $\mathcal{H}^{s-1}(K) > 0$. Suppose that $\{f_i\}_{i=1}^n\subseteq H_s(K)$ is linearly independent and
$\operatorname{span}\{f_i\}^n_{i=1}\subseteq H_s(K)\cup\{0\}$. Then for any $m\in\mathbb{N}$, there exist compact subsets $\{K_j\}_{j=1}^m$ satisfying the following properties:
\begin{enumerate}
\item $\dim_H K_j=s-1$, $\forall$ $j=1,2,\dots,m$;
\item $\operatorname{conv}(K_i)\cap \operatorname{conv}(K_j)=\emptyset$, whenever $i\neq j$;
\item for any $(a_1,\dots,a_n)\in\mathbb{R}^n\setminus\{(0,\dots,0)\}$, we have
\[
\left.\sum_{i=1}^n a_i f_i\right|_{K\setminus\bigcup_{j=1}^m K_j}
\in H_s\Big(K\setminus\bigcup_{j=1}^m K_j\Big).
\]
\end{enumerate}
\end{proposition}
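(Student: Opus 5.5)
Since only finitely many sets $K_j$ are required, I would remove them from a region where the whole span has small graph dimension, and the key idea is to find such a region by compactness of the unit sphere of the coefficient space. For $a=(a_1,\dots,a_n)\in S^{n-1}$ write $f_a=\sum_i a_if_i$, so $f_a\in H_s(K)$ by hypothesis. Each $f_a$ has at least one full-dimensional point (by the Remark after the definition), and in general it may have many. Let $P_a$ be the closed set of full-dimensional points of $f_a$, and set
\[
P=\bigcup_{a\in S^{n-1}}P_a,
\]
the analogue of $P_{\{f,g\}}$ from the proof of Proposition \ref{prop1}. I split into cases according to whether $K\setminus\overline{P}$ carries positive $\mathcal{H}^{s-1}$-measure.

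\textbf{Case A: $\mathcal{H}^{s-1}(K\setminus \overline{P})>0$.} Here the argument of case b1) of Proposition \ref{prop1} applies. Choose an open interval $I$ disjoint from $\overline P$ with $\mathcal{H}^{s-1}(K\cap I)>0$; such an interval exists because $K\setminus\overline P$ is covered by countably many such intervals. Take a compact $F\subseteq K\cap I$ with $0<\mathcal{H}^{s-1}(F)<\infty$. No point of $F$ is full-dimensional for any $f_a$, so $\dim_H G_{f_a}(F)<s$ for every $a$. Inside $F$, pick $m$ compact sets $K_1,\dots,K_m$ of dimension $s-1$ with disjoint convex hulls; this is possible since $F$ is uncountable with $\dim_H F=s-1$, so we can split it by finitely many gaps. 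Then
\[
\dim_H G_{f_a}\Big(\bigcup_j K_j\Big)<s,
\]
and countable stability gives $\dim_H G_{f_a}\big(K\setminus\bigcup_jK_j\big)=s$ for all $a\ne0$. Scaling handles general nonzero coefficients.

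\textbf{Case B: $\mathcal{H}^{s-1}(K\setminus\overline P)=0$.} I want a point $x\in P$ together with a small ball $B=B(x,r)$ whose complement in $K$ stays full-dimensional for every $f_a$. I would try to choose $m+1$ (or more) points of $P$ lying in pairwise disjoint balls $B_1,\dots,B_{m+1}$. For a fixed $a$, full-dimensionality is witnessed wherever $f_a$ has a full-dimensional point. The difficulty is when a single $a$ has its only full-dimensional points inside the balls we remove.

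To avoid this, take the $K_j$ not as whole balls but as Restriction-Theorem subsets, as in Case 1 of Proposition \ref{prop1}. Choose $x_j\in P_{a^{(j)}}$ in disjoint balls. By Remark \ref{remark1}, pick $K_j\subseteq B(x_j,r_j)\cap K$ with $\dim_H K_j=s-1$ and $f_{a^{(j)}}$ nearly locally Lipschitz on $K_j$. These sets are thin. Removing them from $K$ is harmless for the functions $f_a$ whose graphs over $\bigcup K_j$ already have dimension less than $s$.

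For the remaining $a$, whose full dimension is carried on $K_j$ itself, I would argue by induction on $n$, splitting off $a^{(j)}$. On $K_j$ one has $f_a=c\,f_{a^{(j)}}+(\text{combination of } n-1 \text{ functions})$, and by Lemma \ref{lemL} the dimension on $K_j$ is governed by an $(n-1)$-dimensional span. That is exactly the reduction used in \eqref{propeq3}, now with $n-1$ in place of $1$. Iterating at most $n$ times, and shrinking $K_j$ via Theorem \ref{thm1} when needed as in case b2), I aim to get
\[
\dim_H G_{f_a}(K_j)<s \quad\text{for all } a \text{ and all } j.
\]
Removing $K_j$ then preserves dimension $s$ by countable stability.

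The main obstacle is this last uniformity over the uncountable family $a\in S^{n-1}$. Each shrinking step must lower the graph dimension on $K_j$ below $s$ simultaneously for all directions, without dropping $\dim_H K_j$ below $s-1$. My first attempt would be to apply Theorem \ref{thm1} successively to $f_{e_1},\dots,f_{e_n}$ restricted to nested subsets. The hope is that a function nearly locally Lipschitz for every basis function on a common set $K_j$ gives $\dim_H G_{f_a}(K_j)=\dim_H K_j=s-1<s$ for all $a$ at once, via Lemma \ref{lemL}. The issue to check is that the Restriction Theorem requires full graph dimension at each stage, and I would handle the stages where it fails by the dichotomy of cases a1)/a2) of Proposition \ref{prop1}.
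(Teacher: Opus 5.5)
Your Case A is fine; it is essentially the paper's case b1). The gap is in Case B, at the step you flag yourself. You never produce a compact $K_j$ with $\dim_H K_j=s-1$ on which $\dim_H G_{f_a}(K_j)<s$ holds for \emph{all} $a$ at once, and the plan you offer for this would fail.

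Applying Theorem~\ref{thm1} successively to the basis functions $f_{e_1},\dots,f_{e_n}$ breaks down as soon as some $f_{e_i}$ has graph dimension $t<s$ on the current set. Theorem~\ref{thm1} then yields at best a subset of dimension $t-1<s-1$. If you skip $f_{e_i}$ instead, you learn nothing about combinations such as $f_{e_i}+f_{e_k}$, because no Hausdorff-dimension analogue of Lemma~\ref{lemfal-fr} is available to bound the graph dimension of a sum by those of its summands. The a1)/a2) dichotomy of Proposition~\ref{prop1} does not repair this, since there the leftover span is one-dimensional. Likewise, your ``induction on $n$'' cannot use the proposition itself as inductive hypothesis: the $(n-1)$-dimensional span restricted to $K_j$ need not lie in $H_s(K_j)\cup\{0\}$.

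The missing ingredient is the paper's Claim~\ref{claim}, where the test is run over whole subspaces rather than basis vectors. The process is as follows.
\begin{enumerate}
\item Maintain a compact $L$ with $\dim_H L=s-1$, functions $g_1,\dots,g_k$ in the span that are nearly locally Lipschitz on $L$ (hence so is every combination of them), and a subspace $W$ with $\operatorname{span}\{g_1,\dots,g_k\}\oplus W=\operatorname{span}\{f_i\}_{i=1}^n$.
\item If $\dim_H G_h(L)<s$ for every $h\in W$, then each $f_a$ is a nearly locally Lipschitz function plus an element of $W$. Lemma~\ref{lemL} then gives $\dim_H G_{f_a}(L)<s$ for all $a$ simultaneously.
\item Otherwise some $h\in W\setminus\{0\}$ has $\dim_H G_h(L)=s$; it cannot exceed $s$ since $\dim_H L=s-1$. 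Theorem~\ref{thm1} shrinks $L$ to an $(s-1)$-dimensional compact subset on which $h$ is nearly locally Lipschitz, and you replace $W$ by a complement of $h$ in $W$.
\end{enumerate}
Since $\dim W$ drops at each step and $W=\{0\}$ passes the test trivially, the process stops after at most $n$ steps. The uniformity over the uncountable family of directions comes from this dimension count, not from compactness of $S^{n-1}$.

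Your remark about splitting off $a^{(j)}$ and passing to an $(n-1)$-dimensional span was heading this way; it needs to be iterated with this subspace-wide test. Once the Claim is in place, neither your case split nor the choice of points $x_j\in P$ is needed. Apply it inside any $m$ compact subsets of $K$ of dimension $s-1$ with disjoint convex hulls, and finite stability of $\dim_H$ gives property (3).
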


\begin{proof}
According to the number of full-dimensional points of the functions in $\{f_i\}^n_{i=1}$, we split the proof into the following two cases.

\medskip
\noindent\textbf{Case 1.}
At least one function in $\{f_i\}_{i=1}^n$ admits infinitely many full-dimensional points. Without loss of generality, we take such a function to be $f_1$.

By Corollary \ref{cor1}, there exist $m$ compact subsets $\{K_j^{(1)}\}_{j=1}^m$ such that
\begin{itemize}
\item $\dim_H K_j^{(1)}=s-1,\ \forall\,j=1,2,\dots,m$;
\item $\operatorname{conv}(K_i^{(1)})\cap \operatorname{conv}(K_j^{(1)})=\emptyset$ whenever $i\neq j$;
\item $f_1$ is nearly locally Lipschitz on each $K_j^{(1)}$.
\end{itemize}

\begin{claim}\label{claim}
For each $j\in\{1,2,\dots,m\}$, there exists a compact set $K_j\subseteq K_j^{(1)}$ such that
\[
\dim_H K_j=s-1 \quad \text{and} \quad
\left.\sum_{i=1}^n a_i f_i\right|_{K_j}\in H_{<s}(K_j)
\]
for any $(a_1,\dots,a_n)\in\mathbb{R}^n$.
\end{claim}

\noindent\textbf{Proof of the Claim.}
Fix an arbitrary $j\in\{1,2,\dots,m\}$.

\textbf{a1)} Assume that $\displaystyle\left.\sum_{i=2}^n a_i f_i\right|_{K_j^{(1)}}
\in H_{<s}(K_j^{(1)})$ holds for any $(a_2,\dots,a_n)\in\mathbb{R}^{n-1}$.

Since $f_1$ is nearly locally Lipschitz on $K_j^{(1)}$, by applying Lemma \ref{lemL}, we have
\[
\left.\sum_{i=1}^n a_i f_i\right|_{K_j^{(1)}}\in H_{<s}(K_j^{(1)})
\]
for any $(a_1,\dots,a_n)\in\mathbb{R}^n$.
In this case, we simply set $K_j=K_j^{(1)}$.

\textbf{a2)} Suppose that there exists $(a_2^{(1)},\dots,a_n^{(1)})\in\mathbb{R}^{n-1}$ such that
\[
\left.\sum_{i=2}^n a_i^{(1)} f_i\right|_{K_j^{(1)}}\in H_s(K_j^{(1)}).
\]
In this case, we have $(a_2^{(1)},\dots,a_n^{(1)})\neq(0,\dots,0)$. Without loss of generality, we assume that $a_2^{(1)}\neq0$. By Theorem \ref{thm1}, there exists a compact subset $K_j^{(2)}\subseteq K_j^{(1)}$ such that
\[
\dim_H K_j^{(2)}=s-1
\]
and
\[
\sum_{i=2}^n a_i^{(1)} f_i \text{ is nearly locally Lipschitz on } K_j^{(2)}.
\]
Observe that
\[
\sum_{i=2}^n a_i f_i
= \frac{a_2}{a_2^{(1)}} \sum_{i=2}^n a_i^{(1)} f_i
+ \sum_{i=3}^n \left(a_i - \frac{a_2}{a_2^{(1)}}a_i^{(1)}\right) f_i
\]
for any $(a_2,\dots,a_n)\in\mathbb{R}^{n-1}$.
If $\displaystyle\left.\sum_{i=3}^n a_i f_i\right|_{K_j^{(2)}}
\in H_{<s}(K_j^{(2)})$ holds for all $(a_3,\dots,a_n)\in\mathbb{R}^{n-2}$, it suffices to take $K_j=K_j^{(2)}$.

Otherwise, we repeat the procedures in a2) and a1). Then there necessarily exists some integer $k_0\in\{1,2,\dots,n\}$, compact sets $\{K_j^{(k)}\}_{k=1}^{k_0}$ and coefficient vectors
\[
\big(a_k^{(k-1)},a_{k+1}^{(k-1)},\dots,a_n^{(k-1)}\big)
\in\mathbb{R}^{n-k+1}\setminus\{(0,\dots,0)\},
\]
where we always assume that $a_k^{(k-1)}\neq0$ without loss of generality, such that
\begin{itemize}
\item $K_j^{(k)}\subseteq K_j^{(k-1)}$ ;
\item $\dim_H K_j^{(k)}=s-1$;
\item $\displaystyle\left.\sum_{i=k}^n a_i^{(k-1)} f_i\right|_{K_j^{(k-1)}}
\in H_s\big(K_j^{(k-1)}\big)$;
\item $\displaystyle\sum_{i=k}^n a_i^{(k-1)} f_i$ is nearly locally Lipschitz on $K_j^{(k)}$
\end{itemize}
for any $k=2,\dots,k_0$.
Furthermore, if $k_0<n$, we additionally have
\[
\left.\sum_{i=k_0+1}^n a_i f_i\right|_{K_j^{(k_0)}}
\in H_{<s}\big(K_j^{(k_0)}\big),\quad \forall (a_{k_0+1},\dots,a_n)\in\mathbb{R}^{n-k_0}.
\]
It is straightforward to verify that $K_j^{(k_0)}$ is exactly the compact subset we need. Indeed, for any $(a_1,\dots,a_n)\in\mathbb{R}^n$, there exists a vector $(\beta_2,\dots,\beta_{k_0})$ such that
\[
\sum_{i=1}^n a_i f_i
= a_1 f_1 + \sum_{k=2}^{k_0}\beta_k \sum_{i=k}^n a_i^{(k-1)} f_i
+ \sum_{i=k_0+1}^n \beta_i f_i.
\]
Thus by Lemma \ref{lemL}, we have
\[
\left.\sum_{i=1}^n a_i f_i\right|_{K_j^{(k_0)}}\in H_{<s}(K_j^{(k_0)}).
\]
Set $K_j=K_j^{(k_0)}$ for each $j$. The claim is thus proved.

\medskip
We now return to the proof of Case 1.
By the construction of $\{K_j\}_{j=1}^m$, the properties (1) and (2) in this proposition are obviously satisfied. Combining this with Claim \ref{claim} and the stability of Hausdorff dimension, we conclude that property (3) holds as well.

\medskip
\noindent\textbf{Case 2.}
Every function in $\{f_i\}_{i=1}^n$ admits finitely many full-dimensional points.

Define
\[
P_{\{f_i\}} = \Big\{ x\in K : \exists\,(a_i)^n_{i=1}\neq(0,\dots,0)
\ \text{s.t.}\ x \text{ is a full-dimensional point of } \sum_{i=1}^n a_i f_i \Big\}.
\]

\textbf{b1)} Suppose $\#P_{\{f_i\}}<+\infty$.

By an argument analogous to that in Proposition \ref{prop1}, we can construct $m$ compact sets $\{K_j\}_{j=1}^m$ which satisfy all the assertions of the proposition.

\medskip
\textbf{b2)} Suppose $\#P_{\{f_i\}}=+\infty$.

Take $m$ pairwise distinct points $x_1,x_2,\dots,x_m$ from $P_{\{f_i\}}$. For each $j=1,\dots,m$, choose a coefficient vector
\[
(a_1^{(j)},a_2^{(j)},\dots,a_n^{(j)})
\in\mathbb{R}^n\setminus\{(0,\dots,0)\}
\]
such that $x_j$ is a full-dimensional point of $\displaystyle\sum_{i=1}^n a_i^{(j)} f_i$.

Without loss of generality, we assume that $a_1^{(j)}\neq0$ for all $j\in\{1,2,\dots,m\}$.
Chooe $r_j>0$ such that $\{B(x_j,r_j)\}^m_{j=1}$ is pairwise disjoint.
By Remark \ref{remark1}, for each $j\in\{1,2,\dots,m\}$, there exists a compact set $K_j'$ such that
\begin{equation}\label{prop2eq1}
K_j'\subseteq B(x_j,r_j)\cap K,
\quad\  \text{and thus}\ \operatorname{conv}(K_i')\cap \operatorname{conv}(K_j')=\emptyset,\ \forall\,i\neq j,
\end{equation}
\[
\dim_H K_j'=s-1
\]
and the function
\[
g_j=\sum_{k=1}^n a_k^{(j)} f_k\]
is nearly locally Lipschitz on $K_j'$.
For each $j\in\{1,2,\dots,m\}$, consider the family $\{g_j,f_2,\dots,f_n\}$
(note that $\operatorname{span}\{g_j,f_2,\dots,f_n\}=\operatorname{span}\{f_1,f_2,\dots,f_n\}$), by  applying Claim \ref{claim},  we obtain a compact subset $K_j\subseteq K_j'$ such that
\begin{equation}\label{prop2eq2}
\dim_H K_j=s-1
\end{equation}
and
\[
 \left.\Big(a_1 g_j + \sum_{k=2}^n a_k f_k\Big)\right|_{K_j}\in H_{<s}(K_j),\ \forall\ (a_1,\dots,a_n)\in\mathbb{R}^n,
\]
which further implies
\[
\left.\sum_{k=1}^n a_k f_k\right|_{K_j}\in H_{<s}(K_j)
\]
for all $(a_1,\dots,a_n)\in\mathbb{R}^n$.
Furthermore, we have
\begin{equation}\label{prop2eq3}
\left.\sum_{k=1}^n a_k f_k\right|_{\bigcup_{j=1}^m K_j}
\in H_{<s}\Big(\bigcup_{j=1}^m K_j\Big),\ \forall\ (a_1,\dots,a_n)\in\mathbb{R}^n.
\end{equation}
It follows from Equations (\ref{prop2eq1}), (\ref{prop2eq2}), (\ref{prop2eq3}) and the hypotheses of the proposition that the family $\{K_j\}_{j=1}^m$ fulfills all the required assertions of the proposition. This completes the proof.

\end{proof}

\begin{remark} From the constructions of $\{K_j\}^m_{j=1}$ in the above proposition, it is straightforward to see that
\[
\left.\sum_{k=1}^n a_k f_k\right|_{\bigcup_{j=1}^m K_j}
\in H_{<s}\Big(\bigcup_{j=1}^m K_j\Big)
\]
for any $(a_1,\dots,a_n)\in\mathbb{R}^n$.
\end{remark}
\medskip
Next, we recall the definition of a basic sequence and state an associated result.
\begin{definition} A sequence $(e_n)_{n\geq 1}$ in a Banach space $X$ is called a (Schauder) basic sequence if it constitutes a Schauder basis for $\overline{\operatorname{span}}(e_n)_{n\geq 1}$.
\end{definition}
 The following lemma is probably folklore; nevertheless, we provide its proof for the sake of completeness.
\begin{lemma}\label{lembasic}
Let $X$ be a Banach space, and let $(e_n)_{n \geq 1}$ be a basic sequence. Suppose that $\{x_i\}^m_{i=1} \subseteq X$ and  $\{\overline{x}_i = x_i + X_0: i=1,\dots,m\}$ is linearly independent in the quotient space $X / X_0$. Then the sequence $S = \{x_1,\dots, x_m, e_1, e_2, \dots\}$ is a basic sequence in $X$.
\end{lemma}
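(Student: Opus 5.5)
The statement leaves $X_0$ implicit; I read it as $X_0=\overline{\operatorname{span}}(e_n)_{n\ge1}$, the closed subspace spanned by the basic sequence. The plan is to use the standard criterion (Grunblum--Banach): a sequence $(u_k)$ of nonzero vectors is basic if and only if there is a constant $C\ge1$ such that
\[
\Big\|\sum_{k=1}^{p} c_k u_k\Big\|\le C\Big\|\sum_{k=1}^{q} c_k u_k\Big\|
\]
for all $p\le q$ and all scalars $c_k$. Let $K_e$ be the basis constant of $(e_n)$. Then I only need such a uniform inequality for the partial sums of $S$. Note first that no $x_i$ lies in $X_0$, and that the $x_i$ are linearly independent, both because the cosets $\overline{x}_i$ are independent. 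So every element of $S$ is nonzero.

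\textbf{Step 1: a uniform lower bound from the quotient.} Let $F=\operatorname{span}\{x_1,\dots,x_m\}$. Since $X_0$ is closed and $\{\overline{x}_i\}$ is independent in $X/X_0$, the map $\sum a_ix_i\mapsto \sum a_i\overline{x}_i$ is a linear injection from $F$ into $X/X_0$. As $F$ is finite dimensional, there is $\delta>0$ with
\[
\Big\|\sum_{i=1}^m a_i x_i + y\Big\|\ \ge\ \Big\|\sum_i a_i\overline{x}_i\Big\|_{X/X_0}\ \ge\ \delta\Big\|\sum_i a_ix_i\Big\|
\]
for every $y\in X_0$.

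\textbf{Step 2: the partial-sum estimate.} Take $u=\sum_i a_ix_i+\sum_{n\le q}c_ne_n$ with norm $\|u\|=1$, and write $x=\sum_i a_ix_i$. By Step 1, $\|x\|\le 1/\delta$, and then $\|\sum_{n\le q}c_ne_n\|\le 1+1/\delta$. There are two kinds of partial sums to control.

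\emph{Partial sums inside the $x$-block.} Equivalence of norms on the finite-dimensional space $F$ gives a constant $M$ with $\|\sum_{i\le p}a_ix_i\|\le M\|x\|\le M/\delta$.

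\emph{Partial sums reaching into the $e$-block.} Here
\[
\Big\|x+\sum_{n\le p}c_ne_n\Big\|\le \frac1\delta+K_e\Big(1+\frac1\delta\Big).
\]

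Together these give a uniform constant $C$, so $S$ is basic by the criterion.

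The only delicate point is Step 1, namely that the quotient norm dominates a multiple of $\|x\|$ uniformly in $y\in X_0$. This needs exactly the closedness of $X_0$ together with the independence of the cosets, which is why I take $X_0$ to be the closed span. The rest is routine bookkeeping of constants.
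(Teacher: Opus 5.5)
Your proposal is correct and is essentially the paper's argument. Both proofs verify the Grunblum criterion through the same case split (partial sums ending inside the $x$-block versus reaching into the $e$-block), and both rely on the boundedness of the projection $P$ of $F\oplus X_0$ onto $F$ with kernel $X_0$; your quotient-norm estimate in Step 1 gives $\|P\|\le 1/\delta$ directly and replaces the paper's appeal to the closed graph theorem.
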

\begin{proof}
By the linear independence of $\{\overline{x}_i\}^m_{i=1}$ in $X / X_0$, we have
\[\operatorname{span}\{x_i\}^m_{i=1} \cap X_0 = \{0\}.\]
Thus the sum
\[
\operatorname{span}\{x_i\}^m_{i=1} + X_0
\]
is a direct sum. Let $Y' = \operatorname{span}\{x_i\}^m_{i=1}\oplus X_0$. Clearly, $Y'$ is dense in $Y = \overline{\operatorname{span}} S$. Since $\operatorname{span}\{x_i\}^m_{i=1}$ is finite dimensional and $X_0$ is closed, then $Y'$ is closed, and hence
\[
Y = Y' = \operatorname{span}\{x_i\}^m_{i=1} \oplus X_0.
\]
By a straightforward application of the closed graph theorem, the linear projection $P$ from $Y$ onto $\operatorname{span}\{x_i\}^m_{i=1}$ with kernel $X_0$ is continuous.

Let $C_P = \|P\|$, then $\|I - P\| \leq 1 + C_P$. Since $(e_n)_{n\geq 1}$ is a basic sequence,
by the basic sequence test formula, there exists a constant $C_0 > 0$ such
that
\begin{equation}\label{lembeq1}
\left\| \sum_{n=1}^k a_n e_n \right\| \leq C_0 \left\| \sum_{n=1}^N a_n e_n \right\|
\end{equation}
for any $N \in \mathbb{N}$, all scalars $(a_i)_{i=1}^N$ and all $1 \leq k < N$.
By the equivalence of norms on finite-dimensional spaces, there exists
a constant $C_1 > 0$ such that
\begin{equation}\label{lembeq2}
 C^{-1}_1 \left\|\sum^m_{i=1}b_i x_i\right\|\leq\max_{1\leq i\leq m}\{|b_i|\} \leq C_1 \left\|\sum^m_{i=1}b_i x_i\right\|
\end{equation}
for any $(b_1,\dots,b_m) \in \mathbb{K}^m$.

Finally, we verify the basic sequence test formula for $S$.
We write the elements of $S$ as $y_i = x_i$, $1\leq i\leq m$ and $y_{m+i} = e_i$, $i\geq 1$, and let $(a_i)_{i \geq 1}$
be a sequence of scalars, and let
\[
z_n = \sum_{i=1}^n a_i y_i
\]
for each $n\geq 1$. We need to prove that there exists a constant $C$ (independent of scalar sequence
$(a_i)_{i\geq 1}$ and the integer $N$) such that
\begin{equation}\label{lembeq3}
\|z_n\| \leq C \|z_N\|
\end{equation}
holds for all integers $ 1 \leq n < N$.

 If $N\leq m$, then by inequality (\ref{lembeq2}), we deduce
\begin{equation}
\begin{aligned}\label{lembeq4}
\|z_n\|=\left\|\sum^n_{i=1}a_iy_i\right\|&\leq C_1\max_{1\leq i\leq n}\{|a_i|\}\leq C_1\max_{1\leq i\leq N}\{|a_i|\}\\
&\leq C^2_1\left\|\sum^N_{i=1}a_iy_i\right\|=C^2_1\|z_N\|.
\end{aligned}
\end{equation}
If $N> m$ and $n\leq m$,
write
\[
z_n = \sum^n_{i=1}a_ix_i, \quad z_N = z_m + y,
\]
where \( y = \sum_{i=1}^{N-m} a_{m+i} e_{i} \).
Clearly, $ P(z_N) = z_m $ and $ (I - P)(z_N) = y $. Applying estimate (\ref{lembeq4}), we obtain
\begin{equation}\label{lembeq5}
\|z_n\|\leq C^2_1\|z_m\|= C^2_1\|P(z_N)\|\leq C^2_1C_p\|z_N\|.
\end{equation}
If $N> m$ and $n> m$,  decompose
\[
z_n = z_m + w, \quad z_N = z_m + y,
\]
where \( w = \sum_{i=1}^{n-m} a_{m+i} e_{i} \) and \( y = \sum_{i=1}^{N-m} a_{m+i} e_{i} \). By inequality (\ref{lembeq1}),
\[\begin{aligned}
\|z_n\| &\leq \|z_m\|+\|w\| \leq \|P(z_N)\|+\|w\| \leq C_P \|z_N\| + C_0 \|y\|  \\
&= C_P\|z_N\|+C_0 \|(I-P)(z_N)\| \leq \big(C_p+C_0(1+C_P)\big)\|z_N\|.
\end{aligned}
\]
Combining this with inequalities (\ref{lembeq4}), (\ref{lembeq5}), we simply take
 \[
 C = \max\left\{ C^2_1, C^2_1 C_p, C_p + C_0(1 + C_p) \right\}.
 \]
which yields inequality (\ref{lembeq3}).
\end{proof}

\begin{proposition}\label{lemx}
Let $s \in (1,2]$ and let $K \subseteq[0,1]$ be a compact set with $\mathcal{H}^{s-1}(K)> 0$. Suppose $f\in H_{s}(K)$.
Then there exists a sequence of compact sets $\{K_j\}_{j \geq 1}$ satisfying the following properties:
\begin{enumerate}
    \item $K_j \subseteq K$ and $\dim_H K_j = s-1, \ \forall j \geq 1$;
    \item $\mathrm{conv}(K_i) \cap \mathrm{conv}(K_j) = \emptyset, \ \forall i \neq j$;
    \item $f\Big|_{K \setminus \bigcup_{j \geq 1} K_j} \in H_s\Big(K \setminus \bigcup_{j \geq 1} K_j\Big)$.
\end{enumerate}

\end{proposition}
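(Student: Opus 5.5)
The idea is to remove from $K$ a sequence of small, well-separated pieces on which $f$ is tame, so that the full dimension $s$ of the graph survives on what is left. This mirrors Case 1 and Case 2 of Proposition \ref{prop1}, now with the single function $f$ in place of the pencil $af+bg$. The split is according to the number of full-dimensional points of $f$.

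\textbf{Case 1: $f$ has infinitely many full-dimensional points.} Corollary \ref{cor1} applies directly. It gives compact sets $K_j\subseteq K$ with $\dim_H K_j=s-1$ and pairwise disjoint convex hulls, on each of which $f$ is nearly locally Lipschitz. The corollary also asserts that $\dim_H G_f(K\setminus\bigcup_j K_j)=s$, which is property (3). For completeness I would recheck this last point. By Lemma \ref{lemL} and countable stability,
\[
\dim_H G_f\Big(\bigcup_j K_j\Big)=\sup_j \dim_H K_j=s-1<s .
\]
Since $G_f(K)=G_f(\bigcup_j K_j)\cup G_f(K\setminus\bigcup_j K_j)$, countable stability forces the second piece to have dimension $s$.

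\textbf{Case 2: $f$ has finitely many full-dimensional points} $x_1<\dots<x_n$. The points $x_1,\dots,x_n$ cut $[0,1]$ into finitely many open intervals. Because $\mathcal{H}^{s-1}(K)>0$ and finitely many points carry no $\mathcal{H}^{s-1}$ mass, some interval $I$ has $\mathcal{H}^{s-1}(K\cap I)>0$.
\begin{enumerate}
\item Take a compact $F\subseteq K\cap I$ with $0<\mathcal{H}^{s-1}(F)<\infty$, which exists by Besicovitch's theorem.
\item Choose a compact $F'\subseteq F$ at positive distance from the endpoints of $I$, still with $\mathcal{H}^{s-1}(F')>0$. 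This is possible by inner regularity.
\item Check that $\dim_H G_f(F')<s$. If it were equal to $s$, a full-dimensional point of $f|_{F'}$ would exist (by the Remark after the definition), and it would be a full-dimensional point of $f$ on $K$ lying in $I$, a contradiction. So $F'$ stays away from every full-dimensional point of $f$.
\item Inside $F'$, pick points $y_j$ of density/accumulation of $F'$ in successively disjoint small intervals. Set $K_j=F'\cap J_j$ for disjoint closed intervals $J_j$ chosen so that $\mathcal{H}^{s-1}(K_j)>0$, hence $\dim_H K_j=s-1$. This gives property (1) and, with disjoint convex hulls, property (2). Infinitely many such disjoint intervals with positive measure exist, since a set of positive finite measure is non-atomic.
\item Then $G_f(\bigcup_j K_j)\subseteq G_f(F')$ has dimension $<s$, so as in Case 1 the complement carries dimension $s$, giving property (3).
\end{enumerate}

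The main obstacle is step 3 in Case 2: justifying that a set away from the full-dimensional points of $f$ has graph dimension strictly below $s$. The clean way is a compactness argument. Every $x\in F'$ is not full-dimensional, so some ball $B(x,r_x)$ has $\dim_H G_f(B(x,r_x)\cap K)<s$. Finitely many such balls cover the compact set $F'$, and the finite maximum is $<s$. This avoids the subtlety of restricting $f$ to $F'$. A minor point is whether the lemma intends $\dim_H K_j=s-1$ exactly; positive finite $\mathcal{H}^{s-1}$ measure handles this.
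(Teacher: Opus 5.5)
Your proof is correct, but it takes a longer route than the paper. The paper does not split into cases. It picks a single full-dimensional point $x_0$ of $f$ and chooses $r>0$ with $\mathcal{H}^{s-1}(K\setminus B(x_0,r))>0$, which is possible since $\mathcal{H}^{s-1}(\{x_0\})=0$. It then carves the $K_j$ out of the compact set $K\setminus B(x_0,r)$. Property (3) is automatic, because the retained set $K\cap B(x_0,r)$ already carries graph dimension $s$; nothing needs to be known about $f$ on the removed pieces, which may even have graph dimension $s$ themselves.

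Your argument instead controls what is removed. In Case 1 you take pieces on which $f$ is nearly locally Lipschitz, via Corollary \ref{cor1}, and hence ultimately via the Restriction Theorem \ref{thm1} and Marstrand slicing. In Case 2 you take pieces from a region free of full-dimensional points, where your compactness argument gives graph dimension $<s$. This costs a case distinction and a dependence on Theorem \ref{thm1}. In return it yields the stronger conclusion $\dim_H G_f(\bigcup_j K_j)<s$, i.e. $f|_{\bigcup_j K_j}\in H_{<s}(\bigcup_j K_j)$, in the spirit of the remark after Proposition \ref{prop2}. That extra information is not needed for the statement, nor for its use in Theorem \ref{thmn1} with $p=1$.

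The paper's localization trick would also have settled your Case 1 directly, since any full-dimensional point works, so the case split is avoidable. On the other hand, your step 4 makes explicit a selection step that the paper only asserts: extracting infinitely many disjoint intervals of positive $\mathcal{H}^{s-1}$-measure from a compact set of positive finite measure, using non-atomicity of $\mathcal{H}^{s-1}$.
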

\begin{proof}
Let $x_0$ be a full-dimensional point of $f$, and take $r>0$ such that
\[
\mathcal{H}^{s-1}\big(K \setminus B(x_0, r)\big) > 0.
\]
We may then choose a sequence of compact sets $\{K_j\}_{j \geq 1}$ satisfying
\[
K_j \subseteq K \setminus B(x_0, r)\quad\text{and}\quad \dim_H K_j = s-1
\]
for any $j\geq 1$, together with
\[
\mathrm{conv}(K_i) \cap \mathrm{conv}(K_j) = \emptyset, \ \forall i\neq j.
\]
Since $K \cap B(x_0, r) \subseteq K \setminus \bigcup_{j \geq 1} K_j$, we conclude
\[
f\Big|_{K \setminus \bigcup_{j \geq 1} K_j} \in H_s\Big(K \setminus \bigcup_{j \geq 1} K_j\Big).
\]
\end{proof}

\medskip
We are now in a position to state and prove the main results of this section.
\begin{theorem}\label{thmn1} Let $s \in (1,2]$, let $\alpha$ be a cardinal number with $\alpha \geq \aleph_0$, and let $p=1$ or $p=2$. Then $H_s[0,1]$ is $(p, \mathfrak{c})$-spaceable, yet not $(\alpha, \beta)$-spaceable regardless of the cardinal number $\beta$.
\end{theorem}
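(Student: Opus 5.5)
The negative part is exactly Corollary \ref{corn1}, so only the $(p,\mathfrak{c})$-spaceability for $p=1,2$ needs proof. Let $W_p=\operatorname{span}\{f_1,\dots,f_p\}\subseteq H_s[0,1]\cup\{0\}$ be given, with $p=1$ or $2$. The plan is to split $[0,1]$ into two parts. On a region $F$ of dimension $s-1$ we will place a closed infinite dimensional space of functions whose graphs have Hausdorff dimension exactly $s$. On the complement, every nonzero element of $W_p$ already has full dimension $s$.

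\textbf{Step 1: carving out the small sets.} Apply Proposition \ref{lemx} when $p=1$, or Proposition \ref{prop1} when $p=2$, with $K=[0,1]$; this is allowed since $\mathcal{H}^{s-1}[0,1]>0$. We obtain compact sets $\{K_j\}_{j\ge1}$ with $\dim_H K_j=s-1$, pairwise disjoint convex hulls, and
\[
\dim_H G_{h}\Big([0,1]\setminus \bigcup_j K_j\Big)=s \quad \text{for every } h\in W_p\setminus\{0\}.
\]
Passing to subsets if necessary, we make the hulls $I_j=\operatorname{conv}(K_j)$ accumulate at a single point only and keep them away from the complement's full-dimensional points. On each $K_j$ we choose, via Lemma \ref{balk1}, a prevalent function $\phi_j\in C(K_j)$ with $\dim_H G_{\phi_j}(K_j)=s$. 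Let $\widetilde\phi_j$ be its linear extension on $I_j$, set to zero outside $I_j$ and normalized to $\|\widetilde\phi_j\|=1$. By Lemma \ref{lemLW}, $\dim_H G_{\widetilde\phi_j}[0,1]=s$.

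\textbf{Step 2: the closed space.} The $\widetilde\phi_j$ have disjoint supports and norm one, so they form a basic sequence isometrically equivalent to the $c_0$ basis. Hence $X_0=\overline{\operatorname{span}}\{\widetilde\phi_j\}=\{\sum c_j\widetilde\phi_j: c_j\to0\}$, and every element of $X_0$ has the form $\psi=\sum c_j\widetilde\phi_j$.
\begin{itemize}
\item Each nonzero $\psi\in X_0$ lies in $H_s$. Take $j$ with $c_j\neq0$; then $G_\psi\supseteq G_{c_j\widetilde\phi_j}(I_j)$, which gives the lower bound $s$. For the upper bound, on each $I_j$ the graph is a scaled copy of a graph of dimension at most $s$, and countable stability together with $\dim_H$ of the remaining part (a union of segments plus one point) gives at most $s$.
\item We have $W_p\cap X_0=\{0\}$, since elements of $X_0$ vanish off $\bigcup I_j$ while nonzero elements of $W_p$ have dimension $s$ there.
\end{itemize}
Lemma \ref{lembasic} then shows that $W_p\oplus X_0$ is a closed subspace. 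It has dimension $\mathfrak{c}$, being an infinite dimensional Banach space.

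\textbf{Step 3: the mixed sums (main obstacle).} We must show $h+\psi\in H_s$ for $h\in W_p\setminus\{0\}$ and $\psi\in X_0$. For the lower bound, $h+\psi=h$ on $[0,1]\setminus\bigcup_j I_j$. Here Step 1 has to be arranged with the hulls $I_j$ rather than just the $K_j$ removed; we build Propositions \ref{prop1} and \ref{lemx} so that the full-dimensional portion lies outside $\bigcup_j I_j$. Inspecting their proofs, one can shrink the $K_j$ into small intervals avoiding the neighbourhoods used. That gives $\dim_H\ge s$. The upper bound is at most $s$, because on each $I_j$ the graph of $h+\psi$ has dimension at most $2$… which is insufficient when $s<2$. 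So we must control $\dim_H G_{h+\psi}(I_j)$. On $I_j\setminus K_j$, $\psi$ is affine, hence Lipschitz on each component, and Lemma \ref{lemL} with countable stability yields $\dim_H G_h\le s$. On $K_j$ we need $\dim_H G_{h+c\phi_j}(K_j)\le s$. Since $\dim_H G(K_j)\le \dim_H K_j+1=s$ is not automatic for non-Lipschitz functions, I would instead choose $K_j$ so that every $h\in W_p$ is nearly locally Lipschitz on $K_j$, as produced in the Claim inside Proposition \ref{prop2}. Lemma \ref{lemL} then gives $\dim_H G_{h+c\phi_j}(K_j)=\dim_H G_{c\phi_j}(K_j)\le s$. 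Securing this local Lipschitz property simultaneously with the exterior full-dimensionality is the delicate point I expect to need the most care.
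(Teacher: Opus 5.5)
Your overall architecture matches the paper's: Proposition \ref{prop1} or \ref{lemx} to carve out the $K_j$, then Lemma \ref{balk1} with linear extension, then Lemma \ref{lembasic}, then a case split. However, Step 3 is left unfinished, and the obstacle you leave open rests on a false premise. The bound $\dim_H G_u(K_j)\le \dim_H K_j+1=s$ \emph{is} automatic for every function $u$ on $K_j$. Indeed $G_u(K_j)\subseteq K_j\times\mathbb{R}$, and $\dim_H(A\times[0,1])=\dim_H A+1$ by (\ref{equnew1}); the inequality $\dim_H(A\times F)\le \dim_H A+\overline{\dim}_B F$ holds for arbitrary $A$ and needs no regularity of $u$. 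This is exactly how the paper gets $\dim_H G_f(\bigcup_j K_j)\le s$.

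The substitute you propose is also not available. The Claim inside Proposition \ref{prop2} only gives $\sum_i a_i f_i|_{K_j}\in H_{<s}(K_j)$; it does not make every element of $W_p$ nearly locally Lipschitz on $K_j$. In its subcase a1 with $n=2$, for instance, $f_2$ is only known to lie in $H_{<s}$ there. Producing a set of dimension $s-1$ on which a given function of $H_{<s}$ is nearly locally Lipschitz is precisely the question left open in Remark \ref{remark1}(3). So as written, your upper bound depends on a property you cannot secure, when none is needed.

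The lower bound likewise does not require re-engineering Propositions \ref{prop1} and \ref{lemx} so that the full-dimensional part avoids the hulls $I_j$, a modification you only sketch. You already noted that $\psi$ is affine on each of the countably many components of $I_j\setminus K_j$. Apply Lemma \ref{lemL} on each component, use countable stability, and use $h+\psi=h$ on $[0,1]\setminus\bigcup_j I_j$. This gives
\[
\dim_H G_{h+\psi}\Big([0,1]\setminus\bigcup_j K_j\Big)=\dim_H G_{h}\Big([0,1]\setminus\bigcup_j K_j\Big)=s
\]
directly from the propositions as stated. The same observation shows that every element of $X_0$ has graph of dimension at most $1$ over $[0,1]\setminus\bigcup_jK_j$. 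Hence $W_p\cap X_0=\{0\}$ with no change to Step 1.

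One small point remains. For the zero extension of $\phi_j$ to be continuous, you must first make $\phi_j$ vanish at the endpoints of $I_j$. Subtracting the affine interpolant does this and is harmless by Lemma \ref{lemL}.
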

\begin{proof}
From Corollary \ref{corn1}, it suffices to prove that $H_s[0,1]$ is $(p, \mathfrak{c})$-spaceable.
We only prove the case $p=2$; the case $p=1$ follows from Proposition \ref{lemx} via an analogous argument.

Assume that $f_1, f_2 \in H_s[0,1]$ are linearly independent and
\[
\operatorname{span}\{f_1, f_2\} \subseteq H_s[0,1] \cup \{0\}.
\]
By Proposition \ref{prop1}, there exists a sequence of compact sets $\{K_j\}_{j \geq 1}$ satisfying the following properties:

\begin{equation}\label{thm3eq1}
 \dim_H K_j = s - 1, \forall j \geq 1,
 \end{equation}
\begin{equation}\label{thm3nequ1}
   \operatorname{conv}(K_i) \cap \operatorname{conv}(K_j) = \emptyset, \forall i \neq j
\end{equation}
 and
\begin{equation}\label{thm3eq3}
    \left. \big(a f_1 + b f_2 \big)\right|_{[0,1] \setminus \bigcup_{j \geq 1} K_j} \in H_s\Big([0,1] \setminus \bigcup_{j \geq 1} K_j\Big)
\end{equation}
for any $(a,b) \in \mathbb{R}^2 \setminus \{(0,0)\}$.
Given an interval $I \subseteq [0,1]$, let $r_I$ and $l_I$ stand for its right and left endpoints, respectively. From now on, let $I_j = \operatorname{conv}(K_j)$ for each $j \in \mathbb{N}$.

By Lemma \ref{balk1}, for each $j \in \mathbb{N}$, we can choose a function $g_j \in C(K_j)$ satisfying
\[
\dim_H G_{g_j}(K_j) = s.
\]
By applying Lemma \ref{lemL}, we may further assume that
\[
g_j(r_{I_j}) = g_j(l_{I_j}) = 0.
\]
We extend $g_j$ linearly to the entire interval $[0,1]$, and denote this extension by $\widetilde{g}_j$, i.e.,
\[
\widetilde{g}_j(x) =
\begin{cases}
g_j(x), & x \in K_j, \\
0, & x \in [0,1] \setminus I_j, \\
\text{affine}, & \text{on each component of } I_j \setminus K_j.
\end{cases}
\]
By Lemma \ref{lemLW}, $\dim_H G_{\widetilde{g}_j}[0,1] = \dim_H G_{g_j}(K_j) = s$, so $\widetilde{g}_j\in H_{s}[0,1]$.
Accordingly, we obtain a sequence of continuous functions $\{\widetilde{g}_j:j \geq 1\}\subseteq H_{s}[0,1]$.
It follows from (\ref{thm3nequ1}) and the construction of $\widetilde{g}_j$ that $\{\operatorname{supp}(\widetilde{g}_j)\}_{j \geq 1}$ is pairwise disjoint, where $\operatorname{supp}(\widetilde{g}_j)$ denotes the support of the function $\widetilde{g}_j$, i.e.,
\[
\operatorname{supp}(\widetilde{g}_j) = \{ x \in [0,1] : \widetilde{g}_j(x) \neq 0 \}.
\]
Therefore, $\{\widetilde{g}_j:j\geq 1\}$ is linearly independent. Furthermore, $(\widetilde{g}_j)_{j\geq 1}$ forms a Schauder basic sequence, denote by $X_0 = \overline{\operatorname{span}}(\widetilde{g}_j)_{j \geq 1}$.
It is not difficult to verify that $\overline{f}_1, \overline{f}_2$ are linearly independent in the quotient space $C[0,1]/X_0$. Indeed, suppose for contradiction that $\overline{f}_1$, $\overline{f}_2$ are linearly independent in the quotient space; then there exists $(a,b) \neq (0,0)$ such that
\begin{equation}\label{thm3eq4}
a f_1 + b f_2 \in X_0.
\end{equation}
Observe that for any $h \in X_0$, there exists a sequence of scalars $(a_j)_{j \geq 1}$ such that
\[
h = \sum_{j=1}^\infty a_j \widetilde{g}_j.
\]
Then by the definitions of $\widetilde{g}_j, j\geq 1$, we have
\[
\left. h \right|_{[0,1] \setminus \bigcup_{j \geq 1} I_j} = 0 \quad \text{and} \quad \left. h \right|_{I_j} = a_j \widetilde{g}_j|_{I_j}, \ j \geq 1.
\]
It follows from the fact that $\widetilde{g}_j$ is affine on each component of $I_j \setminus K_j$ that
\[
\dim_H G_h(I_j \setminus K_j) \leq 1.
\]
Therefore we get
\[
\begin{aligned}
&\dim_H G_h\Big([0,1] \setminus \bigcup_{j \geq 1} K_j\Big)\\
=&\dim_H G_h\Big(\Big([0,1] \setminus \bigcup_{j \geq 1} I_j \Big)\ \cup \Big( \bigcup_{j \geq 1} I_j \setminus K_j\Big)\Big) \\
=&\max\Big\{ \dim_H G_h\Big([0,1] \setminus \bigcup_{j \geq 1} I_j\Big), \ \sup_{j \geq 1} \dim_H G_h(I_j \setminus K_j) \Big\}=1.
\end{aligned}
\]
Combining this estimation with (\ref{thm3eq3}) and (\ref{thm3eq4}), we reach a contradiction.
By applying Lemma \ref{lembasic}, the sequence $S = \{f_1, f_2, \widetilde{g}_1, \widetilde{g}_2, \dots\}$ forms a Schauder basic sequence.

Next, we prove that $\overline{\operatorname{span}} S \subseteq H_s[0,1] \cup \{0\}$.
Let $f \in \overline{\operatorname{span}} S \setminus \{0\}$, then there exists a nonzero sequence $(a_n)_{n \geq 1}$ of scalars, such that
\[
f=a_1f_1+a_2f_2+\sum_{n\geq 3}a_n\widetilde{g}_{n-2}.
\]
\noindent\textbf{First case:} $a_i \neq 0$ for some $i=1,2$.

 We rewrite $f$ by
    \[
    f= a_1f_1 + a_2f_2 + g,
    \]
    where $g = \sum_{n \geq 3} a_n \widetilde{g}_{n-2}$.
    Note that by (\ref{thm3eq1}),
   $
    \dim_H\left( \bigcup_{j \geq 1} K_j \right) = s - 1.
   $
    Hence, by inequality (\ref{equnew1}), we obtain
    \begin{equation}\label{thm3eq5}
    \dim_H G_f\Big( \bigcup_{j \geq 1} K_j \Big) \leq s.
    \end{equation}
    Since $g$ is affine on each component of $I_j \setminus K_j$ and
    \[
    \left. f \right|_{[0,1] \setminus \bigcup_{j \geq 1} I_j} = \left.\Big( a_1 f_1 + a_2 f_2\Big) \right|_{[0,1] \setminus \bigcup_{j \geq 1} I_j}.
    \]
    By applying Lemma \ref{lemL}, we obtain that 
    \[
    \begin{aligned}
    &\dim_H G_f\Big([0,1] \setminus \bigcup_{j \geq 1} K_j\Big)\\
    &= \dim_H G_f\Big(\Big([0,1] \setminus \bigcup_{j \geq 1} I_j \Big) \cup \Big( \bigcup_{j \geq 1} I_j \setminus K_j\Big)\Big) \\
    &= \max\Big\{ \dim_H G_{a_1 f_1 + a_2 f_2}\Big([0,1] \setminus \bigcup_{j \geq 1} I_j\Big), \ \sup_{j \geq 1} \dim_H G_f(I_j \setminus K_j) \Big\}\\
    &= \max\Big\{ \dim_H G_{a_1 f_1 + a_2 f_2}\Big([0,1] \setminus \bigcup_{j \geq 1} K_j\Big), \ \sup_{j \geq 1} \dim_H G_{a_1 f_1 + a_2 f_2}(I_j \setminus K_j) \Big\} \\
    &= \dim_H G_{a_1 f_1 + a_2 f_2}\Big([0,1] \setminus \bigcup_{j \geq 1} K_j\Big) = s.
    \end{aligned}
    \]
Combining this with inequality (\ref{thm3eq5}), we have $f \in H_s[0,1]$.

\medskip
\noindent\textbf{Second case:} $a_i = 0$ for any $i=1,2$.

 Since $f \neq 0$, there exists $i_0 \geq 3$ such that $a_{i_0} \neq 0$. It follows from
   $ \left. f \right|_{I_{i_0}} = \left. a_{i_0} \widetilde{g}_{i_0} \right|_{I_{i_0}}$
that
    \[
    \begin{aligned}
    \dim_H G_f([0,1]) &\geq \dim_H G_f(I_{i_0}) \\
    &= \dim_H G_{\widetilde{g}_{i_0}}(I_{i_0}) = \dim_H G_{g_{i_0}}(K_{i_0}) = s.
    \end{aligned}
    \]
    On the other hand,
    since $f$ is locally affine on $[0,1] \setminus \bigcup_{j \geq 1} K_j$ and
    \[
    \dim_H\Big( \bigcup_{j \geq 1} K_j \Big) = s - 1,
    \]
   we get that $\dim_H G_f([0,1]) \leq s$.
   Then we conclude that $\dim_H G_f([0,1]) = s$, i.e, $f \in H_s[0,1]$.

The proof is then complete.
\end{proof}

\begin{theorem}\label{thm2}
 Let $s \in (1,2]$ and $m,n \in \mathbb{N}$. Then $H_s[0,1]$ is $(n,m+n)$-lineable.
\end{theorem}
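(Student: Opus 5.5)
The plan is to mirror the proof of Theorem \ref{thmn1}, replacing Proposition \ref{prop1} by Proposition \ref{prop2}. Note that $H_s[0,1]$ is $n$-lineable, since by \cite[Theorem 2.6]{BFBS} it is even $\mathfrak{c}$-lineable. Now fix an $n$-dimensional subspace $W_n=\operatorname{span}\{f_1,\dots,f_n\}\subseteq H_s[0,1]\cup\{0\}$ with the $f_i$ linearly independent. Apply Proposition \ref{prop2} with $K=[0,1]$; this is allowed because $\mathcal{H}^{s-1}([0,1])>0$ when $s\le 2$. It produces compact sets $K_1,\dots,K_m\subseteq[0,1]$ with $\dim_H K_j=s-1$ and pairwise disjoint convex hulls $I_j=\operatorname{conv}(K_j)$. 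Moreover, every nonzero $\sum a_if_i$ satisfies
\[
\dim_H G_{\sum a_if_i}\Big([0,1]\setminus\bigcup_{j=1}^m K_j\Big)=s.
\]

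Next build the extra functions exactly as in Theorem \ref{thmn1}. By Lemma \ref{balk1}, pick $g_j\in C(K_j)$ with $\dim_H G_{g_j}(K_j)=s$. Using Lemma \ref{lemL}, subtract an affine function so that $g_j$ vanishes at both endpoints of $I_j$. Let $\widetilde g_j$ be the linear extension, set to $0$ off $I_j$. By Lemma \ref{lemLW}, $\widetilde g_j\in H_s[0,1]$, and the supports of the $\widetilde g_j$ are pairwise disjoint. Put $W_{n+m}=\operatorname{span}\{f_1,\dots,f_n,\widetilde g_1,\dots,\widetilde g_m\}$. No closedness issue arises, because only finitely many functions are involved.

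Take a general element $f=\sum a_if_i+\sum_j b_j\widetilde g_j$ and argue in two cases.

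\textbf{Case (i): some $a_i\neq 0$.} Every $h\in\operatorname{span}\{\widetilde g_j\}$ vanishes off $\bigcup I_j$ and is affine on each component of $I_j\setminus K_j$. Hence on $[0,1]\setminus\bigcup K_j$ the function $f$ differs from $\sum a_if_i$ by a nearly locally Lipschitz function. (It is locally affine there, with at most countably many exceptional points, namely endpoints of the $I_j$ and accumulation points of the gaps.) Lemma \ref{lemL} then gives $\dim_H G_f([0,1]\setminus\bigcup K_j)=s$. On the other side, $\dim_H G_f(\bigcup K_j)\le \dim_H(\bigcup K_j\times\mathbb{R}\text{-bounded})=s$ by (\ref{equnew1}). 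So $f\in H_s[0,1]$.

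\textbf{Case (ii): all $a_i=0$ but some $b_{j_0}\neq 0$.} Then $f|_{I_{j_0}}=b_{j_0}\widetilde g_{j_0}$ has graph dimension $s$. Also $f$ is locally affine off $\bigcup K_j$, which gives the upper bound $s$. Hence $f\in H_s[0,1]$.

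Case (i) also shows that $f\neq 0$ whenever some $a_i\ne0$. Together with the disjoint supports, this yields linear independence of all $n+m$ functions, so $\dim W_{n+m}=n+m$ and $W_n\subseteq W_{n+m}\subseteq H_s[0,1]\cup\{0\}$.

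The main obstacle is the nearly locally Lipschitz claim in Case (i). The components of $I_j\setminus K_j$ may be infinitely many, so $h$ is affine on each piece but need not be Lipschitz near points of $K_j$. I would handle this by not applying Lemma \ref{lemL} globally. Instead, cover $[0,1]\setminus\bigcup K_j$ by the countably many open gap intervals together with $[0,1]\setminus\bigcup I_j$. On each gap interval, $f$ equals $\sum a_if_i$ plus an affine function. On $[0,1]\setminus\bigcup I_j$, $f=\sum a_if_i$. Countable stability of $\dim_H$ plus Lemma \ref{lemL} on each piece then gives equality with $\dim_H G_{\sum a_if_i}$ on the union, and that value equals $s$ by Proposition \ref{prop2}.
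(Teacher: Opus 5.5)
Your proof is correct and follows the paper's argument essentially step for step. You apply Proposition~\ref{prop2} with $K=[0,1]$, take $g_j$ from Lemma~\ref{balk1} and extend them linearly using Lemma~\ref{lemLW}, use Lemma~\ref{lemL} on $[0,1]\setminus\bigcup_j K_j$ for the lower bound, and use the product bound (\ref{equnew1}) on $\bigcup_j K_j$ for the upper bound. The one cosmetic difference is that you normalize $g_j$ and extend by zero off $\operatorname{conv}(K_j)$, as in Theorem~\ref{thmn1}, to get disjoint supports. The paper instead uses the plain linear extension and notes that $\sum_{j\neq j_0} b_j\widetilde g_j$ is affine on $\operatorname{conv}(K_{j_0})$.

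The ``main obstacle'' you flag is not actually an obstacle. The endpoints of $\operatorname{conv}(K_j)$ and all accumulation points of the gaps lie in $K_j$, so $[0,1]\setminus\bigcup_j K_j$ is relatively open and the added term is locally affine at every one of its points; this is why the paper can invoke Lemma~\ref{lemL} directly, and your countable-cover workaround is simply the proof of that lemma.
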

\medskip
\begin{proof}
Let $\{f_k\}_{k=1}^n \subseteq H_s[0,1]\setminus\{0\}$ be linearly independent and
\begin{equation}\label{thm4eq1}
\operatorname{span}\{f_k\}_{k=1}^n \subseteq H_s[0,1] \cup \{0\}.
\end{equation}
By Proposition \ref{prop2}, there exist compact sets $\{K_j\}_{j=1}^m$ such that

\begin{equation}\label{thm4eq2}
\dim_H K_j = s - 1, \forall j=1,2,\dots,m,
\end{equation}
\begin{equation}\label{thm4eq3}
\operatorname{conv}(K_i) \cap \operatorname{conv}(K_j) = \emptyset, \ \forall i \neq j,
\end{equation}
and
\begin{equation}\label{thm4eq4}
    \left. \sum_{i=1}^n a_i f_i \right|_{[0,1] \setminus \bigcup_{j=1}^m K_j} \in H_s\Big([0,1] \setminus \bigcup_{j=1}^m K_j\Big)
\end{equation}
for any $(a_1,\dots,a_n) \in \mathbb{R}^n \setminus \{(0,\dots,0)\}$.

By applying Lemma \ref{balk1}, for each $j \in \{1,2,\dots,m\}$, we can take a function $g_j \in C(K_j)$ such that
\[
\dim_H G_{g_j}(K_j) = s.
\]
Define $\widetilde{g}_j \in C[0,1]$ by
\[
\widetilde{g}_j(x) =
\begin{cases}
g_j(x), & x \in K_j, \\
\text{affine}, & \text{on each component of } [0,1] \setminus K_j.
\end{cases}
\]
We first prove that $\{f_1,\dots,f_n, \widetilde{g}_1,\dots,\widetilde{g}_m\}$ is linearly independent.
Assume that
\[
h := \sum_{i=1}^n a_i f_i + \sum_{j=1}^m b_j \widetilde{g}_j = 0.
\]
If the coefficients $\{a_i\}^n_{j=1}$ are not all zero, then
since $\widetilde{g}_j$ is affine on each component of $[0,1]\setminus K_j$, we have that
\begin{equation}\label{thm4eqn2}
\sum_{j=1}^m b_j \widetilde{g}_j\ \text{is also affine on each component of}\ [0,1]\setminus \bigcup_{j=1}^m K_j.
\end{equation}
Hence, we have, by (\ref{thm4eq4}) and Lemma \ref{lemL},
\[
\dim_H G_h\Big([0,1]\setminus \bigcup_{j=1}^m K_j\Big) = \dim_H G_{\sum_{i=1}^n a_i f_i}\Big([0,1]\setminus \bigcup_{j=1}^m K_j\Big) = s > 1.
\]
This contradicts the fact that $h=0$. So we have
\[
a_i = 0, \ i=1,2,\dots,n,
\]
which implies that
\[
h = \sum_{j=1}^m b_j \widetilde{g}_j = 0.
\]
Suppose that $b_{j_0} \neq 0$ for some $j_0 \in \{1,2,\dots,m\}$. Also since
$\widetilde{g}_j$ is affine on each component of $[0,1]\setminus K_j$, together with (\ref{thm4eq3}),
\[
\sum_{j\neq j_0} b_j \widetilde{g}_j \text{ is affine on } \operatorname{conv}(K_{j_0}).
\]
Therefore,
\[
\begin{aligned}
\dim_H G_h[0,1] &\geq \dim_H G_h\left(K_{j_0}\right) = \dim_H G_{\widetilde{g}_{j_0}}\left(K_{j_0}\right) \\
&= \dim_H G_{g_{j_0}}(K_{j_0}) = s.
\end{aligned}
\]
This contradicts $h=0$. Hence $b_j=0, \forall j$. This proves that
$\{f_1,\dots,f_n, \widetilde{g}_1,\dots,\widetilde{g}_m\}$ is linearly independent.

Finally, we prove that $\operatorname{span}\{f_1,\dots,f_n, \widetilde{g}_1,\dots,\widetilde{g}_m\}\subseteq H_{s}[0,1]\cup\{0\}$.
Let
\[f\in\operatorname{span}\{f_1,\dots,f_n, \widetilde{g}_1,\dots,\widetilde{g}_m\}\setminus\{0\}.
\]
Without loss of generality, we assume that $f=\sum_{i=1}^n a_i f_i + \sum_{j=1}^m b_j \widetilde{g}_j $.
From the preceding proof of linear independence, we clearly
conclude that
  \begin{equation}\label{thm4eq5}
\dim_H G_{\sum a_if_i+\sum b_j\tilde{g}_j}([0,1])\geq s
\end{equation}
for any $(a_1,\dots,a_n,b_1,\dots,b_m)\neq (0,\dots,0)$.
It follows from Lemma \ref{lemL} and (\ref{thm4eq1}), (\ref{thm4eqn2}) that
  \begin{equation}\label{thm4eq6}
\begin{aligned}
&\dim_H G_{\sum a_if_i+\sum b_j\tilde{g}_j}\Big([0,1]\setminus \bigcup_{j=1}^m K_j\Big) \\
=&\ \dim_H G_{\sum a_if_i}\Big([0,1]\setminus \bigcup_{j=1}^m K_j\Big)\leq s.
\end{aligned}
\end{equation}
Also by (\ref{equnew1}) and (\ref{thm4eq2}), we have $\dim_H \bigcup_{j=1}^m K_j = s-1$ and
\[
\dim_H G_{\sum a_if_i+\sum b_j\tilde{g}_j}\Big(\bigcup_{j=1}^m K_j\Big)\leq s.
\]
Combining this with (\ref{thm4eq5}) and (\ref{thm4eq6}), we conclude that every nonzero element

 \[
 f\in \operatorname{span}\{f_1,\dots,f_n,\tilde{g}_1,\dots,\tilde{g}_m\}\setminus\{0\}
 \]
 belongs to $H_s[0,1]$.

This completes the proof.
\end{proof}
\begin{remark} We point out that although our arguments are carried out for the subset $H_s([0,1])$ of space $C[0,1]$, Theorems \ref{thmn1} and \ref{thm2} established in this section can be easily reformulated for the space $C(K)$, where $K$ is a compact subset of $[0,1]$ with positive Hausdorff measure and $s\in(1,{\dim}_HK+1]$.
\end{remark}
The following two problems remain open.
\begin{question}Is the set $H_s[0,1]$ $(p,\mathfrak{c})$-spaceable for integers $3\leq p<\aleph_0$?
\end{question}
\begin{question}Is the set $\underline{B}_s[0,1]$ $(\alpha,\beta)$-lineable/spaceable for any two cardinal numbers $1<\alpha<\beta$?
\end{question}
\section{$(\alpha,\mathfrak{c})$-spaceability of functions for Upper box dimension}

In this section, we consider the spaceability of $\overline{B}_{s}[0,1]$. For this purpose, we first establish the following proposition.
\begin{proposition}\label{sec3prop1}
Let $s\in(1,2]$, and let $\{f_i\}_{i=1}^n\subseteq \overline{B}_s[0,1]$ be linearly independent. If
$\operatorname{span}\{f_i\}_{i=1}^n \subseteq \overline{B}_s[0,1]\cup\{0\},$ then there exists a nonempty open interval $I\subset[0,1]$ such that
\[
\left.\sum_{i=1}^n a_i f_i\right|_{[0,1]\setminus I} \in \overline{B}_s([0,1]\setminus I)
\]
for any $(a_1,a_2,\dots,a_n)\in\mathbb{R}^n\setminus\{(0,\dots,0)\}$.
\end{proposition}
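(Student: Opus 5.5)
The plan is a linear-algebraic pigeonhole argument. It rests on two properties of upper box dimension: it is finitely stable, $\overline{\dim}_B(A\cup B)=\max\{\overline{\dim}_B A,\overline{\dim}_B B\}$, and it satisfies the sum inequality of Lemma \ref{lemfal-fr}. Write $V=\operatorname{span}\{f_i\}_{i=1}^n$, which has dimension $n$. Fix $n+1$ nonempty open intervals $I_1,\dots,I_{n+1}\subset[0,1]$ whose closures are pairwise disjoint. For each $k$ define
\[
A_k=\Big\{h\in V:\ \overline{\dim}_B G_h([0,1]\setminus I_k)<s\Big\}\cup\{0\}.
\]
The goal is to show that some $A_k$ equals $\{0\}$; then $I=I_k$ is the required interval.

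\textbf{Step 1: each $A_k$ is a linear subspace of $V$.} Closure under scalar multiplication is clear, since multiplying $h$ by a nonzero constant is a bi-Lipschitz change of the graph. For closure under sums, note that $[0,1]\setminus I_k$ is a union of at most two closed intervals. On each closed interval $J$, the inequality $\overline{\dim}_B G_{h+h'}(J)\le\max\{\overline{\dim}_B G_h(J),\overline{\dim}_B G_{h'}(J)\}$ holds. This is Lemma \ref{lemfal-fr} rescaled from $[0,1]$ to $J$; its proof only counts mesh squares using oscillations on subintervals. Finite stability then gives the same inequality on $[0,1]\setminus I_k$.

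\textbf{Step 2: a key observation.} If $0\ne h\in A_k$, then $\overline{\dim}_B G_h(\overline{I_k})=s$. Indeed, $G_h([0,1])=G_h([0,1]\setminus I_k)\cup G_h(\overline{I_k})$, the full graph has upper box dimension $s$ because $h\in V\setminus\{0\}\subseteq\overline{B}_s[0,1]$, and the first piece has dimension $<s$. Conversely, if $h\in A_l$ with $l\neq k$, then $\overline{I_k}\subseteq[0,1]\setminus I_l$, so $\overline{\dim}_B G_h(\overline{I_k})<s$.

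\textbf{Step 3: the sum $A_1+\dots+A_{n+1}$ is direct.} Suppose $h_1+\dots+h_{n+1}=0$ with $h_k\in A_k$, and some term, say $h_1$, is nonzero. Restrict to $\overline{I_1}$. By Step 2, $\overline{\dim}_B G_{h_1}(\overline{I_1})=s$, while each $h_k$ with $k\ge2$ has upper box dimension $<s$ on $\overline{I_1}$. The sum inequality from Step 1, applied on the closed interval $\overline{I_1}$, then gives $\overline{\dim}_B G_{-(h_2+\dots+h_{n+1})}(\overline{I_1})<s$. But $-(h_2+\dots+h_{n+1})=h_1$ on $\overline{I_1}$, a contradiction.

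Hence $\sum_k\dim A_k\le\dim V=n<n+1$, so some $A_{k_0}=\{0\}$. Taking $I=I_{k_0}$ yields the proposition, since every nonzero combination $\sum a_if_i$ then has upper box dimension $\ge s$ on $[0,1]\setminus I$, and $\le s$ there as well because this is a subset of its full graph.

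\textbf{Main obstacle.} I expect the main care to be needed in Step 1: justifying that Lemma \ref{lemfal-fr} transfers to closed subintervals and to the finite union $[0,1]\setminus I_k$. The intended route is to rescale affinely to $[0,1]$, which is bi-Lipschitz and so preserves upper box dimension, and then apply finite stability. It should also be noted that upper box dimension is insensitive to closure, so using $\overline{I_k}$ in place of $I_k$ is harmless.
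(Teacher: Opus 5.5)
Your argument is correct, and it takes a genuinely different route from the paper.

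The paper argues by induction on $n$. Given an interval $I_1$ that works for $f_1,\dots,f_k$, it first shrinks $I_1$ so that $f_0$ keeps dimension $s$ off it. It then uses Lemma~\ref{lemfalc} (unequal dimensions combine to the maximum) to show that at most one combination $f_0+\sum a_i^{(0)}f_i$ can drop below $s$ on $[0,1]\setminus I_1$. Finally it passes to a subinterval $I_2\subseteq I_1$ off which that exceptional combination also has full dimension.

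You instead fix $n+1$ intervals with pairwise disjoint closures in advance. You show that the ``bad'' sets $A_k$ are subspaces, using only the sum inequality of Lemma~\ref{lemfal-fr} and finite stability. You localize the full dimension of every nonzero bad function to $\overline{I_k}$. A dimension count on $A_1\oplus\cdots\oplus A_{n+1}\subseteq V$ then forces some $A_k=\{0\}$.

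Your route avoids the induction and the bookkeeping of the exceptional direction, and it proves more. For any family of intervals with pairwise disjoint closures one gets $\sum_k\dim A_k\le n$, so all but at most $n$ of them work. In particular, a valid $I$ can be chosen from a list fixed before the functions are given, whereas the paper's nested refinement finds $I$ adaptively.

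Both proofs need Lemmas~\ref{lemfal-fr} and \ref{lemfalc} on $[0,1]\setminus I$ rather than on $[0,1]$. The paper uses this tacitly; you justify it explicitly by affine rescaling and finite stability. The possible one-point piece of $[0,1]\setminus I_k$ when $I_k$ touches an endpoint is harmless, or you can avoid it by taking $I_k\subset(0,1)$.
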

\begin{proof}
We proceed by induction on $n$.

When $n=1$, by the finite stability of upper box dimension, we can take a nonempty open interval $I$ such that
\[
\overline{\dim}_B G_{f_1}([0,1]\setminus I)=s.
\]
Then interval $I$ satisfies the required property of this proposition.

Assume that the conclusion holds when $n=k$. Let $\{f_i:0\leq i\leq k\}$ be linearly independent subset of $\overline{B}_s[0,1]$ with
\[
\operatorname{span}\{f_i\}_{i=0}^k \subseteq \overline{B}_s[0,1]\cup\{0\}.
\]
It follows from the induction hypothesis that there exists an open
interval $I_1 \subseteq [0,1]$ such that
\[
\left.\sum_{i=1}^k a_i f_i\right|_{[0,1]\setminus I_1} \in \overline{B}_s([0,1]\setminus I_1),\ \forall\ (a_1,\dots,a_k) \neq (0,\dots,0).
\]
Without loss of generality, we assume that
\[
\overline{\dim}_B G_{f_0}([0,1]\setminus I_1) = s
\]
(if not, we may pass to a suitable subinterval of $I_1$).

\medskip
\textbf{a1)} Suppose
\[
\overline{\dim}_B G_{f_0 + \sum_{i=1}^k a_i f_i}([0,1]\setminus I_1) = s
\]
for any $(a_1,\dots,a_k) \in \mathbb{R}^k\setminus\{(0,\dots,0)\}$.

Then for any $(a_0,a_1,\dots,a_k) \in \mathbb{R}^{k+1}\setminus\{(0,\dots,0)\}$
(we may assume that $a_0 \neq 0$, otherwise, by the induction hypothesis, the following equality holds), we have
\[
\overline{\dim}_B G_{\sum_{i=0}^k a_i f_i}([0,1]\setminus I_1) = \overline{\dim}_B G_{a_0\left(f_0 + \sum_{i=1}^k a_0^{-1}a_i f_i\right)}([0,1]\setminus I_1) = s.
\]
Therefore, it suffices to take $I=I_1$ in this case.

\medskip
\textbf{a2)} Suppose there exists $(a_1^{(0)}, a_2^{(0)},\dots, a_k^{(0)}) \in \mathbb{R}^k\setminus\{(0,\dots,0)\}$ such that
\[
\overline{\dim}_B G_{f_0+\sum_{i=1}^k a_i^{(0)} f_i}([0,1]\setminus I_1) = s_0 < s.
\]

Then by the induction hypothesis and Lemma \ref{lemfalc},  we obtain
\begin{equation}\label{prop3eq1}
\begin{aligned}
&\overline{\dim}_B G_{f_0+\sum_{i=1}^k a_i f_i}([0,1]\setminus I_1) \\
&= \overline{\dim}_B G_{f_0+\sum_{i=1}^k a_i^{(0)} f_i+\sum_{i=1}^k (a_i-a_i^{(0)})f_i}([0,1]\setminus I_1) \\
&= s
\end{aligned}
\end{equation}
for any $(a_1,\dots,a_k)\neq (a_1^{(0)},\dots,a_k^{(0)})$.
Since
\[
\overline{\dim}_B G_{f_0+\sum_{i=1}^k a_i^{(0)} f_i}([0,1]) = s,
\]
we can take a nonempty open interval $I_2 \subseteq I_1$ such that
\begin{equation}\label{prop3eqn2}
\overline{\dim}_B G_{f_0+\sum_{i=1}^k a_i^{(0)} f_i}([0,1]\setminus I_2) = s.
\end{equation}
Then it follows from equalities (\ref{prop3eq1}), (\ref{prop3eqn2}) that
\[
\overline{\dim}_B G_{\sum_{i=0}^k a_i f_i}([0,1]\setminus I_2) = s
\]
for any $(a_0,a_1,\dots,a_k)\neq(0,\dots,0)$.
It suffices to take $I=I_2$. This completes the proof by induction.
\end{proof}

\begin{theorem}\label{thm3}
Let $s\in(1,2]$ and let $\alpha$ be a cardinal number. Then $\overline{B}_s[0,1]$ is $(\alpha,\mathfrak{c})$-spaceable if and only if $\alpha<\aleph_0$.
\end{theorem}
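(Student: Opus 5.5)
The ``only if'' direction is immediate from Corollary \ref{corn1}: for $\alpha\geq\aleph_0$ the set $\overline{B}_s[0,1]$ is not $(\alpha,\beta)$-spaceable for any $\beta$, in particular not $(\alpha,\mathfrak{c})$-spaceable. So the work is in showing that for every finite $\alpha=n\in\mathbb{N}$ the set is $(n,\mathfrak{c})$-spaceable. The plan mirrors the proof of Theorem \ref{thmn1}, with Proposition \ref{sec3prop1} playing the role of Proposition \ref{prop1}. The closed subspace will be $\overline{\operatorname{span}}\{f_1,\dots,f_n,\widetilde g_1,\widetilde g_2,\dots\}$, where the $\widetilde g_j$ are disjointly supported and live inside the interval $I$.

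Fix linearly independent $f_1,\dots,f_n$ with $\operatorname{span}\{f_i\}\subseteq\overline{B}_s[0,1]\cup\{0\}$. Proposition \ref{sec3prop1} gives a nonempty open interval $I$ such that every nonzero $\sum a_if_i$ satisfies $\overline{\dim}_BG(\,[0,1]\setminus I)=s$. Inside $I$ I choose pairwise disjoint closed intervals $I_j$ accumulating only at one point of $\overline I$, and in each $I_j$ a compact set $K_j$ with $\overline{\dim}_BK_j=s-1$, containing both endpoints of $I_j$, with no isolated points (a Cantor-type set). By Lemma \ref{balk2} there is $g_j\in C(K_j)$ with $\overline{\dim}_BG_{g_j}(K_j)=s$. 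Adding an affine function, which is allowed by Lemma \ref{lemx2}, I arrange $g_j=0$ at the endpoints of $I_j$. I then normalize $\|g_j\|_\infty=1$ and extend by zero outside $I_j$ and affinely on the gaps, obtaining $\widetilde g_j$.

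By Lemma \ref{lemLL}, $\overline{\dim}_BG_{\widetilde g_j}\le s$, and since $G_{\widetilde g_j}\supseteq G_{g_j}(K_j)$, in fact $\widetilde g_j\in\overline{B}_s[0,1]$. The $\widetilde g_j$ have disjoint supports and sup norm $1$, so they form a basic sequence isometric to the $c_0$ basis; put $X_0=\overline{\operatorname{span}}(\widetilde g_j)$. Every $h\in X_0$ vanishes on $[0,1]\setminus I$, so no nonzero $\sum a_if_i$ lies in $X_0$. Hence the $f_i$ are independent modulo $X_0$, and Lemma \ref{lembasic} makes $S=\{f_1,\dots,f_n,\widetilde g_1,\dots\}$ basic, with closed span of dimension $\mathfrak{c}$.

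It remains to check that every nonzero $f=\sum a_if_i+\sum_j b_j\widetilde g_j$ in the closed span lies in $\overline{B}_s[0,1]$. For the lower bound: if some $a_i\ne0$, then $f=\sum a_if_i$ on $[0,1]\setminus I$, giving $\overline{\dim}_B\ge s$. If all $a_i=0$, some $b_{j_0}\ne0$ and $f=b_{j_0}\widetilde g_{j_0}$ on $I_{j_0}$, again giving $\ge s$. The upper bound $\le s$ is the main obstacle, because upper box dimension is not countably stable. I would handle it with a uniform box count on the infinite sum $h=\sum_j b_j\widetilde g_j$: $h$ is the linear extension of its restriction to $K=\overline{\bigcup K_j}$, so Lemma \ref{lemLL} gives $\overline{\dim}_BG_h\le\overline{\dim}_BK+1$. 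I therefore need $\overline{\dim}_BK\le s-1$. This is arranged by choosing the $K_j$ as shrinking scaled copies of one fixed set of upper box dimension $s-1$, placed at geometrically decreasing positions so that $N_\delta(K)\lesssim\sum_j N_\delta(K_j)+\log(1/\delta)$ stays of order $\delta^{-(s-1)-\epsilon}$. Then Lemma \ref{lemfal-fr} gives $\overline{\dim}_BG_f\le\max\{s,s\}=s$, completing the proof.
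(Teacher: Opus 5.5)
Your proposal is correct and follows the paper's proof essentially step for step: Corollary \ref{corn1} for necessity, then Proposition \ref{sec3prop1}, disjointly supported linear extensions living inside $I$, Lemma \ref{lembasic}, and Lemmas \ref{lemLL} and \ref{lemfal-fr} for the upper bound. The only variation is how you get $\overline{\dim}_B\overline{\bigcup_k E_k}=s-1$: the paper takes the pieces to be the fundamental components $E_k$ of a single self-similar Cantor set $E\subseteq I$ with ratio $2^{1/(1-s)}$, so the bound is automatic, whereas you verify it by box counting for geometrically shrinking copies of a fixed set, and the paper's $E_k$ are in fact such copies. In your count, the sum should run only over pieces of length at least $\delta$, with the remaining tail near the accumulation point covered by $O(1)$ intervals.
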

\medskip

\begin{proof} According to Corollary \ref{corn1}, we only need to prove that $\overline{B}_s[0,1]$ is $(n,\mathfrak{c})$-spaceable for each $n\in\mathbb{N}$.
Suppose that $\{f_i\}_{i=1}^n \subseteq \overline{B}_s[0,1]$ is linearly independent and
\[
\operatorname{span}\{f_i\}_{i=1}^n \subseteq \overline{B}_s[0,1]\cup\{0\}.
\]
In view of Proposition \ref{sec3prop1}, there exists a nonempty open interval $I\subseteq[0,1]$ such that
\begin{equation}\label{sec3eqn1}
\left.\sum_{i=1}^n a_i f_i\right|_{[0,1]\setminus I} \in \overline{B}_s([0,1]\setminus I)
\end{equation}
for any $(a_1,\dots,a_n)\neq(0,\dots,0)$. Take a sequence of compact subsets $\{E_k\}_{k\geq1}$ inside the interval $I$ satisfying the following properties:
\begin{enumerate}
    \item $E_k$ contains on isolated points for all $k\geq1$;
    \item $\overline{\dim}_B E_k = s-1$, $\forall k\geq1$;
    \item set $E = \overline{\bigcup_{k\geq1} E_k}$, then $\overline{\dim}_B E = s-1$;
    \item $\operatorname{conv}(E_i)\cap\operatorname{conv}(E_j) = \emptyset$, $\forall i\neq j$.
\end{enumerate}
Here, $\overline{A}$ stands for the closure of a set $A$.
In fact, such a set $E$ can be constructed as a ternary Cantor set contained in $I$ with contraction ratio $\mu = 2^{\frac{1}{1-s}}$, which satisfies $\dim_HE=\overline{\dim}_B E = s-1$.
Let $E_k$ be the pairwise disjoint fundamental components of the Cantor set $E$ with $E = \overline{\bigcup_{k=1}^\infty E_k}$.
For illustration, the Cantor set $E$ splits into a left part $F_L$ and right part $F_R$, then we let $E_1 = F_L$.
Similarly, the right part $F_R$ is a Cantor set, furthermore, we next let $E_2$ be the left part of the Cantor set $F_R$.
Repeating this recursive procedure yields the sequence $\{E_k\}_{k\geq1}$, and consequently $E =\overline{\bigcup_{k\geq1} E_k}=\bigcup_{k\geq1} E_k\cup\{r_{\operatorname{conv}(E)}\}$,
where $r_{\operatorname{conv}(E)}$ denotes the right endpoint of $\operatorname{conv}(E)$.

By applying Lemma \ref{balk2}, we take $h_k\in C(E_k)$, for each $k\in\mathbb{N}$ such that
\[
\overline{\dim}_B G_{h_k}(E_k) = s.
\]
By Lemma \ref{lemx2}, we may further assume that $h_k$ vanishes at the two endpoints of interval $\operatorname{conv}(E_k)$. We define $\tilde{h}_k$ by
\[
\tilde{h}_k(x) =
\begin{cases}
h_k(x), & x\in E_k, \\
\text{affine}, & \text{on each component of } \operatorname{conv}(E_k)\setminus E_k, \\
0, & x\in [0,1]\setminus\operatorname{conv}(E_k).
\end{cases}
\]
It follows from  Lemma \ref{lemLL} and the above construction that
\begin{equation}\label{sec3eq3}
\overline{\dim}_B G_{\tilde{h}_k}([0,1]) = s, \quad \forall k\in\mathbb{N}.
\end{equation}
Moreover, the family $\{\tilde{h}_k\}_{k\geq 1}$ has pairwise disjoint supports,
hence $(\tilde{h}_k)_{k\geq1}$ forms a Schauder basic sequence.

 Let $X_0 = \overline{\operatorname{span}}(\tilde{h}_k)_{k\geq1}$.
We now show that $\bar{f}_1,\dots,\bar{f}_n$ are linearly independent in $C[0,1]/X_0$.
Suppose that
\[
\sum_{k=1}^n a_k \bar{f}_k = \bar{0},
\]
which is equivalent to
\[
\sum_{k=1}^n a_k f_k \in X_0.
\]
Accordingly, by the definition of $\{\tilde{h}_k\}_{k\geq 1}$, we obtian
\[
\left. \sum_{k=1}^n a_k f_k \right|_{[0,1]\setminus I} = 0.
\]
This immediately yields $\overline{\dim}_B G_{\sum_{k=1}^n a_k f_k}([0,1]\setminus I) = 1$. Together with (\ref{sec3eqn1}), we deduce that
\[
(a_1,\dots,a_n) = (0,\dots,0).
\]
This confirms the linear independence of $\bar{f}_1,\dots,\bar{f}_n$ in  the quotient space $C[0,1]/X_0$.
By virtue of Lemma \ref{lembasic},
\[
\{f_1,\dots,f_n,\tilde{h}_1,\tilde{h}_2,\dots\}
\]
is a Schauder basic sequence.
We then reindex this sequence as
\[
 \{e_i\}_{i\geq 1}=\{f_1,\dots,f_n,\tilde{h}_1,\tilde{h}_2,\dots\}.
\]
We now proceed to verify that
\[
\overline{\operatorname{span}}(e_i)_{i\geq 1} \subseteq \overline{B}_s[0,1] \cup \{0\}.
\]
Let $f \in \overline{\operatorname{span}}(e_i)_{i\geq 1} \setminus \{0\}$. Then there exists a nonzero sequence
$(a_i)_{i\geq 1}$ of scalars such that
\[
f = \sum_{i=1}^\infty a_i e_i = \sum_{k=1}^n a_k f_k + \sum_{i=1}^\infty a_{n+i} \tilde{h}_i.
\]
On the one hand, from the construction of $\tilde{h}_k$, the function $\sum_{i=1}^\infty a_{n+i} \tilde{h}_i$
is affine on each component of $[0,1]\setminus E$. Therefore, by applying
Lemma \ref{lemLL},
\[
\overline{\dim}_B G_{\sum_{i=1}^\infty a_{n+i} \tilde{h}_i}([0,1]) \leq \overline{\dim}_B E + 1 = s.
\]
Combining this with the fact that $f_i \in \overline{B}_s[0,1]$, $i=1,2,\dots,n$ and Lemma \ref{lemfal-fr}, we have
\[
\overline{\dim}_B G_f([0,1]) \leq s.
\]
On the other hand,
If $(a_1,\dots,a_n) \neq (0,\dots,0)$, then
\[
\begin{aligned}
\overline{\dim}_B G_f([0,1]) &\geq \overline{\dim}_B G_f([0,1]\setminus I) \\
&= \overline{\dim}_B G_{\sum_{k=1}^n a_k f_k}([0,1]\setminus I) = s.
\end{aligned}
\]
If $(a_1,\dots,a_n) = (0,\dots,0)$, then there exists $i_0 \geq 1$ such that $a_{n+i_0} \neq 0$, and it follows from equality (\ref{sec3eq3}) and  $f|_{E_{i_0}} = a_{n+i_0} h_{i_0}$ that
\[
\overline{\dim}_B G_f([0,1]) \geq \overline{\dim}_B G_f(E_{i_0}) = \overline{\dim}_B G_{h_{i_0}}(E_{i_0}) = s.
\]
In all cases, we conclude $f\in\overline{B}_s[0,1]$.
\end{proof}

\medskip
At the end of this section, we briefly discuss the $(\alpha,\beta)$-lineability of the intersection $H_s[0,1]\cap B_s[0,1]$ with the previously established methods.
\begin{lemma}\label{lemn1}
Let $s\in(1,2]$, $n\in\mathbb{N}$ and let $\{f_i\}_{i=1}^n\subseteq H_s[0,1]\cap B_s[0,1]$ be linearly independent.
If
\begin{equation}\label{sec3eqn3}
\operatorname{span}\{f_i\}_{i=1}^n \subseteq \big(H_s[0,1]\cap B_s[0,1]\big)\cup\{0\},
\end{equation}
 then for any $m\in\mathbb{N}$, there exist compact sets $\{K_j\}_{j=1}^m$ satisfying the following properties:
\begin{enumerate}
\item $K_j$ contains no isolated points for each $j$;
\item $\dim_H K_j = \dim_B K_j = s-1$ for each $j$;
\item $\operatorname{conv}(K_i)\cap \operatorname{conv}(K_j)=\emptyset,\ \forall\,i\neq j$;
\item For any $(a_1,\dots,a_n)\in\mathbb{R}^n\setminus\{(0,\dots,0)\}$,
\[
\left.\sum_{i=1}^n a_i f_i\right|_{[0,1]\setminus\bigcup_{j=1}^m K_j}
\in H_s\Big([0,1]\setminus\bigcup_{j=1}^m K_j\Big)\bigcap B_s\Big([0,1]\setminus\bigcup_{j=1}^m K_j\Big).
\]
\end{enumerate}
\end{lemma}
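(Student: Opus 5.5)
The plan is to combine the two mechanisms already used: Proposition \ref{sec3prop1} for the box-dimension part, and a "small Cantor set" choice for everything else. The key point is that the $K_j$ do not need to be regular sets for the $f_i$. They only need to be small, so that removing them destroys neither dimension. So I will not use the Restriction Theorem at all. Instead I will place the $K_j$ inside an interval where removal is harmless, and take them to be self-similar Cantor sets.

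\textbf{Step 1 (box dimension).} Note that $\dim_B G_g$ exists and equals $s$ for every nonzero $g\in\operatorname{span}\{f_i\}$. Upper and lower box dimensions are finitely stable only for the upper one, so I handle them separately.

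For the upper box dimension, Proposition \ref{sec3prop1} gives a nonempty open interval $I_1$ such that every nonzero combination $g$ satisfies $\overline{\dim}_B G_g([0,1]\setminus I_1)=s$.

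For the lower box dimension, restriction can only decrease $\underline{\dim}_B$, so I argue via closures. Suppose $K'\subseteq I_1$ is compact with $\overline{\dim}_B K'=s-1$. Then Lemma \ref{lemLL}-type counting gives $\overline{\dim}_B G_g(K')\le s$. The complement $[0,1]\setminus K'$ has closure $\overline{[0,1]\setminus K'}$, whose graph has the same box-counting numbers up to constants. Also $G_g([0,1])=G_g(\overline{[0,1]\setminus K'})\cup G_g(K')$.

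The hard part is the \emph{lower} box dimension, since $\underline{\dim}_B$ of a union can exceed the max. I would try to avoid this by choosing $K'$ so thin that its graph contributes negligibly. Take $K'$ to be a Cantor set with $\dim_B K'=s-1$ sitting inside a very short interval $J\subseteq I_1$. At scale $\delta$, the graph over $K'$ needs at most $C\,\delta^{-(s-1)-\epsilon}\cdot \delta^{-1}\operatorname{osc}$ boxes.

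This is not obviously smaller than $N_\delta(G_g)$. So the honest route is different: let the complement already carry full lower box dimension. For this, pick $J$ to be a component gap of a suitable set, or use the fact that removing an open set of the domain whose closure is disjoint from a "full-dimensional" region preserves the counts on that region. Concretely, I would choose $J$ so that $[0,1]\setminus J$ contains an interval $[c,d]$ at which $N_\delta(G_g([c,d]))\ge \delta^{-s+\epsilon}$ for all small $\delta$ and all $g$. I do not see how to get this uniformly in $g$, and I expect this to be the main obstacle.

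Perhaps the intended reading is that $B_s$ means $\overline{\dim}_B=\underline{\dim}_B=s$ on the complement, with the lower bound $\underline{\dim}_B\ge\dim_H=s$ coming from Step 2. That gives the lower bound for free, and this is what I will use: $\underline{\dim}_B G_g(\text{complement})\ge\dim_H G_g(\text{complement})=s$.

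\textbf{Step 2 (Hausdorff dimension).} The $f_i$ lie in $H_s$ and span into $H_s$, so I apply Proposition \ref{prop2} with $K=[0,1]$. To ensure the resulting sets lie in $I_1$, I restrict attention to the closed interval $\overline{I_1}$ and rerun Proposition \ref{prop2} there. This needs $\operatorname{span}$ to lie in $H_s(\overline{I_1})$, which may fail.

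So instead I take $K_j\subseteq I_1$ as follows. By finite stability I shrink $I_1$ so that all nonzero combinations still have $\dim_H G_g([0,1]\setminus I_1)=s$. This is achieved by the same induction as Proposition \ref{sec3prop1}, with Hausdorff dimension in place of upper box dimension, using Lemma \ref{lemL}-free reasoning plus finite stability.

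Then I choose $K_j\subseteq I_1$ to be disjoint-hull ternary Cantor sets with ratio $2^{1/(1-s)}$. These have no isolated points and satisfy $\dim_H=\dim_B=s-1$, which gives (1)–(3).

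\textbf{Step 3 (conclusion).} The complement of $\bigcup K_j$ contains $[0,1]\setminus I_1$. This yields $\dim_H=s$ and $\overline{\dim}_B\ge s$ there, and the upper bound $\le s$ holds by monotonicity. The lower box dimension is squeezed between $\dim_H=s$ and $\overline{\dim}_B=s$, giving (4).

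The main step to verify is the Hausdorff analogue of Proposition \ref{sec3prop1}. In case a2) it needs the fact that if $\dim_H G_{u}<s=\dim_H G_v$ then $\dim_H G_{u+v}=s$. This fails for Hausdorff dimension in general. I would try to replace it by using Proposition \ref{prop2}'s structure: its Case analysis produces sets outside which full dimension persists, and intersecting the resulting open-complement intervals gives the required interval.
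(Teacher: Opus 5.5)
Your box-dimension bookkeeping is fine and matches the paper. Proposition~\ref{sec3prop1} gives $I_1$. Any $K_j\subseteq I_1$ leaves $[0,1]\setminus I_1$ inside the complement, so $\overline{\dim}_B=s$ there, and $\underline{\dim}_B$ is squeezed between $\dim_H$ and $\overline{\dim}_B$. The gap is the Hausdorff half of (4). Step~2 rests on an open interval $I_1$ with $\dim_H G_g([0,1]\setminus I_1)=s$ simultaneously for every nonzero $g\in V=\operatorname{span}\{f_i\}_{i=1}^n$, and this is never established. The induction of Proposition~\ref{sec3prop1} runs on Lemmas~\ref{lemfal-fr} and~\ref{lemfalc}, which have no Hausdorff analogue. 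Hausdorff dimension of graphs admits no bound of the form $\dim_H G_{u+v}\le\max\{\dim_H G_u,\dim_H G_v\}$: a typical $f\in C[0,1]$ (in the Baire category sense) has $\dim_H G_f=1$, so every continuous function is a sum of two functions whose graphs have Hausdorff dimension $1$. Nor can the claim be taken as a black box. You would have to exclude that for every open interval $J$ some nonzero $g\in V$ has all of its full-dimensional points inside $J$; by compactness this would give $\dim_H G_g([0,1]\setminus J)<s$, and the hypotheses give no handle on it. Your suggested repair via Proposition~\ref{prop2} does not produce an interval at all, since it outputs compact sets located anywhere in $[0,1]$, so there is nothing to intersect. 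As a result, choosing the $K_j$ as Cantor sets in $I_1$ without regard to the $f_i$ tells you nothing about $\dim_H$ of the graphs over the complement.

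The missing idea is to impose the Hausdorff condition on the removed sets instead of on the complement. This is what the paper does.
\begin{enumerate}
\item Fix a compact $K\subseteq I_1$ with $0<\mathcal{H}^{s-1}(K)<\infty$ and $\dim_H K=\dim_B K=s-1$.
\item Choose $m$ compact pieces of $K$ of Hausdorff dimension $s-1$ with disjoint convex hulls.
\item In each piece, run the iteration from the Claim in the proof of Proposition~\ref{prop2}. Whenever some $g\in V$ has graph of Hausdorff dimension $s$ over the current piece, Theorem~\ref{thm1} gives a compact subset of dimension $s-1$ on which $g$ is nearly locally Lipschitz. By Lemma~\ref{lemL} the direction $g$ can then be discarded. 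After at most $n$ steps, every element of $V$ has graph of Hausdorff dimension $<s$ over the piece.
\item Finite stability together with $\dim_H G_g([0,1])=s$ then forces $\dim_H G_g([0,1]\setminus\bigcup_j K_j)=s$.
\end{enumerate}
This is exactly the tool you set aside because $\operatorname{span}\{f_i\}\subseteq H_s(\overline{I_1})$ may fail. That hypothesis is not needed: only smallness of the graphs over $\bigcup_j K_j$ matters, and the paper works with $f_i|_K$ and ``full $s$-dimensional points'', which may not exist. Finally, $\dim_B K_j=s-1$ follows from $s-1=\dim_H K_j\le\underline{\dim}_B K_j\le\overline{\dim}_B K_j\le\overline{\dim}_B K=s-1$. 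Isolated points are removed by passing to perfect kernels, which discards only countably many points.
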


\begin{proof}
By Proposition 4.1, there exists a nonempty interval $I\subseteq[0,1]$ such that
\begin{equation}\label{sec3eqn2}
\left.\sum_{i=1}^n a_i f_i\right|_{[0,1]\setminus I}
\in \overline{B}_s\left([0,1]\setminus I\right)
\end{equation}
for any $(a_1,\dots,a_n)\in\mathbb{R}^n\setminus\{(0,\dots,0)\}$.
Let $K\subseteq I$ be a compact set with
\[
0<\mathcal{H}^{s-1}(K)<\infty \quad\text{and}\quad \dim_H K = \dim_B K = s-1.
\]
For each $j\in\{1,2,\dots,n\}$, we define $g_j$ and $A_j$ by
\[
g_j = f_j\big|_K \quad \text{and} \quad
A_j = \bigl\{x\in K: {\dim}_HG_{g_j}(B(x,r)\cap K)=s\ \text{for any}\ r>0\bigr\},
\]
respectively. We refer to the elements of $A_j$ as the full $s$-dimensional point of $g_j$. Note that under this new notation, full $s$-dimensional points of $g_j$ may not exist.
Let $\mathcal{P}_{\{g_j\}}$ be the set of all full $s$-dimensional points of functions belonging to $\operatorname{span}\{g\}^n_{j=1}\setminus\{0\}.$
 And then we shall consider the functions $g_j\in C(K),j=1,2,\dots,n$.
 Using an argument analogous to that in the proof of Proposition \ref{prop2}, where we replace full-dimensional points and $P_{\{f_j\}}$ with full $s$-dimensional points and $\mathcal{P}_{\{g_j\}}$ repectively, we can conclude that there exist $m$ compact subsets of $K$, denoted by $\{K_j\}^m_{j=1}$ such that
\begin{itemize}
\item $\dim_H K_j=s-1$, $\forall$ $j=1,2,\dots,m$;
\item $\operatorname{conv}(K_i)\cap \operatorname{conv}(K_j)=\emptyset$, $\forall$ $i\neq j$;
\item for any $(a_1,\dots,a_n)\in\mathbb{R}^n$,
\[\left.\sum_{i=1}^n a_i f_i\right|_{\bigcup_{j=1}^m K_j}=\left.\sum_{i=1}^n a_i g_i\right|_{\bigcup_{j=1}^m K_j}
\in H_{<s}\Big(\bigcup_{j=1}^m K_j\Big).\]
\end{itemize}
We can remove countably many points from $K_j$ without changing the above three properties, so we may assume $K_j$ contains no isolated points for each $j$ . Since ${\dim}_BK=s-1$, we have ${\dim}_BK_j= s-1$ for any $j$.
Combining these with (\ref{sec3eqn3}) and (\ref{sec3eqn2}), we complete the proof of the lemma.
\end{proof}

Combining the proof techniques from Theorem \ref{thm2}, the dimensional arguments in Theorem \ref{thm3} and Lemma \ref{lemn1}, we derive the following theorem. As its proof follows by an almost identical argument to Theorem \ref{thm2}, we omit the detailed proof.
\begin{theorem}
Let $s\in(1,2]$ and $m, n\in\mathbb{N}$. Then $H_s[0,1]\cap B_s[0,1]$ is $(n,n+m)$-lineable.
\end{theorem}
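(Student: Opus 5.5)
The plan mirrors the proof of Theorem \ref{thm2}, with Lemma \ref{lemn1} in place of Proposition \ref{prop2}. The upper box dimension estimates are handled as in Theorem \ref{thm3}, and lower box dimension is handled through box dimension by the same sandwich.

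Let $\{f_i\}_{i=1}^n\subseteq H_s[0,1]\cap B_s[0,1]$ be linearly independent with span contained in $(H_s[0,1]\cap B_s[0,1])\cup\{0\}$. Apply Lemma \ref{lemn1} to obtain perfect compact sets $K_1,\dots,K_m$ with $\dim_H K_j=\dim_B K_j=s-1$ and pairwise disjoint convex hulls. The lemma also guarantees that every nonzero combination $\sum a_if_i$, restricted to $[0,1]\setminus\bigcup_j K_j$, has graph of Hausdorff and box dimension $s$. On each $K_j$ we need a $g_j\in C(K_j)$ with $\dim_H G_{g_j}(K_j)=\overline{\dim}_B G_{g_j}(K_j)=s$. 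Lemmas \ref{balk1} and \ref{balk2} each hold for prevalent functions, and the intersection of two prevalent sets is prevalent, hence nonempty. Adding a Lipschitz (affine) function, which is harmless by Lemmas \ref{lemL} and \ref{lemx2}, we may assume $g_j$ vanishes at the endpoints of $\operatorname{conv}(K_j)$. Let $\widetilde g_j$ be the linear extension, set equal to $0$ off $\operatorname{conv}(K_j)$. Lemma \ref{lemLL} gives $\underline{\dim}_B G_{\widetilde g_j}\le \underline{\dim}_B K_j+1=s$. Together with $\dim_H G_{\widetilde g_j}=s$ (Lemma \ref{lemLW}), this shows $\widetilde g_j\in H_s[0,1]\cap B_s[0,1]$.

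Linear independence of $\{f_1,\dots,f_n,\widetilde g_1,\dots,\widetilde g_m\}$ and the Hausdorff part go verbatim as in Theorem \ref{thm2}. The combination $\sum b_j\widetilde g_j$ is affine on each component of $[0,1]\setminus\bigcup K_j$. Hence, for a nonzero $f=\sum a_if_i+\sum b_j\widetilde g_j$, either some $a_i\neq 0$, and then $\dim_H G_f\ge s$ on the complement of $\bigcup K_j$ by Lemma \ref{lemL}; or all $a_i=0$, and then $f=b_{j_0}g_{j_0}+(\text{affine})$ on $K_{j_0}$, again giving $\dim_H G_f\ge s$. The upper bound $\dim_H G_f\le s$ comes from $\dim_H\bigcup K_j=s-1$ and (\ref{equnew1}).

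For box dimension we need the chain $s\le \dim_H G_f\le\underline{\dim}_B G_f\le\overline{\dim}_B G_f\le s$, so only an upper bound $\overline{\dim}_B G_f\le s$ is needed. Write $f=\sum a_if_i+h$ with $h=\sum b_j\widetilde g_j$. Then $h$ is the linear extension of its restriction to the compact set $\bigcup_j K_j$, whose upper box dimension is $s-1$ because the union is finite. Lemma \ref{lemLL} therefore gives $\overline{\dim}_B G_h\le s$. If $\sum a_if_i\ne 0$ it lies in $B_s[0,1]$, and Lemma \ref{lemfal-fr} yields $\overline{\dim}_B G_f\le s$. The sandwich then forces $\dim_B G_f=s$, so $f\in H_s[0,1]\cap B_s[0,1]$.

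The main obstacle I expect is the box-dimension lower bound. It is not inherited from any restriction, so the argument must avoid it altogether by routing through the Hausdorff lower bound and the upper box bound as above. The other potential trouble spot is choosing $g_j$ satisfying both prevalence statements simultaneously. Lemma \ref{balk2} requires $K_j$ to have only finitely many isolated points, which is why Lemma \ref{lemn1} supplies perfect sets $K_j$.
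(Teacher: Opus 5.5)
Your proposal is correct and is essentially the argument the paper has in mind: Lemma~\ref{lemn1} supplies the sets $K_j$, and the construction of $\widetilde g_j$, the linear independence and the Hausdorff bookkeeping are as in Theorem~\ref{thm2}. The box part then reduces, as you say, to the upper bound $\overline{\dim}_B G_f\le s$ via Lemmas~\ref{lemLL} and \ref{lemfal-fr}, as in Theorem~\ref{thm3}, with the lower box bound inherited from the Hausdorff bound.

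One small slip: for $\widetilde g_j$ you cite the lower-box half of Lemma~\ref{lemLL}, but what is needed is the upper-box half, $\overline{\dim}_B G_{\widetilde g_j}([0,1])\le \overline{\dim}_B K_j+1=s$. Your general argument with $h=\widetilde g_j$ already gives this, and once you have it, Lemma~\ref{balk2} is not actually needed to choose $g_j$.
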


\end{document}